\DeclareFontFamily{OT1}{pzc}{}
\DeclareFontShape{OT1}{pzc}{m}{it}%
              {<-> s * pzcmi8t}{}
\DeclareMathAlphabet{\mathpzc}{OT1}{pzc}%
                                {m}{it}
\newtheorem{Thm}{Theorem}[section]
\newtheorem{Lem}[Thm]{Lemma}
\newtheorem{Prop}[Thm]{Proposition}
\newtheorem{Claim}[Thm]{Claim}
\newtheorem{Rmk}[Thm]{Remark}
\newcommand{\R}{\mathbb{R}}
\newcommand{\eps}{\varepsilon}
\newcommand{\weak}{\rightharpoonup}
\newcommand{\h}{{\cal H}}
\newcommand{\C}{{\cal C}}
\newcommand{\cN}{{\cal N}}
\newcommand{\D}{{\cal D}}
\newcommand{\cS}{{\cal I}_\phi}
\newcommand{\cSt}{{\cal I}_\psi}
\newcommand{\lin}{{\cal A}}
\newcommand{\rr}{{\cal R}}
\newcommand{\rs}{{\cal S}}
\newcommand{\tpy}{(t\phi+y)}
\newcommand{\tpsy}{(t\psi+y)}
\newcommand{\twzy}{(tw_*+y)}
\newcommand{\twyz}{(t_*w_*+y_*)}
\newcommand{\A}{a}
\newcommand{\uu}{u}
\newcommand{\y}{y}
\newcommand{\ttt}{t_\dagger}
\newcommand{\cc}{c}
\newcommand{\rrro}{\mathpzc{r}_{\,1}}
\newcommand{\rrrt}{\mathpzc{r}_{\,2}}
\newcommand{\rrr}{\mathpzc{r}}
\newcommand{\w}{w}
\newcommand{\tu}{\tilde{u}}
\newcommand{\tH}{\tilde{H}}
\newcommand{\tG}{\tilde{G}}
\newcommand{\tg}{\tilde{g}}
\newcommand{\tua}{\tilde{u}(a)}
\newcommand{\ufs}{u_{\varsigma}}
\newcommand{\afs}{a_{\varsigma}}
\newcommand{\cfs}{c_{\varsigma}}
\newcommand{\cfsup}{c^{\flat\sharp}}
\newcommand{\yfs}{y_{\varsigma}}
\newcommand{\wfs}{\zeta_{\varsigma}}
\renewcommand{\tau}{t_*}
\begin{document}
\begin{center}
{\Large\bf 
Bifurcation curves 
of a 
 logistic equation
when the linear growth rate crosses
a second eigenvalue\footnote{2010
Mathematics Subject Classification: 
35B32,  
35J66,  
37B30,  
92D25. 
\\
\indent Keywords: Bifurcation theory, Morse indices, logistic equation, critical points at infinity, degenerate solutions.
}}\\
\ \\
Pedro Martins Girão\footnote{Email: pgirao@math.ist.utl.pt. 
Partially supported by the Fundação para a Ciência e a Tecnologia (Portugal) and by project
UTAustin/MAT/0035/2008.
} 
\\

\vspace{2.2mm}

{\small Instituto Superior Técnico, Av.\ Rovisco Pais,
1049-001 Lisbon, Portugal}

\end{center}

\begin{center}
{\bf Abstract}
\end{center}
We construct the global bifurcation curves, solutions versus level of harvesting,
for the steady states of a diffusive logistic equation on a bounded domain,
under Dirichlet boundary conditions and other appropriate hypotheses,
when $a$, the linear growth rate of the population, is below $\lambda_2+\delta$. 
Here $\lambda_2$ is the second eigenvalue of the Dirichlet Laplacian on the domain and $\delta>0$.
Such curves have been obtained before, but only for $a$ in a right neighborhood of the first eigenvalue. 
Our analysis provides the exact number of solutions of the equation
for $a\leq\lambda_2$ and new information on the number of solutions for $a>\lambda_2$.

\noindent

\section{Introduction}

Diffusive logistic equations with harvesting 
are equations of the form
\begin{equation}\label{a}
-\Delta u=au-f(u)-ch
\end{equation}
in a domain $\Omega$, with some boundary conditions, here taken to be homogeneous Dirichlet,
for a competition term $f$ and a harvesting function $h$ and level $c$.
In \cite{CDT}, \cite{Eu} and the references therein the reader may find recent work regarding this subject.

In \cite{OSS1} the authors obtained global
bifurcation diagrams for solutions of~(\ref{a})
when the competition term
is proportional to the square of the population~$u$.
The original motivation for our study was  
somewhat limited. 
We wanted to be able to deal with
other competition terms whose
second derivative vanishes at zero.
Building on the work in~\cite{OSS1}, we achieve our original goal and, more importantly, prove several new results.
We denote by $\lambda_1$ and $\lambda_2$ the first and second eigenvalues of the Dirichlet Laplacian
on $\Omega$, respectively.
Whereas in~\cite{OSS1} the authors, who were seeking positive solutions,
obtained global bifurcation curves for
the parameter $a$ below $\lambda_1+\delta$, under suitable additional hypotheses we 
obtain global bifurcation curves for any $a$ below $\lambda_2+\delta$.
Part of the solutions on these curves will change sign.

We should mention that this paper is also related to problems addressing
the so called jumping nonlinearities. Here, unlike the most common hypothesis
(see, for example, \cite{CHV}), the nonlinearity is not asymptotically linear on
one of the ends of the real line.

Let us now state our most important results.
Let $\Omega$ be a smooth bounded domain in $\R^N$ with $N\geq 1$, $p>N$
and $\h=\{u\in W^{2,p}(\Omega): u=0\ {\rm on}\ \partial\Omega\}$.
We are interested in weak solutions of the equation (\ref{a})
belonging to the space $\h$. 
In the first sections of the paper we just assume 
\begin{enumerate}[{\bf (a)}]
 \item[{\bf (i)}] $f\in C^2(\R)$.
 \item[{\bf (ii)}] $f(u)=0$ for $u\leq M$, and $f(u)>0$ for $u>M$; throughout $M\geq 0$ is fixed.
 \item[{\bf (iii)}] $f''(u)\geq 0$. 
 \item[{\bf (iv)}] $\lim_{u\to+\infty} \frac{f(u)}{u}=+\infty$.
\item[{\bf (a)}] $h\in L^\infty(\Omega)$.
\item[{\bf (b)}] $h\geq 0$ in $\Omega$ and $h>0$ on a set of positive measure.
\end{enumerate}
For $a$ below $\lambda_2$ we prove
\begin{Thm}
\label{l1l2}
Suppose $f$ satisfies {\rm\bf (i)}-{\rm\bf (iv)}
and $h$ satisfies {\rm\bf (a)}-{\rm\bf (b)}.
 Fix $\lambda_1<a<\lambda_2$.
 The set of solutions $(c,u)$ of {\rm (\ref{a})} is a connected one dimensional manifold ${\cal M}$ of class $C^1$
in $\R\times\h$. We have 
$${\cal M}={\cal M}^\sharp\cup\{{\bm p}_*\}\cup{\cal M}^*,$$
where $\{{\bm p}_*\}$ connects
${\cal M}^\sharp$ and ${\cal M}^*$.
Here
\begin{itemize}
\item
${\cal M}^\sharp$ is the manifold of nondegenerate solutions with Morse index equal to one, which
is a graph $\{(c,u^\sharp(c)):c\in\,]-\infty,c_*[\}$.
\item ${\bm p}_*=(c_*,u_*)$ is a degenerate solution with Morse index equal to zero.
\item ${\cal M}^*$ is the manifold of stable solutions, which
is a graph $\{(c,u^*(c)):c\in\,]-\infty,c_*[\}$.
\end{itemize}
\end{Thm}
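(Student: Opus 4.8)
The plan is to realize the solution set as the zero set of the $C^2$ Fredholm map $F:\R\times\h\to L^p(\Omega)$, $F(c,u)=-\Delta u-au+f(u)+ch$, and to read off its geometry from the spectrum of the self-adjoint linearization $L_u:=-\Delta-a+f'(u)$ under Dirichlet conditions. The cornerstone is a spectral lemma exploiting $\lambda_1<a<\lambda_2$: since {\bf (ii)}--{\bf (iii)} force $f'\geq0$, the min--max principle gives $\mu_k(L_u)\geq\lambda_k-a$, whence $\mu_2(L_u)\geq\lambda_2-a>0$ for \emph{every} solution. Thus each solution has Morse index $0$ or $1$, and every degenerate solution has a one--dimensional kernel spanned by the principal eigenfunction $\phi_1>0$, at which the Morse index is $0$. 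This already pins down that the only admissible degeneracy is the fold claimed for ${\bm p}_*$.

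First I would establish the $C^1$ manifold structure. At a nondegenerate solution $L_u$ is invertible, so $D_uF$ is an isomorphism and the implicit function theorem yields a local graph $u=u(c)$; along such arcs the Morse index is constant, producing ${\cal M}^\sharp$ (index $1$) and ${\cal M}^*$ (index $0$). At a degenerate solution $L_u$ is Fredholm of index $0$ with $\ker L_u=\langle\phi_1\rangle$, so by the Fredholm alternative $h\notin\mathrm{Range}(L_u)$ precisely when $\int_\Omega h\,\phi_1\neq0$; since $\phi_1>0$ and $h\geq0$ is nontrivial by {\bf (b)}, this integral is strictly positive. Hence the full differential $DF=(D_cF,D_uF)$ is surjective with one--dimensional kernel at every solution, and ${\cal M}$ is a $C^1$ one--manifold. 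A Lyapunov--Schmidt reduction at ${\bm p}_*$ then gives $c$ as a $C^2$ function of the reduced variable $\xi$ with $c'(\xi_*)=0$ and $c''(\xi_*)$ proportional to $-\dfrac{\int_\Omega f''(u_*)\phi_1^3}{\int_\Omega h\,\phi_1}$; because $f''\geq0$ and $\phi_1>0$ this is $\leq0$, so $c$ attains a local maximum at every fold.

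The global picture I would obtain from a priori bounds together with a continuation/degree argument. Testing {\rm(\ref{a})} against the first Dirichlet eigenfunction $\phi_1^{D}>0$ and using the superlinearity {\bf (iv)} (which forces $f(u)\geq Ku-C_K$ for every $K$) yields, for $K>a-\lambda_1$, two incompatible estimates on $\int_\Omega u\,\phi_1^{D}$ once $c$ is large; this proves nonexistence for $c$ beyond some $c^{+}$, and comparison arguments give bounds on $\|u\|$ on compact $c$--ranges. Parametrizing each component of ${\cal M}$ by arclength, the zeros of $c(s)$ are exactly the folds, each a local maximum; by Rolle two folds would enclose a local minimum of $c$, i.e.\ a forbidden fold, so each component carries at most one fold, which excludes closed loops and forces every component to be an arc running to $c\to-\infty$ at both ends. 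Matching the Morse indices $0$ and $1$ across the fold and identifying the two ends with $u^*(c)$ and $u^\sharp(c)$ then delivers the stated graphs over $\,]-\infty,c_*[\,$, with $c_*$ the fold value.

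The main obstacle is this final global step: proving that ${\cal M}$ is \emph{connected}, i.e.\ a single arc, and hence that the count is exactly two solutions for $c<c_*$, one at $c_*$, and none for $c>c_*$. A priori bounds and degree control solutions only in pairs and do not by themselves forbid several parallel arcs, so I expect to need a sharp asymptotic analysis of the branches as $c\to-\infty$ --- the ``critical points at infinity'' regime --- to show that there are exactly two solutions for each sufficiently negative $c$, pinning ${\cal M}$ down to one component with a unique fold. A secondary delicate point is the borderline case $\int_\Omega f''(u_*)\phi_1^3=0$, where $c''(\xi_*)=0$ and the fold is non-quadratic; there one must argue, using that convexity together with {\bf (iv)} prevents $f$ from being affine on the range of $u_*$, that $c$ still has a strict maximum, so that the Rolle argument survives.
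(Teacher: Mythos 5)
Your local analysis is essentially the paper's: the bound $\mu_2\geq\lambda_2-a>0$ via the quadratic form (the paper's $Q_a$ in (\ref{q})), the resulting dichotomy (index one $\Rightarrow$ nondegenerate, degenerate $\Rightarrow$ index zero with positive kernel function), the manifold structure from the Implicit Function Theorem plus $\int hw_*>0$, and the strictly concave fold are all present in the paper (Lemma~\ref{turn}). Incidentally, the ``borderline case'' you worry about is vacuous rather than delicate: if $(c_*,u_*)$ is degenerate and $\lambda_1<a<\lambda_2$, then $\max_\Omega u_*>M$ is forced (otherwise $f'(u_*)\equiv 0$ and the linearization $-\Delta-a$ is invertible because $a$ is not an eigenvalue), and then convexity together with $f\equiv 0$ on $\,]-\infty,M]$ and $f>0$ on $\,]M,+\infty[$ gives $\int f''(u_*)w_*^3>0$, so the fold is strictly quadratic.

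The genuine gap is exactly the step you flag and do not carry out: excluding several parallel arcs, each with its own fold. Your Rolle argument only limits folds \emph{per component}; it says nothing about the number of components, and the asymptotic count of solutions as $c\to-\infty$ that you invoke is left entirely unproved (and is not how the paper proceeds below $\lambda_2$; ``infinity'' arguments are reserved for $a>\lambda_2$, in Section~\ref{last}). The paper closes the gap with a different, prior result: Theorem~\ref{thmd} shows that the set of \emph{all} degenerate solutions with Morse index zero, over all $a$, is one connected curve $\D_*$, graph over $a\in\,]\lambda_1,+\infty[$; this is proved by continuation in the parameter $a$ (not $c$) down to $a=\lambda_1$, where degenerate solutions can be classified explicitly as $(\lambda_1,t\phi,0)$, $t\leq M$. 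Hence for each fixed $a$ there is exactly \emph{one} fold $(c_*(a),u_*(a))$, and it has $c_*(a)>0$. Connectedness then follows with no analysis at $c=-\infty$: by (\ref{upc}) the value of $c$ is bounded above on solutions, and by Remark~\ref{unico} branches cannot blow up while $c$ stays bounded, so every component, followed in increasing $c$, must terminate at a degenerate solution, which can only be ${\bm p}_*$; by Lemma~\ref{turn} exactly two arcs emanate from ${\bm p}_*$, namely the ones through $(0,u_\dagger(a))$ and $(0,0)$, so there is no room for any further component. Without an analogue of Theorem~\ref{thmd} (or a completed count at $c\to-\infty$), your proposal does not prove connectedness, nor therefore the exact solution counts claimed in the statement.
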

Theorem~\ref{l1l2} is illustrated in Figure~\ref{fig1}.
\begin{figure}
\centering
\begin{psfrags}
\psfrag{c}{{\tiny $c$}}
\psfrag{u}{{\tiny $u$}}
\psfrag{a}{{\tiny $(c,u^*(c))$}}
\psfrag{f}{{\tiny $(0,t\psi)$}}
\psfrag{N}{{\tiny $\!\!\!\!\!\!-\frac{M}{\beta}\psi$}}
\psfrag{b}{{\tiny $(c,u^\sharp(c))$}}
\psfrag{e}{{\tiny $(c,u^\flat(c))$}}
\psfrag{d}{{\tiny $(c_*,u_*)$}}
\includegraphics[scale=.55]{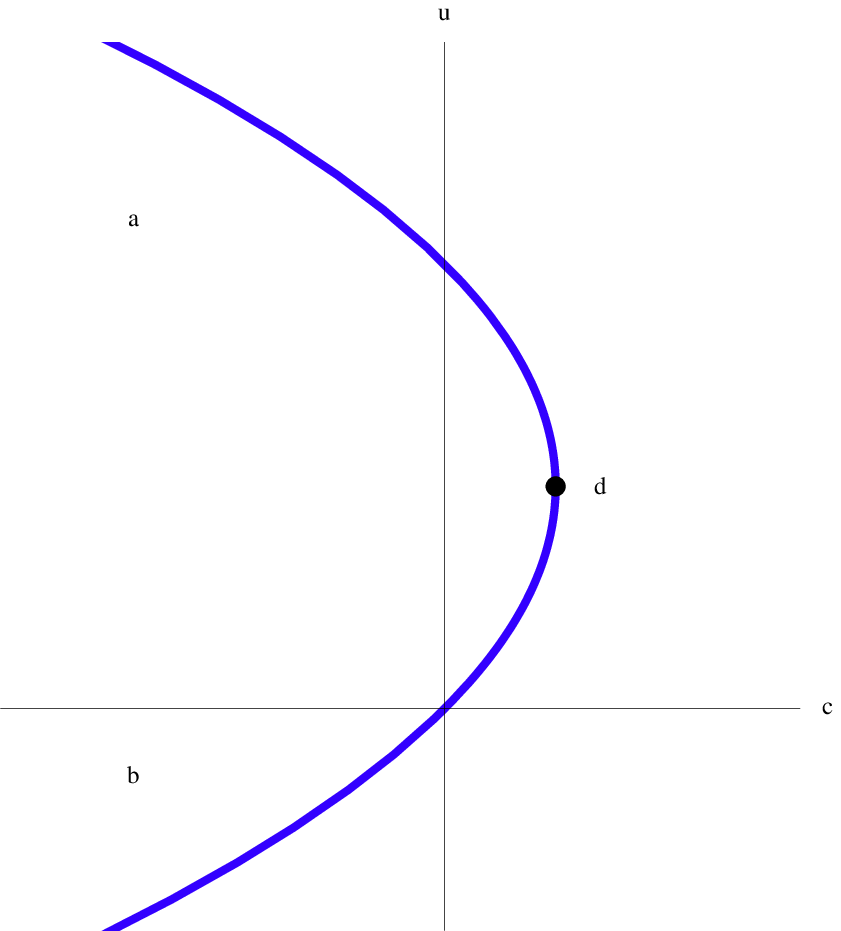}\ \ \ \ \ \ \ \ \ \ \ \ 
\includegraphics[scale=.54]{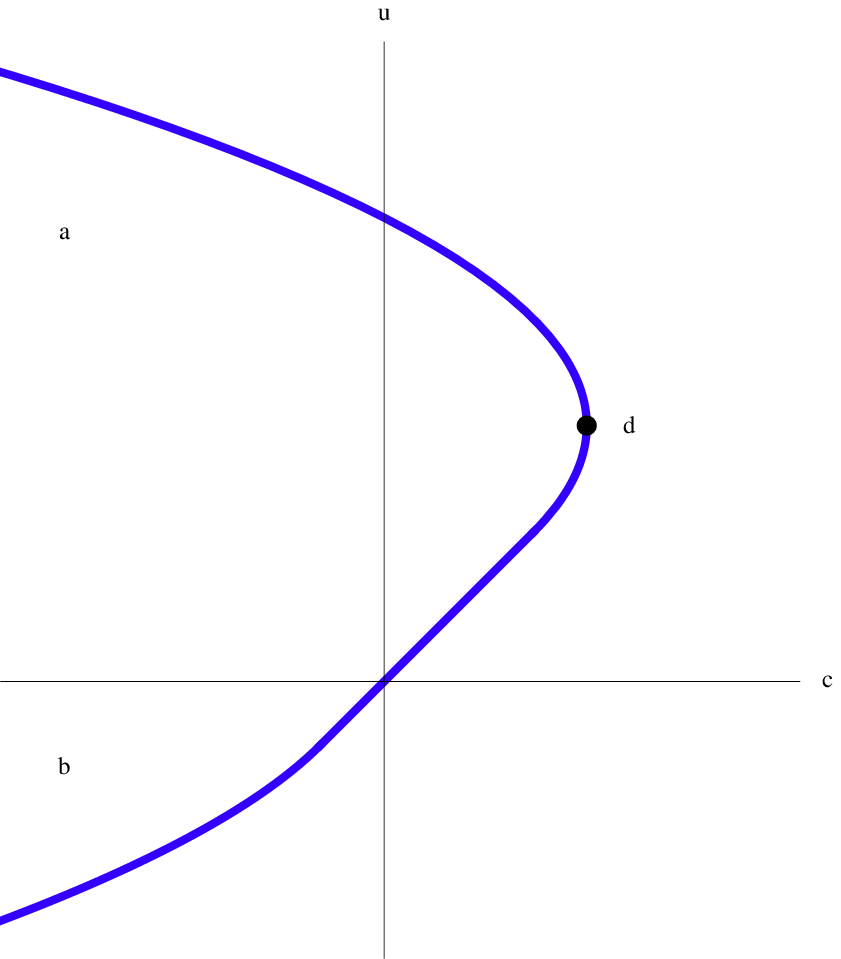}
\end{psfrags}
\caption{Bifurcation curve for $\lambda_1<a<\lambda_2$. 
On the left $M=0$ and on the right $M>0$.}\label{fig1}
\end{figure}

To study (\ref{a}) for $a\geq\lambda_2$ we make additional assumptions.
Specifically, we assume 
\begin{enumerate}
 \item[{\bf ($\bm\alpha$)}] $\lambda_2$ is simple, with eigenspace spanned by $\psi$.
 \item[{\bf (c)}] $\int h\psi\neq 0$. 
\end{enumerate}
When the region of integration is omitted it is understood to be $\Omega$.
We denote by $\phi$ the first eigenfunction of the Dirichlet Laplacian
satisfying $\max_\Omega\phi=1$,
and
we also normalize the second eigenfunction $\psi$ 
so $\max_\Omega\psi=1$. We define 
\begin{equation}\label{beta}
\beta=-\min_\Omega\psi,
\end{equation}
 so that $\beta>0$. 
For $a$ equal to $\lambda_2$ we prove
\begin{Thm}
\label{thmatl2} Suppose $f$ satisfies {\rm\bf (i)}-{\rm\bf (iv)}, {\rm $\bm(\bm\alpha\bm)$} holds
and $h$ satisfies {\rm\bf (a)}-{\rm\bf (c)}.
 Fix $a=\lambda_2$. 
 The set of solutions $(c,u)$ of {\rm (\ref{a})} is a connected one dimensional manifold ${\cal M}$ of class $C^1$
in $\R\times\h$. We have 
$${\cal M}={\cal M}^\flat\cup{\cal L}\cup{\cal M}^\sharp\cup\{{\bm p}_*\}\cup{\cal M}^*,$$
where ${\cal L}$ connects ${\cal M}^\flat$ and ${\cal M}^\sharp$, and $\{{\bm p}_*\}$ connects
${\cal M}^\sharp$ and ${\cal M}^*$.
Here
\begin{itemize}
\item
${\cal M}^\flat$ is a manifold of nondegenerate solutions with Morse index equal to one, which
is a graph $\{(c,u^\flat(c)):c\in\,]-\infty,0[\}$.
\item
${\cal L}$ is a segment (a point in the case $M=0$) of degenerate solutions with Morse index
equal to one, $\bigl\{(0,t\psi):t\in\bigl[-\frac{M}{\beta},M\bigr]\bigr\}$. 
\item
${\cal M}^\sharp$ is a manifold of nondegenerate solutions with Morse index equal to one, which
is a graph $\{(c,u^\sharp(c)):c\in\,]0,c_*[\}$.
\item ${\bm p}_*=(c_*,u_*)$ is a degenerate solution with Morse index equal to zero.
\item ${\cal M}^*$ is the manifold of stable solutions, which
is a graph $\{(c,u^*(c)):c\in\,]-\infty,c_*[\}$.
\end{itemize}
\end{Thm}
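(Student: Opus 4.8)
The plan is to study the zero set of the $C^1$ Fredholm map $F\colon\R\times\h\to L^p(\Omega)$, $F(c,u)=-\Delta u-\lambda_2u+f(u)+ch$, whose $u$-derivative $D_uF(c,u)=-\Delta-\lambda_2+f'(u)$ is self-adjoint of index $0$. The driving observation is a uniform Morse-index bound: by {\bf (ii)}--{\bf (iii)} one has $f'(u)\ge0$ pointwise, so $D_uF(c,u)\ge-\Delta-\lambda_2$ and, by monotonicity of eigenvalues, its $k$-th eigenvalue obeys $\mu_k\ge\lambda_k-\lambda_2$. Since $(\bm\alpha)$ forces $\lambda_3>\lambda_2$, we get $\mu_k>0$ for $k\ge3$ and $\mu_2\ge0$, while $\mu_1$ is simple with a positive eigenfunction; hence every solution has Morse index $0$ or $1$, and it is degenerate exactly when $\mu_1=0$ (index $0$) or $\mu_1<\mu_2=0$ (index $1$). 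These two alternatives will turn out to be precisely $\bm p_*$ and $\mathcal L$. (In Theorem~\ref{l1l2}, where $a<\lambda_2$, one has $\mu_2\ge\lambda_2-a>0$ strictly, so no index-$1$ degenerate solutions occur and $\mathcal L$ is absent; the novelty at $a=\lambda_2$ is precisely that $\mu_2$ may now vanish.)

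Next I would identify the degenerate set explicitly. For an index-$1$ degenerate solution, strict monotonicity of the eigenvalues under a nonnegative potential (via unique continuation for the zero eigenfunction) forces $f'(u)\equiv0$ whenever $\mu_2=0$, i.e.\ $u\le M$, so $f(u)=0$; the equation becomes $-\Delta u=\lambda_2u-ch$, testing against $\psi$ gives $c\int h\psi=0$, whence $c=0$ by {\bf (c)}, and then $u=t\psi$ with $t\psi\le M$, i.e.\ $t\in[-\frac{M}{\beta},M]$ after the normalizations $\max\psi=1$, $-\min\psi=\beta$. This pins down $\mathcal L$ and shows that along it the kernel is $\mathrm{span}\,\psi$ and the index is $1$. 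For the index-$0$ degenerate solution I would run the Crandall--Rabinowitz fold analysis at $\bm p_*=(c_*,u_*)$: the kernel is spanned by the positive ground state $\phi_*$ of $D_uF(c_*,u_*)$, the transversality $\int h\phi_*>0$ holds since $\phi_*>0$ and $h\ge0$, $h\not\equiv0$, and the quadratic coefficient $\int f''(u_*)\phi_*^2>0$ (it cannot vanish, for $f''\equiv0$ on the support of $\phi_*$ would put $u_*$ in the linear regime where $\mu_1=\lambda_1-\lambda_2<0$). Thus $\bm p_*$ is a nondegenerate turning point, $u=u_*+s\phi_*+o(s)$ with $c-c_*\sim-\frac{\int f''(u_*)\phi_*^2}{2\int h\phi_*}\,s^2<0$, where an index-$1$ arc and a stable arc coalesce; the convexity $f''\ge0$ is what keeps this turning point unique.

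Away from the degenerate set, $D_uF$ is invertible, so by the implicit function theorem $\{F=0\}$ is locally a $C^1$ graph $c\mapsto u(c)$; these graphs furnish the arcs $\mathcal M^\flat,\mathcal M^\sharp,\mathcal M^*$ (the upper portion $\mathcal M^\sharp\cup\{\bm p_*\}\cup\mathcal M^*$ being obtained by the same arguments as in Theorem~\ref{l1l2}), while the interior of $\mathcal L$ is swept at $c=0$. To glue across the endpoints $M\psi$ and $-\frac{M}{\beta}\psi$ of $\mathcal L$ I would carry out a Lyapunov--Schmidt reduction with kernel $\mathrm{span}\,\psi$, producing the scalar equation $\Phi(t,c)=\int\psi\,f(t\psi+v(t,c))+c\int h\psi=0$ for the resolved complement $v(t,c)$. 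Once $t$ crosses $M$ (resp.\ $-\frac{M}{\beta}$) the term $f$ turns on only near the maximum set where $\psi>0$ (resp.\ the minimum set where $\psi<0$), so $\int\psi f$ acquires a definite sign; combined with $\int h\psi\ne0$ this shows $\mathcal M^\flat$ and $\mathcal M^\sharp$ detach into $c<0$ and $c>0$ in opposite directions, their order fixed by the sign of $\int h\psi$. Moreover $\int\psi f$ vanishes to higher order in the crossing parameter, so each arc leaves $\mathcal L$ tangentially, which is exactly what makes the glued object $C^1$ (and in general no better) at the junctions. Finally I would use the superlinearity {\bf (iv)} together with $h\ge0$ to derive a priori bounds and the behavior as $c\to-\infty$, proving that $\mathcal M^\flat$ and $\mathcal M^*$ are entire graphs over $]-\infty,0[$ and $]-\infty,c_*[$ and that no arc terminates except on $\mathcal L$ or at $\bm p_*$; a degree and connectedness argument then identifies $\mathcal M$ with the full solution set and assembles the arcs into a single connected curve running in from $c=-\infty$ along $\mathcal M^\flat$, across $\mathcal L$, up $\mathcal M^\sharp$ to the fold $\bm p_*$, and back along $\mathcal M^*$ to $c=-\infty$.

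I expect the main obstacle to be the analysis at $\mathcal L$. Because $\mathcal L$ is a continuum of degenerate solutions rather than an isolated bifurcation point, and because $f$ vanishes to high order at $M$, the reduced map $\Phi$ is extremely flat near the endpoints and the forcing from $f$ concentrates on shrinking neighborhoods of the extremal set of $\psi$. Quantifying this concentration sharply enough to fix the directions in which $\mathcal M^\flat,\mathcal M^\sharp$ emanate and to match tangents for $C^1$ regularity across the junctions is the \emph{technical heart} of the proof; by comparison the fold at $\bm p_*$ and the global a priori bounds are standard.
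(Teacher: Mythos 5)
Your proposal is correct and, in its architecture, is essentially the paper's own proof: the Morse-index bound (Lemma~\ref{oned}, which the paper proves via the quadratic form $Q_{\lambda_2}$ rather than eigenvalue monotonicity — the two arguments are equivalent, and your appeal to strict monotonicity ``via unique continuation'' is exactly the mechanism hidden in the paper's claim that the two-dimensional nonpositivity space contains $\psi$), the identification of the index-one degenerate set with ${\cal L}$ using {\bf (c)}, the IFT/Lyapunov--Schmidt curve through ${\cal L}$ (Lemma~\ref{zero}), the fold at ${\bm p}_*$ (Lemma~\ref{turn}, together with Theorems~\ref{thmd} and \ref{big} for existence and uniqueness), and completeness by continuation plus a priori bounds (Remark~\ref{unico} and (\ref{upc})). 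There are, however, three localized differences worth recording. First, you fix the directions in which ${\cal M}^\flat$ and ${\cal M}^\sharp$ detach from ${\cal L}$ by computing the sign of $\int f(t\psi+v)\,\psi$ just past the endpoints; the paper avoids any such computation at $a=\lambda_2$: it relabels $\psi\mapsto-\psi$ without loss of generality and then forces the two side restrictions of $c^{\flat\sharp}$ to be monotone in opposite $c$-directions by the fold-uniqueness argument (two distinct branches cannot both arrive at ${\bm p}_*$ without violating Lemma~\ref{turn}), deferring the sign identification to Section~\ref{pl2}. Your computation is sound and in fact more informative. Second, your assembly runs \emph{upward} from $M\psi$ to the fold, which bypasses the paper's Lemma~\ref{l2}; the paper needs that lemma because it runs \emph{downward} from ${\bm p}_*$ and must know that the index-one branch degenerates exactly when $c$ reaches $0$. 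Third — and this answers your closing worry — the ``technical heart'' you anticipate at ${\cal L}$ does not exist: no quantitative concentration estimate is needed. In Lemma~\ref{zero} one solves for $(y,c)\in{\cal S}\times\R$ as $C^1$ functions of $t$, the linearization $(z,\gamma)\mapsto\Delta z+\lambda_2 z-\gamma h$ being invertible precisely because $\int h\psi\neq 0$; the resulting curve is $C^1$ through and beyond both endpoints of ${\cal L}$ by construction, and the tangential departure (Remark~\ref{rmk1}) is automatic from $\frac{dc^{\flat\sharp}}{dt}=0$ at the junctions. Finally, a typo-level slip: the quadratic coefficient at the fold is $\int f''(u_*)\phi_*^3$ (cubic in the kernel element), not $\int f''(u_*)\phi_*^2$; compare the formula for $\cc''(t_*)$ in Lemma~\ref{turn}.
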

Theorem~\ref{thmatl2} is illustrated in Figure~\ref{fig2}.
\begin{figure}
\centering
\begin{psfrags}
\psfrag{c}{{\tiny $c$}}
\psfrag{u}{{\tiny $u$}}
\psfrag{a}{{\tiny $(c,u^*(c))$}}
\psfrag{f}{{\tiny $(0,t\psi)$}}
\psfrag{N}{{\tiny $\!\!\!\!\!\!-\frac{M}{\beta}\psi$}}
\psfrag{b}{{\tiny $(c,u^\sharp(c))$}}
\psfrag{e}{{\tiny $(c,u^\flat(c))$}}
\psfrag{d}{{\tiny $(c_*,u_*)$}}
\includegraphics[scale=.6]{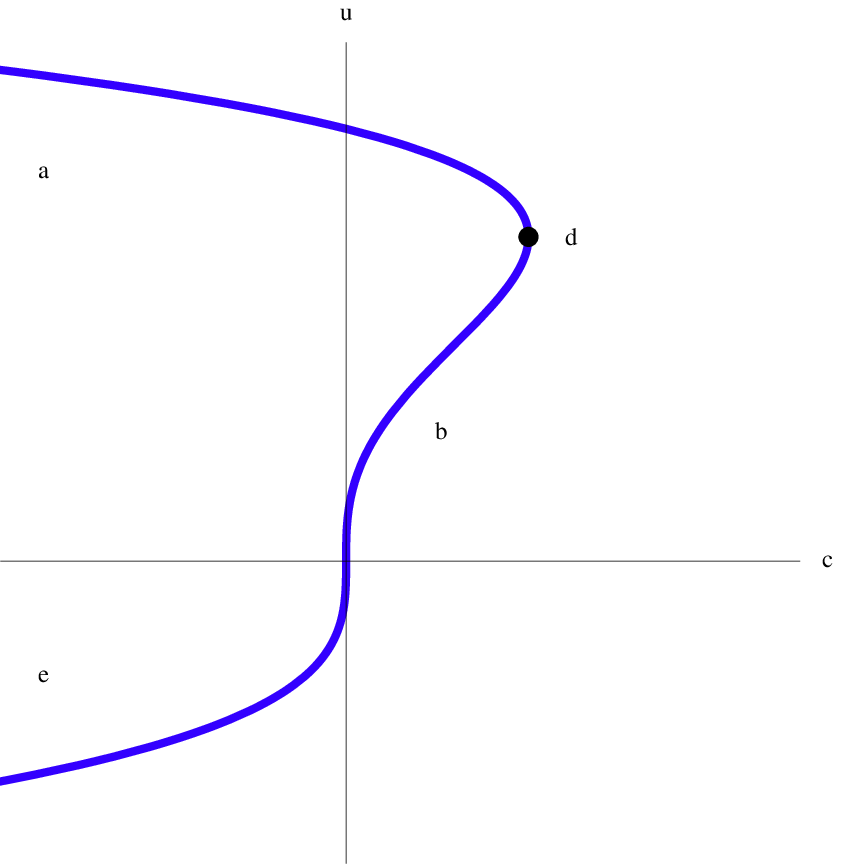}\ \ \ \ \ \ \ \ \ \ \ \ 
\includegraphics[scale=.54]{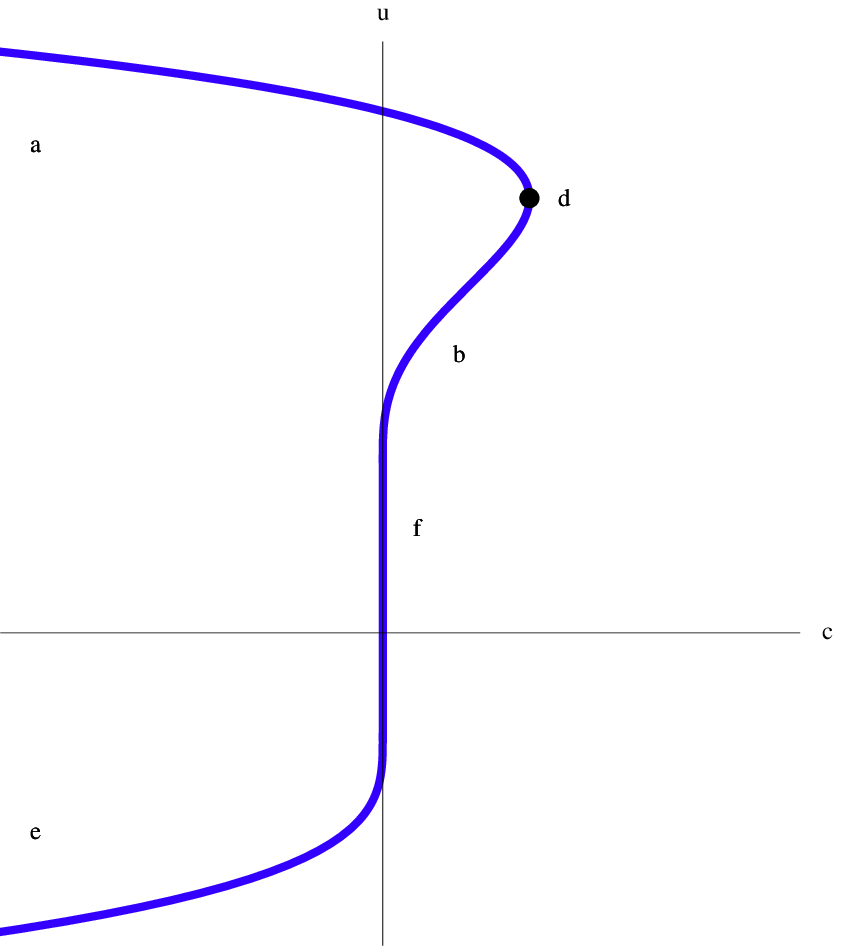}
\end{psfrags}
\caption{Bifurcation curve for $a=\lambda_2$. On the left $M=0$ and on the right $M>0$.}\label{fig2}
\end{figure}
\begin{Rmk}\label{rmk1}
If $M=0$ the set of solutions with Morse index equal to one is 
the graph of a continuous function 
$$u^{\flat\sharp}(c)=\left\{
\begin{array}{ll}
u^\flat(c)&{\rm for}\ c<0,\\
0&{\rm for}\ c=0,\\
u^\sharp(c)&{\rm for}\ 0<c<c_*,
\end{array}
\right.
$$
which is not differentiable at zero.
\end{Rmk}

We can go a little bit beyond $\lambda_2$ provided we strengthen {\bf (b)} to
\begin{enumerate}
 \item[{\bf (b)$'$}] $h>0$ a.e.\ in $\Omega$.
\end{enumerate}
To fix ideas, without loss of generality, suppose
\begin{equation}\label{hpsi}
\int h\psi<0.
\end{equation}
We define
\begin{equation}\label{s}
\textstyle\rs:=\bigl\{y\in\h:\int y\psi=0\bigr\}.
\end{equation}
For $a$ above $\lambda_2$ we prove
\begin{Thm}
\label{thm} Suppose $f$ satisfies {\rm\bf (i)}-{\rm\bf (iv)}, {\rm $\bm(\bm\alpha\bm)$} holds
and $h$ satisfies {\rm\bf (a)}, {\rm\bf (b)$'$}, {\rm\bf (c)}. Without loss of generality, suppose {\rm (\ref{hpsi})} is true.
There exists $\delta>0$ such that the following holds.
Fix $\lambda_2<a<\lambda_2+\delta$. 
The set of solutions $(c,u)$ of~{\rm (\ref{a})} is a connected one dimensional manifold ${\cal M}$ of class $C^1$
in $\R\times\h$. We have ${\cal M}$ is the disjoint union
$${\cal M}={\cal M}^\flat\cup\{{\bm p}_\flat\}\cup{\cal M}^\natural\cup\{{\bm p}_\sharp\}\cup{\cal M}^\sharp\cup\{{\bm p_*}\}\cup{\cal M}^*,$$
where $\{{\bm p}_\flat\}$ connects ${\cal M}^\flat$ and ${\cal M}^\natural$, $\{{\bm p}_\sharp\}$ connects
${\cal M}^\natural$ and ${\cal M}^\sharp$, and $\{{\bm p}_*\}$ connects ${\cal M}^\sharp$ and ${\cal M}^*$.
Here
\begin{itemize}
\item ${\cal M}^\flat$ is a manifold 
of nondegenerate solutions with Morse index equal to one, which
is a graph $\{(c,u^\flat(c)):c\in\,]-\infty,c_\flat[\}$.
\item
${\bm p}_\flat=(c_\flat,u_\flat)$ is a degenerate solution with Morse index equal to one.
\item ${\cal M}^\natural$ is a manifold
of solutions with Morse index equal to one or to two, 
$$\bigl\{(\cc^\natural(t),u^\natural(t)):u^\natural(t)=t\psi+\y^\natural(t),\ t\in J\bigr\},$$
with $\cc^\natural:J\to\R$, 
$\y^\natural:J\to{\cal S}$ and
$J=\bigl]-\frac{M}{\beta}-\eps_\flat,M+\eps_\sharp\bigr[$, for some $\eps_\flat,\eps_\sharp>0$.
\item
${\bm p}_\sharp=(c_\sharp,u_\sharp)$ is a degenerate solution with Morse index equal to one.
\item ${\cal M}^\sharp$ is a manifold 
of nondegenerate solutions with Morse index equal to one, which
is a graph $\{(c,u^\sharp(c)):c\in\,]c_\sharp,c_*[\}$.
\item
${\bm p}_*=(c_*,u_*)$ is a degenerate solution with Morse index equal to zero.
\item ${\cal M}^*$ is the manifold of stable solutions, which
is a graph $\{(c,u^*(c)):c\in\,]-\infty,c_*[\}$.
\end{itemize}
We have $(\cc^\natural)'(0)<0$ and
$$
\lim_{t\to -\frac{M}{\beta}-\eps_\flat}(\cc^\natural(t),u^\natural(t))\ =\ (c_\flat,u_\flat),\ 
\lim_{t\to M+\eps_\sharp}(\cc^\natural(t),u^\natural(t))\ =\ (c_\sharp,u_\sharp).
$$
In particular, if $|c|$ is sufficiently small,
then {\rm (\ref{a})} has at least four solutions.
\end{Thm}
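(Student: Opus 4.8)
The plan is to build on Theorem~\ref{thmatl2} through a Lyapunov--Schmidt reduction along the direction $\psi$, treating $a\in(\lambda_2,\lambda_2+\delta)$ as a perturbation of $a=\lambda_2$. Writing a prospective solution as $u=t\psi+\y$ with $\y\in\rs$, I split~(\ref{a}) into its $L^2$-projection onto $\psi$ and onto $\rs$. The linearized operator $-\Delta-a+f'(u)$, acting on $\rs$, has one negative eigenvalue near $\lambda_1-a$ (eigendirection $\phi$, and $f'\ge0$ by {\bf (ii)}--{\bf (iii)} only raises the quadratic form) while the remaining eigenvalues lie near $\lambda_k-a\ge\lambda_3-a>0$ for $k\ge3$; for $\delta$ small all of these are bounded away from zero, so the operator is invertible on $\rs$. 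The implicit function theorem then yields a $C^1$ map $(t,c)\mapsto\y(t,c)\in\rs$ solving the $\rs$-equation, and substituting $u=t\psi+\y(t,c)$ into the $\psi$-projection reduces~(\ref{a}), near the critical set, to the single scalar equation $g(t,c)=0$, where
$$g(t,c)=(\lambda_2-a)\,t\int\psi^2+\int f\bigl(t\psi+\y(t,c)\bigr)\psi+c\int h\psi .$$

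Because $\partial_c g\ne0$ --- it equals $\int h\psi\ne0$ wherever $f'\equiv0$ and stays nonzero for $\delta$ small by {\bf (c)} --- I solve $g=0$ for $c$ rather than for $t$, obtaining $\cc^\natural(t)$ and presenting ${\cal M}^\natural$ as a graph over $t\in J$, with $\y^\natural(t)=\y(t,\cc^\natural(t))$; this is exactly why this branch is parametrized by $t$. In the linear region, where $u\le M$ and hence $f\equiv0$, the equation is affine and gives $\cc^\natural(t)=\frac{(a-\lambda_2)\int\psi^2}{\int h\psi}\,t$, whose slope is negative because $\int h\psi<0$ by~(\ref{hpsi}); in particular $(\cc^\natural)'(0)<0$ and $\cc^\natural(0)=0$. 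Differentiating $g(t,\cc^\natural(t))\equiv0$ gives $(\cc^\natural)'=-\partial_t g/\partial_c g$, so $(\cc^\natural)'$ has the same sign as $\partial_t g$. Since the reduction is variational, the Morse index along this branch equals one (the contribution of $\phi$ on $\rs$) plus the Morse index of the scalar map, hence equals one where $(\cc^\natural)'>0$ and two where $(\cc^\natural)'<0$, the solution being degenerate exactly where $(\cc^\natural)'=0$. As $t$ increases past $M$ or decreases past $-\frac{M}{\beta}$ the competition term switches on and $\int f(t\psi+\y^\natural)\psi$ bends the graph, producing the two folds $\bm p_\flat=(c_\flat,u_\flat)$ and $\bm p_\sharp=(c_\sharp,u_\sharp)$ at which a single eigenvalue crosses zero; this fixes their degeneracy with Morse index one and the widths $\eps_\flat,\eps_\sharp$.

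The outer branches are obtained exactly as in the proofs of Theorems~\ref{l1l2} and~\ref{thmatl2}: the stable branch ${\cal M}^*$ and the turning point $\bm p_*$ come from the a priori bounds and the nondegeneracy of the stable and fold solutions, while on ${\cal M}^\flat$ and ${\cal M}^\sharp$ the full linearization is invertible, so the implicit function theorem presents them as $C^1$ graphs over $c$; all of these persist for $\lambda_2<a<\lambda_2+\delta$ by continuity in $a$. It then remains to glue the pieces: near each of $\bm p_\flat,\bm p_\sharp,\bm p_*$ the same Lyapunov--Schmidt reduction shows the solution set is a single $C^1$ arc through the degenerate point (a nondegenerate fold of the reduced scalar map), so ${\cal M}$ is a $C^1$ one-manifold, and tracing it from ${\cal M}^*$ through $\bm p_*$, ${\cal M}^\sharp$, $\bm p_\sharp$, ${\cal M}^\natural$, $\bm p_\flat$, to ${\cal M}^\flat$ exhibits ${\cal M}$ as connected. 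Finally, for $|c|$ small one has $c\in(c_\sharp,c_\flat)\cap(-\infty,c_*)$, so each of ${\cal M}^*,{\cal M}^\flat,{\cal M}^\natural,{\cal M}^\sharp$ meets the slice $\{c=\mathrm{const}\}$ at least once, giving at least four solutions.

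I expect the main obstacle to be the analysis of the reduced equation in the transition zone, where $t\psi+\y^\natural$ crosses the threshold $M$ and $f$ switches on. Hypotheses {\bf (i)}--{\bf (iii)} only furnish $f''\ge0$, with $f''$ allowed to vanish, so the very term that must bend $\cc^\natural$ and create $\bm p_\flat,\bm p_\sharp$ is weak precisely where those folds form. Showing that $\int f(t\psi+\y^\natural)\psi$ and its $t$-derivative grow fast enough to produce exactly two transverse folds, uniformly as $a\downarrow\lambda_2$, so that a single $\delta>0$ works, is the delicate point. A subsidiary difficulty is keeping $-\Delta-a+f'(u)$ invertible on $\rs$ along all of ${\cal M}^\natural$: since $J$ is bounded and $f'$ is small near the threshold this follows from a sup-norm bound on the solutions, but it must be verified uniformly in the perturbation.
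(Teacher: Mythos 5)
Your local analysis near the segment $\{t\psi\}$ is, in different coordinates, the paper's own: the Lyapunov--Schmidt reduction in the $\psi$-direction, the computation giving $(\cc^\natural)'(0)<0$ from $\int h\psi<0$, and the index bookkeeping (one from the $\phi$-direction plus zero or one from the reduced scalar equation) reproduce the chart of Lemma~\ref{V}, the degenerate curve $\D_\varsigma$ of Lemma~\ref{curve2}, and Remark~\ref{rmk2}. Incidentally, the obstacle you single out as the main one --- producing \emph{exactly two transverse} folds, uniformly as $a\downarrow\lambda_2$ --- is not actually required: the theorem allows interior degenerate points on ${\cal M}^\natural$, and the paper simply defines ${\bm p}_\flat,{\bm p}_\sharp$ through the extreme intersections of the line $a=\mathrm{const.}$ with the curve $t\mapsto\afs(t)$, whose values exceed $\lambda_2$ off the segment; no transversality or exact count is needed.

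The genuine gap is in the global continuation. Your claim that ``on ${\cal M}^\flat$ and ${\cal M}^\sharp$ the full linearization is invertible, so the implicit function theorem presents them as $C^1$ graphs over $c$; all of these persist for $\lambda_2<a<\lambda_2+\delta$ by continuity in $a$'' assumes precisely what must be proved. For $\lambda_1<a\leq\lambda_2$, nondegeneracy of index-one solutions away from the segment is automatic (the quadratic form argument, Lemma~\ref{oned}), but for $a>\lambda_2$ degenerate index-one solutions genuinely exist (all of $\D_\varsigma$), and nothing in your argument prevents the branch ${\cal M}^\flat$, followed as $c$ decreases, from hitting another such solution --- a fold --- at some very negative $c$, destroying the graph description over $]-\infty,c_\flat[$. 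Continuity in $a$ gives persistence only on compact $c$-ranges: for each $L$ one obtains a $\delta_L$ valid for $c>-L$ (this is exactly the paper's Proposition~\ref{prop}, via Lemma~\ref{L}), but a priori $\delta_L\to 0$ as $L\to+\infty$, so no single $\delta$ results. Closing this is the paper's key Lemma~\ref{ultimo} (``no degenerate solutions with Morse index equal to one at infinity''): a blow-up argument producing $\delta_0>0$ and $c_0<0$ such that no degenerate index-one solution has $a<\lambda_2+\delta_0$ and $c<c_0$. That argument normalizes $v_n=u_n/\|u_n\|_{L^2(\Omega)}$, shows a linear combination of the first eigenfunction and the degenerate direction converges strongly to $\psi$, uses $f'(u)\geq f(u)/u$ together with the unique continuation principle to force the weak limit of $f(u_n)/\|u_n\|_{L^2(\Omega)}$ to vanish, and then contradicts {\bf (c)}. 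It is the only place where the strengthened hypothesis {\bf (b)$'$} enters --- your proposal never invokes {\bf (b)$'$}, which is a telltale sign the decisive step is missing. Finally, note that connectedness of the curve you trace does not by itself show ${\cal M}$ exhausts the solution set; completeness requires the upper bound (\ref{upc}) together with the classification of all degenerate solutions (Theorem~\ref{thmd}, Lemma~\ref{turn}, and Lemmas~\ref{L} and \ref{ultimo}), as in the proof of Theorem~\ref{l1l2}.
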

Theorem~\ref{thm} is illustrated in Figure~\ref{fig3}.
\begin{figure}
\centering
\begin{psfrags}
\psfrag{c}{{\tiny $c$}}
\psfrag{u}{{\tiny $u$}}
\psfrag{b}{{\tiny $(c_\flat,u_\flat)$}}
\psfrag{n}{{\tiny $(c_\sharp,u_\sharp)$}}
\psfrag{s}{{\tiny $(c_*,u_*)$}}
\psfrag{h}{{\tiny $(c,u^\sharp(c))$}}
\psfrag{m}{{\tiny $(c^\natural(t),u^\natural(t))$}}
\psfrag{d}{{\tiny $(c,u^\flat(c))$}}
\psfrag{t}{{\tiny $(c,u^*(c))$}}
\includegraphics[scale=.65]{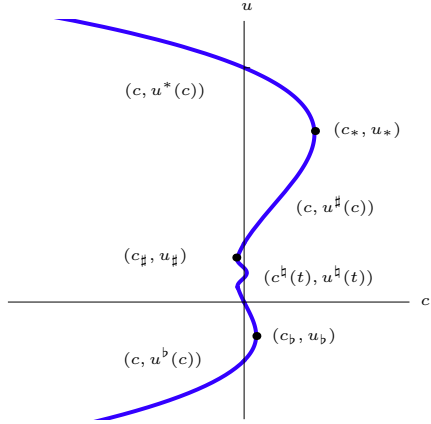}
\end{psfrags}
\caption{A bifurcation curve for $\lambda_2<a<\lambda_2+\delta$.}\label{fig3}
\end{figure}
\begin{Rmk}\label{rmk2}
The sign of $(\cc^\natural)^\prime$ allows us to classify the solutions on ${\cal M}^\natural$. Assuming {\rm (\ref{hpsi})} holds
\begin{itemize}
\item
If $(\cc^\natural)^\prime(t)>0$, then $(\cc^\natural(t),u^\natural(t))$
is a nondegenerate solution with Morse index equal to one.
\item
If $(\cc^\natural)^\prime(t)<0$, then $(\cc^\natural(t),u^\natural(t))$
is a nondegenerate solution with Morse index equal to two.
\item
If $(\cc^\natural)^\prime(t)=0$, then $(\cc^\natural(t),u^\natural(t))$ 
is a degenerate solution with Morse index equal to one.
\end{itemize}
\end{Rmk}

The above three theorems are our main results. 
In addition, we also prove
Theorem~\ref{thmd} (degenerate solutions with Morse index equal to zero),
Proposition~\ref{p1} (behavior of $c$ along the curve of degenerate solutions
with Morse index equal to zero), Proposition~\ref{p2} 
(stable solutions are superharmonic for small $|c|$), 
Theorem~\ref{i1}
(existence of at least three solutions for $a>\lambda_2$, $a$ not an eigenvalue and small $|c|$) and
Proposition~\ref{tp2}
(solutions for the case $c=0$, around and bifurcating from $(\lambda_2,0)$).

The above results are, to our knowledge, new. The following are minor improvements of some
of the theorems in \cite{OSS1}.
Theorem~\ref{thmcz}
(solutions for the case $c=0$, bifurcating from $(\lambda_1,0)$) and
Theorem~\ref{global} (with further information on the case $\lambda_1\leq a<\lambda_1+\delta$)
can be found in \cite{OSS1} (in the case $M=0$).
A big portion of the proof of Theorem~\ref{thmcz} is identical to the one of \cite[Theorem~2.5]{OSS1},
but for the uniqueness argument we do not use 
\cite[Lemma~3.3]{ABC}.
The statement of 
Theorem~\ref{big} (stable solutions of {\rm (\ref{a})}) enriches \cite[Theorem~3.2]{OSS1},
but the argument of the proof can be found in \cite{OSS1}.

Our main tools are bifurcation theory, the Morse indices, critical points at infinity
and the methods of elliptic equations. With regard to the first, the
simplest approach for our purposes is to make the best choice of
coordinates in each circumstance and then apply the Implicit Function
Theorem. This also allows us to simplify some of the original arguments
(namely in what concerns
\cite[Lemma~4.3]{OSS1}).

The organization of this paper is as follows.
In 
Section~\ref{noh} we consider stable
solutions of the equation with no harvesting.
In Section~\ref{sectiond} we consider
degenerate solutions with Morse index equal to zero.
In Section~\ref{four} we consider
stable solutions, solutions around a degenerate solution with Morse index equal to zero,
solutions around zero, and mountain pass solutions.
In Section~\ref{below_l2} we discuss
global bifurcation below $\lambda_2$ and prove Theorem~\ref{l1l2}.
In Section~\ref{at_l2} we discuss
global bifurcation at $\lambda_2$ and prove Theorem~\ref{thmatl2}.
In Section~\ref{pl2} we discuss
bifurcation in a right neighborhood of $\lambda_2$ and 
start the proof of Theorem~\ref{thm}. Finally,
in order to extend the curves obtained in Section~\ref{pl2}
for all negative values of $c$, and complete the proof of
Theorem~\ref{thm}, we need a somewhat delicate argument to prove
there are no degenerate solutions with Morse index equal to one at infinity
for $a<\lambda_2+\delta$. 
In Section~\ref{last} we carry it out.

We finish the Introduction with
a word of caution about our terminology. For simplicity,
we sometimes refer to a solution $(a,u,c)\in\R\times\h\times\R$ of (\ref{a}) in an abbreviated manner,
by $(c,u)$ when $a$ is fixed, or simply by $u$ when both $a$ and $c$ are fixed. And we may also refer to some property of a solution
$(a,u,c)$, like positivity, meaning the second component $u$ has that property.

\vspace{\baselineskip}

{\bf Acknowledgments.} 

The author is grateful to Hossein Tehrani for valuable discussions and helpful suggestions. 

The author studied 
\cite{OSS1} with
José Maria Gomes, whose insight he retains with appreciation.

\section{Stable solutions of the equation with no harvesting}\label{noh}

Throughout this section we assume $c=0$, so we consider the equation
\begin{equation}\label{b}
-\Delta u=au-f(u).
\end{equation}
Here the main result is
\begin{Thm}[$\C_\dagger$, solutions of {\rm (\ref{b})} bifurcating from $(\lambda_1,0)$]\label{thmcz}
Suppose $f$ satisfies {\rm\bf (i)}-{\rm\bf (iv)}.
The set of positive solutions $(a,u)$ of {\rm (\ref{b})} is a connected one dimensional manifold 
$\C_\dagger$ of class $C^1$ in $\R\times\h$.
The manifold is the union of the 
segment $\{(\lambda_1,t\phi):t\in\,]0,M]\}$ with a graph $\{(a,u_\dagger(a)):a\in\,]\lambda_1,+\infty[\}$.
The solutions are strictly increasing along $\C_\dagger$.
For $a>\lambda_1$ every positive solution is stable and, at each $a$, equation {\rm (\ref{b})} has no other stable solution besides $u_\dagger(a)$.
\end{Thm}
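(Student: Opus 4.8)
The plan is to combine bifurcation from a simple eigenvalue, the sub- and supersolution method, a Brezis--Oswald type uniqueness argument, and the Implicit Function Theorem, roughly in that order. First I would record the rigid structure forced by {\bf (ii)}: if $u>0$ solves {\rm (\ref{b})} and $u\leq M$ everywhere, then $f(u)\equiv 0$, so $-\Delta u=au$ and $u$ is a positive eigenfunction; since $\lambda_1$ is simple with eigenfunction $\phi$ and $\max_\Omega\phi=1$, this forces $a=\lambda_1$ and $u=t\phi$ with $t\in\,]0,M]$. This identifies the segment $\{(\lambda_1,t\phi):t\in\,]0,M]\}$ (a single point when $M=0$) and shows that for every $a>\lambda_1$ a positive solution must exceed $M$ on a set of positive measure.

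For existence I would use two ingredients. At $(\lambda_1,0)$ the linearization $-\Delta-a$ has a simple zero eigenvalue with positive eigenfunction $\phi$, so Crandall--Rabinowitz produces a local $C^1$ branch of positive solutions bifurcating (to the right, since solutions must grow past $M$) from $(\lambda_1,0)$, emanating from the top of the segment at $(\lambda_1,M\phi)$ when $M>0$. For global existence I would, for each fixed $a>\lambda_1$, exhibit an ordered pair of sub/supersolutions: $\underline u=\eps\phi$ is a subsolution for $\eps$ small (on $\{\eps\phi\leq M\}$ one only needs $\lambda_1\leq a$, and near $u=0$ one uses $f(u)=o(u)$), while any constant $\bar u=K$ with $f(K)\geq aK$ is a supersolution, such $K$ existing by the superlinearity {\bf (iv)}. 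Hence a positive solution exists for all $a>\lambda_1$.

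The heart of the matter is uniqueness, stability, and regularity, all of which I would extract from convexity {\bf (iii)}. Since $f(0)=0$ and $f'(M)=0$, a short computation gives that $u\mapsto f(u)/u$ is nondecreasing on $\,]0,\infty[$ and that $f'(u)u-f(u)=\int_M^u f''(s)\,s\,ds$ is strictly positive for $u>M$. Monotonicity of $f(u)/u$ makes $u\mapsto(au-f(u))/u$ decreasing, and strictly so wherever $u>M$, so a Brezis--Oswald/Picone argument gives at most one positive solution at each $a>\lambda_1$ (the strictness feeding the argument coming precisely from the region $\{u>M\}$); this is the step where I would avoid \cite[Lemma~3.3]{ABC}. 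For stability I would let $\mu_1(u)$ and $\varphi_1>0$ be the principal eigenpair of $L=-\Delta-a+f'(u)$, test $L\varphi_1=\mu_1(u)\varphi_1$ against $u$ and equation {\rm (\ref{b})} against $\varphi_1$, and subtract using Green's identity to obtain
\[
\mu_1(u)\int u\varphi_1=\int\bigl(f'(u)u-f(u)\bigr)\varphi_1.
\]
Because any solution with $a>\lambda_1$ exceeds $M$ somewhere, the right-hand side is strictly positive, so $\mu_1(u)>0$: every positive solution is nondegenerate and stable, and nondegeneracy lets the Implicit Function Theorem turn the solution set into the $C^1$ graph $\{(a,u_\dagger(a)):a\in\,]\lambda_1,+\infty[\}$.

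Finally I would obtain strict monotonicity and the global picture. Differentiating {\rm (\ref{b})} in $a$ gives $L\dot u_\dagger=u_\dagger>0$; since $\mu_1(u_\dagger)>0$ the operator $L$ obeys the maximum principle, whence $\dot u_\dagger>0$ and the solutions increase strictly along $\C_\dagger$. Uniqueness forces the local bifurcating branch and the globally constructed solutions to coincide, so $\C_\dagger$ is connected and is exactly the union of the segment with the graph; the stability statement follows since the only positive solution at each $a$ is $u_\dagger(a)$, while $u\equiv 0$ has principal eigenvalue $\lambda_1-a<0$ and is unstable. I expect the main obstacle to be the gluing at the corner $(\lambda_1,M\phi)$: checking that the vertical segment and the graph over $a$ fit together into a single $C^1$ manifold requires a careful local parametrization there (and, when $M=0$, a direct analysis of the bifurcation, where $f$ is merely $C^2$-flat at $0$ rather than identically zero), instead of a naive application of the Implicit Function Theorem in the coordinate $a$.
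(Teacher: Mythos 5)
Your proposal takes a genuinely different route from the paper on the two central steps, and most of it works. The paper never argues at fixed $a$: it continues the local branch of Lemma~\ref{tp} forward in the parameter $a$ by the Implicit Function Theorem (nondegeneracy supplied by stability), and it proves uniqueness of \emph{stable} solutions by running the same continuation backwards from an arbitrary stable solution down to $a=\lambda_1$, where a compactness argument and the classification of solutions at $\lambda_1$ force the branch to coincide with $\C_\dagger$ (Lemma~\ref{unique}). You instead work at each fixed $a>\lambda_1$: existence from the ordered pair of sub/supersolutions $\eps\phi\leq K$, and uniqueness of \emph{positive} solutions by a Brezis--Oswald/Picone argument whose equality case is killed by $f'(u)u-f(u)=\int_M^uf''(s)s\,ds>0$ for $u>M$ --- the same strict inequality the paper uses in Lemma~\ref{gM}. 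Both routes avoid \cite[Lemma~3.3]{ABC}; yours is more elementary and self-contained at fixed $a$, while the paper's continuation sets up machinery (following branches in $a$, compactness as $a\searrow\lambda_1$) that is reused almost verbatim for $\D_*$ and ${\cal M}^*$ later. Your stability test and the monotonicity argument ($L\dot u_\dagger=u_\dagger>0$ plus the maximum principle) are identical to the paper's. Two remarks on the corner $(\lambda_1,M\phi)$, which you rightly flag: first, Crandall--Rabinowitz at $(\lambda_1,0)$ does not ``emanate from the top of the segment'' --- when $M>0$ the branch it produces \emph{is} the segment, and the bending to the right happens at $(\lambda_1,M\phi)$, outside its scope; second, the paper's Lemma~\ref{tp} resolves this exactly as you suggest, applying the Implicit Function Theorem to $g(a,t,y)=\Delta\tpy+a\tpy-f\tpy$ and solving for $(a,y)\in\R\times\rr$ as $C^1$ functions of $t$, the linearization $(\alpha,z)\mapsto\alpha t_0\phi+\Delta z+\lambda_1 z$ being invertible for every $t_0\neq 0$, in particular at $t_0=M$. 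Since the $C^1$-manifold assertion is part of the statement, this step should be carried out, not left as a caveat.

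There is one genuine gap: the final claim that, at each $a>\lambda_1$, equation (\ref{b}) has \emph{no stable solution other than} $u_\dagger(a)$. You prove uniqueness among positive solutions and stability of positive solutions, and then dismiss only $u\equiv 0$; but a priori a stable solution need not be positive, and sign-changing solutions of (\ref{b}) really do occur in the range covered by the theorem (for $M>0$, any small multiple $t\psi$ of a second eigenfunction solves (\ref{b}) at $a=\lambda_2$; for $a>\lambda_2$ there are the solutions bifurcating from $(\lambda_2,0)$, cf.\ Proposition~\ref{tp2}), so they must be ruled out, not just the trivial solution. The missing step is the converse direction of the paper's Lemma~\ref{gM}: if $u$ is stable with first eigenvalue $\mu>0$, multiply (\ref{b}) by $u^-$ and integrate to get $\int|\nabla u^-|^2=a\int(u^-)^2$, using that $f(u)u^-\equiv 0$ because $f$ vanishes where $u<0\leq M$; since also $f'(u)(u^-)^2\equiv 0$ on that set, the stability form gives
$$
0<\mu\int(u^-)^2\leq\int|\nabla u^-|^2-a\int(u^-)^2+\int f'(u)(u^-)^2=0
$$
unless $u^-\equiv 0$. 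Hence a stable solution is nonnegative; as in Lemma~\ref{gM} its maximum must exceed $M$ (otherwise it solves $\Delta u+au=0$, forcing $a=\lambda_1$ and degeneracy), and then the strong maximum principle makes it positive, so your fixed-$a$ uniqueness applies. With this short addition, and the corner parametrization made explicit, your proof covers the full statement.
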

\begin{Rmk}
 In {\rm Theorem~\ref{thmcz}} we may relax assumption {\bf (iii)} to
\begin{itemize}
 \item[{\bf (iii)$'$}] $u\mapsto\frac{f(u)}{u}$ is increasing (not necessarily strictly).
\end{itemize}
\end{Rmk}
We define
$$
\textstyle\rr:=\bigl\{y\in\h:\int y\phi=0\bigr\}.
$$
\begin{Lem}[Initial portion of $\C_\dagger$]\label{tp}  Suppose $M>0$.
There exists $\delta>0$ and $C^1$ functions $\A_\dagger:J\to\R$ and $\y_\dagger:J\to\rr$,
where 
$J=]-\infty, 
M+\delta[$, such that 
the map $t\mapsto(\A_\dagger(t),t\phi+\y_\dagger(t))$, defined in $J$, 
with $\A_\dagger(t)=\lambda_1$ and $\y_\dagger(t)= 0$ for $t\in\,]-\infty,
M]$,
parametrizes a curve $\C_\dagger$ of solutions of~{\rm (\ref{b})}.
There exists a neighborhood of $\C_\dagger\setminus\{(\lambda_1,0)\}$
in $\R\times\h$ such that the solutions of~{\rm (\ref{b})} in this neighborhood lie on $\C_\dagger$.
\end{Lem}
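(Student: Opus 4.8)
The plan is to introduce the coordinates $u=t\phi+\y$ with $\y\in\rr$ and to solve the equation for the pair $(a,\y)$ as a function of the scalar $t$ by the Implicit Function Theorem, the value $t=M$ being the only delicate one. First I would record the explicit piece of the curve: since $0\le\phi\le 1$ in $\Omega$, for every $t\le M$ one has $t\phi\le M$ pointwise, hence $f(t\phi)\equiv 0$ and $-\Delta(t\phi)=\lambda_1 t\phi=\lambda_1 t\phi-f(t\phi)$, so $(\lambda_1,t\phi)$ solves (\ref{b}). This forces $\A_\dagger\equiv\lambda_1$ and $\y_\dagger\equiv 0$ on $]-\infty,M]$. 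To capture nearby solutions and to continue past $M$, I would perform the Lyapunov–Schmidt splitting $L^p(\Omega)=\R\phi\oplus L^p_\phi$, where $L^p_\phi=\{w\in L^p(\Omega):\int w\phi=0\}$ and $Q$ is the $L^2$-orthogonal projection onto $L^p_\phi$. Since $\int(-\Delta \y)\phi=\lambda_1\int \y\phi=0$ for $\y\in\rr$, the equation (\ref{b}) is equivalent to $G(t,a,\y)=0$, where
$$G(t,a,\y)=\Bigl(-\Delta \y-a\y+Q\,f(t\phi+\y),\ (\lambda_1-a)\,t\!\int\!\phi^2+\int f(t\phi+\y)\,\phi\Bigr)\in L^p_\phi\times\R .$$

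Because $p>N$ gives $\h\hookrightarrow C(\overline\Omega)$ and $f\in C^2$, the Nemytskii map $u\mapsto f(u)$ is $C^1$ from $\h$ into $L^p(\Omega)$, so $G$ is a $C^1$ map $\R\times\R\times\rr\to L^p_\phi\times\R$ whose zeros are exactly the solutions of (\ref{b}) written in the coordinates $(t,a,\y)$. The heart of the argument is the Implicit Function Theorem applied at $(t,a,\y)=(M,\lambda_1,0)$. Here the decisive simplification is that $M\phi\le M$ gives $f'(M\phi)\equiv 0$, so $D_{(a,\y)}G(M,\lambda_1,0)$ sends $(\dot a,v)$ to $\bigl(-\Delta v-\lambda_1 v,\ -M\bigl(\int\phi^2\bigr)\dot a\bigr)$. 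This is an isomorphism of $\R\times\rr$ onto $L^p_\phi\times\R$: the scalar block is invertible because $M>0$, and $-\Delta-\lambda_1$ restricts to an isomorphism $\rr\to L^p_\phi$ by the Fredholm alternative, its kernel $\R\phi$ being transverse to $\rr$ and its range being exactly $L^p_\phi$. The Implicit Function Theorem then yields $\delta>0$ and $C^1$ maps $\A_\dagger,\y_\dagger$ on $]M-\delta,M+\delta[$ solving $G=0$, together with local uniqueness of the zeros of $G$.

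Finally I would glue the two descriptions and prove the last assertion. On $]M-\delta,M]$ the explicit solution $(\lambda_1,0)$ must agree with the branch produced by the Implicit Function Theorem, by local uniqueness; hence setting $\A_\dagger\equiv\lambda_1$, $\y_\dagger\equiv 0$ on $]-\infty,M]$ extends the branch to a $C^1$ parametrization over $J=]-\infty,M+\delta[$, and $t\mapsto t\phi+\y_\dagger(t)$ is injective since $\int u\phi=t\int\phi^2$. For the neighborhood statement I would verify that $D_{(a,\y)}G$ stays an isomorphism everywhere along the curve except at $t=0$: for $t\le M$ with $t\ne 0$ the same block computation applies, invertibility reducing to $t\int\phi^2\ne 0$, and for $t\in\,]M,M+\delta[$ it persists by continuity after shrinking $\delta$. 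At $t=0$, i.e.\ the point $(\lambda_1,0)$, the scalar block collapses, which is precisely why this point must be excluded. Applying the Implicit Function Theorem at each remaining point and transporting the resulting neighborhoods through the linear homeomorphism $(a,u)\leftrightarrow(a,t,\y)$, the union of these neighborhoods is the required neighborhood of $\C_\dagger\setminus\{(\lambda_1,0)\}$ in which every solution of (\ref{b}) lies on $\C_\dagger$.

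The step I expect to be the main obstacle is conceptual rather than computational: along the segment every solution $t\phi$ is degenerate at fixed $a$, since its linearization $-\Delta-\lambda_1+f'(t\phi)=-\Delta-\lambda_1$ has kernel $\R\phi$, so a naive use of the Implicit Function Theorem in $u$ alone cannot work. Choosing the coordinates $u=t\phi+\y$ transfers this one-dimensional degeneracy to the $(t,a)$ directions and makes $D_{(a,\y)}G$ invertible for all $t\ne 0$; the two facts that make the scheme succeed are the identity $f'(M\phi)\equiv 0$ and the Fredholm splitting of $-\Delta-\lambda_1$.
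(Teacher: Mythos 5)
Your proposal is correct and takes essentially the same route as the paper: the paper's own proof also writes $u=t\phi+y$ with $y\in\rr$ and applies the Implicit Function Theorem to solve for $(a,y)$ as a function of $t$ along the segment, the key points being $f'(t_0\phi)\equiv 0$ for $t_0\leq M$ and the Fredholm alternative for $\Delta+\lambda_1$, with local uniqueness at each $t_0\neq 0$ giving the neighborhood statement. Your explicit splitting of the target into $\R\phi\oplus L^p_\phi$ is only a bookkeeping variant of the paper's verification that $(\alpha,z)\mapsto \alpha t_0\phi+\Delta z+\lambda_1 z$ is a bijection of $\R\times\rr$ onto $L^p(\Omega)$ (multiplying by $\phi$ and integrating kills $\alpha$, then $z=0$ follows on the complement).
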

\begin{proof}[Sketch of the proof]\footnote{For the full proof see the Appendix~\ref{appendix}.}
 Apply the Implicit Function Theorem to the function $g:\R^2\times\rr\to L^p(\Omega)$,
defined by
$$
g(a,t,y)=\Delta\tpy+a\tpy-f\tpy
$$
at $(\lambda_1,t_0,0)$ with $t_0\in\,]-\infty,0[\,\cup\,]0,M]$.
\end{proof}
Of course the line $\lin\subset\R\times\h$ parametrized by $a\mapsto (a,0)$ is also a curve of solutions
of (\ref{b}). From the classical paper \cite[Theorem~1.7]{CR} we know that $(\lambda_1,0)$ is
a bifurcation point.  We deduce that the statement of
Lemma~\ref{tp} also holds when $M=0$. In both cases, $M>0$ and $M=0$, in a neighborhood of 
$(\lambda_1,0)$ the solutions of~(\ref{b}) lie on $\lin\cup\C_\dagger$.

Let $\delta$ be as in Lemma~\ref{tp} and $u_\dagger(t)=t\phi+\y_\dagger(t)$. Reducing $\delta$ if necessary,
we may assume $u_\dagger(t)>0$ and $\max_\Omega u_\dagger(t)>M$ for $t\in\,]M,M+\delta[$. Indeed,
\begin{equation}\label{texists}
u_\dagger(t)=M\phi+(t-M)\phi+o(|t-M|),
\end{equation} as $y_\dagger'(M)= 0$, and $\phi$ has negative exterior normal 
derivative on $\partial\Omega$.
It follows $$\A_\dagger(t)>\lambda_1\ \ {\rm if}\ t\in\,]M,M+\delta[.$$ To see this, we multiply both
sides of (\ref{b}) by $\phi$ and integrate over $\Omega$,
\begin{equation}\label{al}
(a-\lambda_1)\int u\phi=\int f(u)\phi.
\end{equation}
Notice $f(u_\dagger(t))\not\equiv 0$ because $\max_\Omega u_\dagger(t)>M$.

Let $u$ be a solution of (\ref{b}), $\mu$ be the smallest eigenvalue of the linearized problem at $u$
and $v\in\h$ be a corresponding eigenfunction. So
\begin{equation}\label{linear}
-(\Delta v+av-f'(u)v)=\mu v.
\end{equation}
We recall that either $v$ or $-v$ is strictly positive everywhere in $\Omega$. 
The solution $u$ is said to be 
stable if the smallest eigenvalue of the linearized problem is positive.
The Morse index of the solution $u$ is the number of negative eigenvalues of the 
linearized problem at $u$. The solution $u$ is said to be 
degenerate if one of the eigenvalues of the linearized problem is equal to zero.
Otherwise it is called nondegenerate.
[These definitions also hold for solutions of (\ref{a})].
\begin{Lem}[Necessary and sufficient condition for the stability of solutions of (\ref{b})]\label{gM}
A solution of {\rm (\ref{b})} is stable iff it is a nonnegative solution of {\rm (\ref{b})} with maximum strictly greater than $M$.
\end{Lem}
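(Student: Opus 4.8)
The plan is to phrase stability through the principal eigenvalue $\mu_1=\mu_1(u)$ of the linearized problem~(\ref{linear}), with its sign-definite principal eigenfunction $v$ normalized so that $v>0$ in $\Omega$, and to use the variational characterization
$$
\mu_1(u)=\min_{0\neq w\in\h}\frac{\int|\nabla w|^2-a\int w^2+\int f'(u)w^2}{\int w^2}.
$$
Stability of $u$ means $\mu_1(u)>0$. The computational heart of the argument is to apply the linearized operator $L_u w:=-\Delta w-aw+f'(u)w$ to $u$ itself: since $u$ solves~(\ref{b}), $L_u u=f'(u)u-f(u)=:g(u)$. Convexity of $f$ together with $f\equiv 0$ on $(-\infty,M]$ makes the sign of $g$ transparent, because $g'(s)=f''(s)s$, so $g(s)=0$ for $s\leq M$ while $g(s)=\int_M^s f''(\tau)\tau\,d\tau>0$ for $s>M$ (strict, since $f(M)=f'(M)=0$ and $f(s)>0$ force $f''>0$ on a set of positive measure). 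Hence $g(u)\geq 0$ pointwise, with $g(u)>0$ exactly on $\{u>M\}$; recall $u\in C^1(\bar\Omega)$ as $p>N$, so this set is open.

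For the implication \emph{nonnegative with $\max_\Omega u>M$ $\Rightarrow$ stable}, I would pair $L_u u=g(u)$ with $v$ and use self-adjointness of $L_u$ on $\h$:
$$
\mu_1\int uv=\int u\,L_u v=\int(L_u u)\,v=\int g(u)\,v.
$$
Here $v>0$ and $u\geq 0$, $u\not\equiv 0$ give $\int uv>0$, while $g(u)\geq 0$ with $g(u)>0$ on the nonempty open set $\{u>M\}$ gives $\int g(u)v>0$; therefore $\mu_1>0$. (Equivalently, one may note that $u>0$ is the principal eigenfunction, of eigenvalue $0$, of $-\Delta-a+\frac{f(u)}{u}$, and that $f'(u)-\frac{f(u)}{u}=\frac{g(u)}{u}\geq 0$ raises the potential, so monotonicity of the principal eigenvalue gives $\mu_1\geq 0$, strictly because $g(u)>0$ on $\{u>M\}$.)

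For the converse I would argue by contraposition, showing $\mu_1\leq 0$ for any solution that is not nonnegative with maximum above $M$, splitting on $\max_\Omega u$. If $\max_\Omega u\leq M$ then $f(u)\equiv f'(u)\equiv 0$, so $L_u=-\Delta-a$ and $\mu_1=\lambda_1-a\leq 0$ in the regime $a\geq\lambda_1$ relevant to this section. If instead $\max_\Omega u>M$ but $u<0$ somewhere, I would insert the negative part $w:=u^-\in\h$ into the Rayleigh quotient: on $\Omega^-:=\{x\in\Omega:u(x)<0\}$ one has $f(u)=0$, whence $f'(u)w^2\equiv 0$ and $-\Delta u=au$ there; multiplying by $u$ and integrating over $\Omega^-$, where the boundary term vanishes because $u=0$ on $\partial\Omega^-$, yields $\int_{\Omega^-}|\nabla u|^2=a\int_{\Omega^-}u^2$, so the numerator of the quotient at $w$ equals $\int_{\Omega^-}|\nabla u|^2-a\int_{\Omega^-}u^2=0$. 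Thus $\mu_1\leq 0$, and $u$ is not stable. These two cases exhaust the complement of the ``nonnegative, $\max>M$'' class.

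The step I expect to be the main obstacle is this converse sign-changing case: the key is to recognize that the linearization decouples on $\Omega^-$, where $f$ and $f'$ vanish and $u$ is itself a Dirichlet eigenfunction of $-\Delta$, and that testing with $u^-$ captures this cleanly, avoiding any appeal to Hopf's lemma or to domain-monotonicity of eigenvalues; the only care needed is the admissibility $u^-\in\h$ and the vanishing of the boundary term on $\partial\Omega^-$. A secondary point worth stating explicitly is that the hypothesis $a\geq\lambda_1$ is genuinely used in the $\max_\Omega u\leq M$ case: for $a<\lambda_1$ the unique solution is $u\equiv 0$ (testing~(\ref{b}) with $u$ gives $(\lambda_1-a)\int u^2\leq\int|\nabla u|^2-a\int u^2=-\int f(u)u\leq 0$), which is stable yet has maximum $0\leq M$, so the equivalence holds only for $a\geq\lambda_1$.
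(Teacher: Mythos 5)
Your proof is correct and is essentially the paper's: the forward direction rests on the same identity $\mu\int uv=\int(f'(u)u-f(u))v$, obtained by pairing the equation with $v$ and the linearization with $u$, and the converse rests on the same computation with the test function $u^-$, giving $\int|\nabla u^-|^2=a\int(u^-)^2$ and hence a nonpositive Rayleigh quotient (the paper runs this directly from stability to conclude $u^-=0$, rather than by contraposition, but the identity is identical). Your closing caveat is also accurate: the equivalence genuinely requires $a\geq\lambda_1$, since for $a<\lambda_1$ the solution $u\equiv 0$ is stable with maximum $\leq M$; the paper's step ``this implies $a=\lambda_1$'' tacitly assumes $u\not\equiv 0$, which is harmless there because the lemma is only invoked for $a\geq\lambda_1$.
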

\begin{proof}
Suppose $u$ is a nonnegative solution of (\ref{b}) whose maximum is strictly greater than $M$.
 If we multiply both sides of (\ref{b}) by $v$, both sides of (\ref{linear}) by $u$,
subtract and integrate over $\Omega$, we obtain
$$
\int(f'(u)u-f(u))v=\mu\int uv.
$$
The hypotheses imply $f'(u)u-f(u)\geq 0$. On the other hand $f'(u)u-f(u)\not\equiv 0$ because
$\max_\Omega u>M$, $f(u)=0$ for $u\leq M$, $f(u)>0$ for $u>M$, $f$ is $C^2$ and $u=0$ on
$\partial\Omega$.
Since $v$ is strictly positive on $\Omega$ and $u$ is nonnegative, $\mu>0$.

Conversely, suppose $u$ is a stable solution of (\ref{b}).
We multiply 
both sides of (\ref{b}) by $u^-$ and integrate over $\Omega$ to obtain
$$
\int|\nabla u^-|^2=a\int (u^-)^2.
$$
Using this equality,
$$
\mu\int (u^-)^2\leq \int|\nabla u^-|^2-a\int (u^-)^2+\int f'(u)(u^-)^2=0,
$$
or $u^-=0$, because $\mu>0$. So $u$ is nonnegative. If $\max_\Omega u\leq M$,
then $u$ satisfies $\Delta u+au=0$. This implies $a=\lambda_1$. But then $u$
is not stable, it is degenerate. Therefore $\max_\Omega u>M$.
\end{proof}
\begin{proof}[Proof of {\rm Theorem~\ref{thmcz}}]
Consider the function $F:\R\times\h\to L^p(\Omega)$,
defined by
$$
F(a,u)=\Delta u+au-f(u).
$$
We know $F(\A_\dagger(t),u_\dagger(t))=0$, in particular for $t\in\,]M,M+\delta[$.
Since for these values of $t$ the functions $u_\dagger(t)$ are positive with
maxima strictly greater than $M$, and hence nondegenerate,
by the Implicit Function Theorem,
in a neighborhood of $(\A_\dagger(t),u_\dagger(t))$, the solutions of $F(a,u)=0$ in $\R\times\h$ may also
be written in the form $(a,u_\dagger(a))$. 
Of course, in rigor $u_\dagger(t)$ should be called something else like
$\tilde{u}_\dagger(t)$, as it is not $u_\dagger(a)$ but rather $\tilde{u}_\dagger(t)=
u_\dagger(a_\dagger(t))$.
However, there is no risk of confusion.

Using the argument in \cite[proof of Theorem~2.5]{OSS1}, one may extend $\C_\dagger$ of Lemma~\ref{tp}
with a curve (still called $\C_\dagger$) of solutions $(a,u_\dagger(a))$
of (\ref{b}) defined for $a\in\,]\lambda_1,+\infty[$. By the maximum principle, $u_\dagger'(a)>0$.
Let $K_a$ satisfy
\begin{equation}\label{ka}
 K_a>0\quad{\rm and}\quad aK_a-f(K_a)=0.
\end{equation} 
Note $au-f(u)\leq 0$ for $u>K_a$ because the hypotheses imply $u\mapsto f(u)/u$
is increasing. It is well known the maximum principle implies $u_\dagger(a)\leq K_a$
and then Hopf's Lemma \cite[Lemma~3.4]{GT} implies $u_\dagger(a)<K_a$.
This will be used in the proof of Proposition~\ref{p2}.

Let
\begin{equation}\label{t}
\ttt(a):=\textstyle\frac{\int u_\dagger(a)\phi}{\int\phi^2}. 
\end{equation}
Clearly, $\lim_{a\searrow\lambda_1}\ttt(a)=M$. We remark that $\ttt$
is strictly increasing along $\C_\dagger$ as 
$$
 \textstyle\ttt'(a)=\frac{\int u_\dagger'(a)\phi}{\int\phi^2}>0.
$$
The next lemma completes the proof of
Theorem~\ref{thmcz}.\end{proof}

\begin{Lem}[Uniqueness of stable solutions of (\ref{b})]\label{unique}
 For each $a\in\,]\lambda_1,+\infty[$, $u_\dagger(a)$ is the unique stable solution of equation~{\rm (\ref{b})}.
\end{Lem}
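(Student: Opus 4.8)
The plan is to prove that \emph{any} two stable solutions of (\ref{b}) at a fixed $a>\lambda_1$ coincide; since $u_\dagger(a)$ is itself stable for such $a$ (it is positive with $\max_\Omega u_\dagger(a)>M$, so Lemma~\ref{gM} applies), this identifies it as the unique one. So let $u_1,u_2$ be stable solutions. By Lemma~\ref{gM} both are nonnegative with maximum exceeding $M$, and the strong maximum principle (applied to $(-\Delta+k)u_i=(a+k)u_i-f(u_i)\ge 0$ for $k$ large) gives $u_1,u_2>0$ in $\Omega$. Writing $h(s):=a-f(s)/s$, each equation reads $-\Delta u_i=h(u_i)\,u_i$; note $h$ is bounded on the range of the $u_i$, since $f(s)/s\to 0$ as $s\searrow 0$, and $h$ is nonincreasing because $s\mapsto f(s)/s$ is increasing.

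First I would run the Picone/Díaz--Saa computation. The test function $u_j^2/u_i$ lies in $\h\cap H^1_0(\Omega)$ (legitimate because $u_i\in C^1(\bar\Omega)$, $u_i>0$ in $\Omega$, and $\partial_\nu u_i<0$ on $\partial\Omega$ by Hopf's Lemma), so Picone's identity integrated over $\Omega$ yields $\int|\nabla u_j|^2\ge\int h(u_i)u_j^2$. Combining this with $\int|\nabla u_i|^2=\int h(u_i)u_i^2$ and symmetrizing gives the Díaz--Saa inequality
\[
\int_\Omega\bigl(h(u_1)-h(u_2)\bigr)\bigl(u_1^2-u_2^2\bigr)\ \ge\ 0 .
\]
On the other hand, the monotonicity of $h$ makes the integrand pointwise $\le 0$ (the factors $u_1^2-u_2^2$ and $u_1-u_2$ share their sign). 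Hence the integrand vanishes a.e., and at every point either $u_1=u_2$ or $h(u_1)=h(u_2)$; in either case $h(u_1)=h(u_2)$ a.e.\ in $\Omega$.

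Consequently both $u_1$ and $u_2$ are positive solutions of the same linear Dirichlet problem $-\Delta w=Vw$ with $V:=h(u_1)\in L^\infty(\Omega)$. A positive eigenfunction must belong to the principal eigenvalue, so $0$ is the principal eigenvalue of $-\Delta-V$; since it is simple, $u_2=\gamma u_1$ for some $\gamma>0$. Substituting into (\ref{b}) forces $f(\gamma t)=\gamma f(t)$ for $t$ in the range of $u_1$. For $t$ in the nonempty open interval $]M,\max_\Omega u_1]$ we have $f(t)>0$, while $f$ is convex with $f\equiv 0$ on $[0,M]$, so $f$ is not affine on $[0,t]$; convexity then yields $f(\gamma t)<\gamma f(t)$ when $\gamma<1$ and $f(\gamma t)>\gamma f(t)$ when $\gamma>1$, contradicting the identity unless $\gamma=1$. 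Therefore $u_1=u_2$.

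The delicate point, and the main obstacle, is that $s\mapsto f(s)/s$ is flat (identically $0$) on $[0,M]$ and $f$ need not be strictly convex, so the Díaz--Saa equality does \emph{not} by itself pin down $u_1=u_2$ on the region where the solutions are small. The device that closes this gap is to first extract exact proportionality $u_2=\gamma u_1$ from the simplicity of the principal eigenvalue, and only then use convexity together with the superlinear growth~{\bf (iv)} (which prevents $f$ from being affine where it is positive) to eliminate $\gamma\neq 1$; this is precisely the step that lets us dispense with \cite[Lemma~3.3]{ABC}.
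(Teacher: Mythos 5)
Your proof is correct, and it takes a genuinely different route from the paper's. The paper argues by continuation: given an arbitrary stable solution $\tua$, it follows the branch backwards in $a$ via the Implicit Function Theorem, proves stability cannot be lost before $a$ reaches $\lambda_1$ (a compactness argument: a limiting first eigenfunction $w_0>0$ of the linearization at a limit solution $u_0\leq M$ forces $a_0=\lambda_1$), extracts an $\h$-limit as $a_n\searrow\lambda_1$ via \cite[Lemma~9.17]{GT}, and concludes from (\ref{al}) and Lemma~\ref{tp} that the only branches reaching $a=\lambda_1$ are $\C_\dagger$ and the trivial line $\lin$, whence $\tua=u_\dagger(a)$. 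You instead work at fixed $a$: the Picone/D\'{\i}az--Saa inequality gives $f(u_1)/u_1=f(u_2)/u_2$ a.e.; simplicity of the principal eigenvalue of $-\Delta-V$, with $V=a-f(u_1)/u_1\in L^\infty(\Omega)$, upgrades this to proportionality $u_2=\gamma u_1$; and convexity of $f$ plus $f'(0)=0$ (forced by {\bf (i)}-{\bf (ii)} since $f\equiv 0$ on $(-\infty,M]$ and $M\geq 0$: equality $f(\gamma t)=\gamma f(t)$ at some $t$ with $f(t)>0$ would make $f$ linear with positive slope on $[0,t]$) excludes $\gamma\neq 1$. Your two-step device is exactly what the flatness of $s\mapsto f(s)/s$ on $[0,M]$ demands --- the classical Brezis--Oswald/D\'{\i}az--Saa equality alone cannot separate two solutions on the region where both lie below $M$ --- and, like the paper's proof, it avoids \cite[Lemma~3.3]{ABC}, on which \cite{OSS1} relied. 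What your approach buys is a self-contained, purely elliptic argument, local in $a$, which in fact proves the stronger statement that (\ref{b}) has at most one nonnegative solution with maximum exceeding $M$. What the paper's longer route buys is that it reuses the continuation-and-compactness machinery needed elsewhere anyway (Theorem~\ref{thmd}, Theorem~\ref{big}), and it exhibits how every stable solution is swept into the curve $\C_\dagger$ as $a$ decreases, which is the global information the bifurcation diagrams are built on.
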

\begin{proof} 
As mentioned in the Introduction,
the argument below gives a direct
proof of the uniqueness part of Theorem~2.5 of~\cite{OSS1}, which does not use
\cite[Lemma~3.3]{ABC}. The emphasis in \cite{OSS1} is on uniqueness of positive solutions
and here is on uniqueness of stable solutions. Of course, these are the same via Lemma~\ref{gM}.

 Let $\tua$ satisfy $F(a,\tua)=0$ and be stable. 
 We proved in Lemma~\ref{gM} $\tua$ is nonnegative with maximum strictly greater than $M$.
Let us prove
we may use the Implicit Function Theorem to follow the solution $\tua$ as $a$ decreases all the way down to $\lambda_1$.
The solutions $\tua$ will not blow up to $+\infty$ as $\tua<K_a$. And
they will remain bounded below by zero as long as they remain stable.
We now show the solutions $\tua$ will remain
stable for $a>\lambda_1$.  
Suppose $a_n\searrow a_0>\lambda_1$ and $\tu(a_n)$ are stable solutions.
Multiplying both sides of (\ref{b}) by $\tu(a_n)$ and integrating over $\Omega$,
\begin{equation}\label{bdd}
\int|\nabla \tu(a_n)|^2=a_n\int(\tu(a_n))^2-\int f(\tu(a_n))\tu(a_n).
\end{equation}
The sequence $(\tu(a_n))$ is bounded in $L^\infty(\Omega)$ and in $H^1_0(\Omega)$. We may assume 
$\tu(a_n)\to u_0$ in $H^1(\Omega)$,
$\tu(a_n)\to u_0$ in $L^2(\Omega)$ and $\tu(a_n)\to u_0$
a.e.\ in $\Omega$. 
One easily sees
the function $u_0$ is a nonnegative solution of (\ref{b}).
If $u_0$ is not stable it
must be less than or equal to $M$. 

Let $w(a_n)$ be a first eigenfunction of the linearized equation at $\tu(a_n)$, 
$$\Delta w(a_n)+a_nw(a_n)-f'(\tu(a_n))w(a_n)=-\mu(a_n)w(a_n),$$
normalized so $\int [w(a_n)]^2=\int\phi^2$. The sequence $(w(a_n))$ is bounded in $H^1_0(\Omega)$. So we may also assume
$w(a_n)\weak w_0$ in $H^1_0(\Omega)$, $w(a_n)\to w_0$ in $L^2(\Omega)$ and
$w(a_n)\to w_0$ a.e.\ in $\Omega$.
Moreover, by the Dominated Convergence Theorem $\int f'(\tu(a_n))w_0\to
\int f'(u_0)w_0$. Since, by the Implicit Function Theorem, $(\mu(a_n))$ must decrease to zero if $u_0$ is not stable,
$$ 
\Delta w_0+a_0w_0-f'(u_0)w_0=0,
$$ 
with $w_0\geq 0$. In fact with $w_0>0$.
But $u_0\leq M$ so 
$a_0=\lambda_1$.
This completes the proof that we can follow the branch
$(a,\tua)$ all the way down to $\lambda_1$.

Let $a_n\searrow\lambda_1$. 
The sequence $(\tu(a_n))$ is uniformly bounded. 
The fact $\tu(a_n)$ satisfy (\ref{b}) and \cite[Lemma~9.17]{GT} imply the norms
$\|\tu(a_n)\|_{\h}$ are uniformly bounded. 
Thus $(\tu(a_n))$ has a strongly convergent subsequence in $L^p(\Omega)$.
Subtracting equations (\ref{b}) for $\tu(a_n)$ and $\tu(a_m)$ and using \cite[Lemma~9.17]{GT},
$(\tu(a_n))$ has a subsequence which is strongly convergent in $\h$.

It is easy to see using (\ref{al}) the solutions of $F(\lambda_1,u)=0$ are $u=t\phi$ where $t\in\,]-\infty,M]$,
as for those solutions $\int f(u)\phi=0$,
and we already
showed the only branches of solutions of (\ref{b}) reaching $a=\lambda_1$ are $\C_\dagger$ and
the line $\lin$.
This implies $\tu(a)=u_\dagger(a)$ and completes the proof of the lemma.
\end{proof}

\section{Degenerate solutions with Morse index\\ equal to zero}\label{sectiond}

We now turn to (\ref{a}). 
In Theorem~\ref{thmd} we may relax assumption {\bf (a)} to
\begin{enumerate}[{\bf (a)}]
\item[{\bf (a)$'$}] $h\in L^p(\Omega)$.
\end{enumerate}
Here our main result is 
\begin{Thm}[$\D_*$, degenerate solutions with Morse index equal to zero]\label{thmd}
Suppose $f$ satisfies {\rm\bf (i)}-{\rm\bf (iv)}
and $h$ satisfies {\rm\bf (a)$'$}, {\rm\bf (b)}.
The set of degenerate solutions $(a,u,c)$ of {\rm (\ref{a})} with Morse index equal to zero
is a connected one dimensional manifold $\D_*$ of class $C^1$ in $\R\times\h\times\R$.
The manifold is the union of the half line $\{(\lambda_1,t\phi,0):t\in\,]-\infty,M]\}$
with a graph $\{(a,u_*(a),\cc_*(a)):a\in\,]\lambda_1,+\infty[\}$.
The function $\cc_*$ is nonnegative.
\end{Thm}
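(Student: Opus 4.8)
The plan is to realize $\D_*$ as the zero set of one $C^1$ map and then read off its structure from the Implicit Function Theorem, in the spirit of Lemma~\ref{tp}. A triple $(a,u,c)$ is a degenerate solution of Morse index zero exactly when $u$ solves~(\ref{a}) and zero is the \emph{smallest} eigenvalue of the linearization; since a strictly positive eigenfunction is necessarily a principal one, this is equivalent to the existence of some $v>0$ with $\Delta v+av-f'(u)v=0$. I would therefore adjoin $v$ as an unknown and consider the $C^1$ map $\R\times\h\times\R\times\h\to L^p(\Omega)\times L^p(\Omega)\times\R$,
$$
H(a,u,c,v)=\Bigl(\Delta u+au-f(u)-ch,\ \ \Delta v+av-f'(u)v,\ \ \textstyle\int v\phi-\int\phi^2\Bigr),
$$
the last slot normalizing $v$ (legitimate since $v,\phi>0$ give $\int v\phi>0$). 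On the open region $\{v>0\}$ the zeros of $H$ are exactly the degenerate Morse-index-zero solutions, and $v$ is unique. A first observation is that $a>\lambda_1$ forces $\max_\Omega u>M$: if $u\le M$ everywhere then $f'(u)\equiv0$, the eigenfunction equation becomes $-\Delta v=av$ with $v>0$, and hence $a=\lambda_1$.

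Next I would compute $D_{(u,c,v)}H$ and show it is an isomorphism whenever $a>\lambda_1$. Writing $L=\Delta+a-f'(u)$, the derivative sends $(\hat u,\hat c,\hat v)$ to $(L\hat u-\hat c h,\,-f''(u)v\,\hat u+L\hat v,\,\int\hat v\phi)$. At a degenerate point $L$ is Fredholm of index zero with kernel $\langle v\rangle$ and range $\{w:\int wv=0\}$, so both injectivity and surjectivity reduce to three scalar nondegeneracy conditions: $\int hv\neq0$, $\int v\phi\neq0$ and $\int f''(u)v^3\neq0$. The first two are immediate from $v,\phi>0$ and $h\ge0$, $h\not\equiv0$. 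The third is the crux: because $f\in C^2$ is convex with $f\equiv0$ on $(-\infty,M]$ and $f>0$ on $(M,+\infty)$, $f''$ cannot vanish on any interval to the right of $M$; combined with $\max_\Omega u>M$, the intermediate value theorem yields an open set on which $f''(u)>0$, whence $\int f''(u)v^3>0$. The Implicit Function Theorem then presents $\D_*$, near any point with $a>\lambda_1$, as the $C^1$ graph $a\mapsto(a,u_*(a),c_*(a),v(a))$; forgetting $v$ gives the asserted graph in $\R\times\h\times\R$.

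The sign of $c_*$ follows from a Green-type identity: testing~(\ref{a}) against $v$ and the eigenfunction equation against $u$, subtracting and integrating gives $c_*\int hv=\int(f'(u)u-f(u))v$. Convexity of $f$ with $f(0)=0$ makes $f'(u)u-f(u)\ge0$ pointwise, and $\int hv>0$, so $c_*\ge0$ (with $c_*>0$ precisely when $\max_\Omega u>M$). At $a=\lambda_1$ the degeneracy is rigid: testing $\Delta v+\lambda_1v-f'(u)v=0$ against $\phi$ gives $\int f'(u)v\phi=0$, so $f'(u)\equiv0$ and $u\le M$; testing~(\ref{a}) against $\phi$ then forces $c=0$, and the equation reduces to $\Delta u+\lambda_1u=0$, i.e.\ $u=t\phi$ with $t\le M$. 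These are degenerate of Morse index zero (eigenfunction $\phi$), and they constitute the half line.

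The main obstacle is the junction $(\lambda_1,M\phi,0)$ and the global extension. At the junction $\int f''(u)v^3=0$, so the parametrization by $a$ breaks down; here I would switch to the coordinate $t=\int u\phi/\int\phi^2$, write $u=t\phi+y$ with $y\in\rr$, and rerun the Implicit Function Theorem as in Lemma~\ref{tp}, producing a $C^1$ parametrization $t\mapsto(a(t),u_*(t),c_*(t))$ with $a(t)=\lambda_1$, $c(t)=0$ for $t\le M$ and $a'(M)=0$, which glues the half line to the graph in a $C^1$ fashion. Finally, extending the branch to all of $(\lambda_1,+\infty)$ is a continuation argument: the isomorphism persists as long as $a>\lambda_1$, the maximum principle together with the superlinearity of $f$ (as in~(\ref{ka})) bounds $u_*$ from above, and a priori control of the sign-changing part---in the spirit of the ``no degenerate solutions at infinity'' analysis---prevents escape, so that the branch covers $(\lambda_1,+\infty)$ and $\D_*$ is connected.
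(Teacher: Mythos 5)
Your local analysis is sound and follows essentially the same route as the paper: you adjoin the principal eigenfunction as an unknown, apply the Implicit Function Theorem with $a$ as parameter away from the junction, and switch to the coordinate $t=\int u\phi/\int\phi^2$ at $(\lambda_1,M\phi,0)$, exactly as in Lemma~\ref{curve} and the first half of the paper's proof. Your packaging is slightly different and rather clean: the linear normalization $\int v\phi=\int\phi^2$ in place of the paper's sphere $S$, and the reduction of invertibility to the three scalar conditions $\int hv\neq 0$, $\int v\phi\neq 0$, $\int f''(u)v^3\neq 0$, all of which check out; the identity $c\int hv=\int(f'(u)u-f(u))v$ and the classification at $a=\lambda_1$ also match the paper. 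One slip of phrasing: $f''$ \emph{can} vanish on intervals strictly to the right of $M$; what is true (and what your intermediate value argument actually needs) is that $f''$ cannot vanish identically on any interval $[M,b]$ with $b>M$, since $f'(M)=0$ would then force $f\equiv 0$ there.

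The genuine gap is the global part of the theorem, which is roughly half of the paper's proof and which you compress into one sentence. First, to continue the branch to all $a\in\,]\lambda_1,+\infty[$ one needs an a priori bound preventing $\|u_*(a)\|_{L^2(\Omega)}$ from blowing up at finite $a$, together with convergence of $c_*$. You appeal to ``the no degenerate solutions at infinity analysis'', i.e.\ Lemma~\ref{ultimo}; but that lemma concerns degenerate solutions of Morse index \emph{one} for $a$ near $\lambda_2$, it requires the stronger hypotheses {\bf (b)$'$} and {\bf (c)} which Theorem~\ref{thmd} does not assume, and its proof invokes Theorem~\ref{l1l2}, which itself rests on Theorem~\ref{thmd} --- so citing it here is both inapplicable and circular. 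The argument actually needed (and supplied by the paper) is a blow-up analysis along the branch: if $\|u_*(a_n)\|_{L^2(\Omega)}\to\infty$ with $a_n\to L$, normalize $v_n=u_*(a_n)/\|u_*(a_n)\|_{L^2(\Omega)}$; since $c_*\geq 0$ gives the uniform upper bound $u_*\leq K_a$, the weak limit $v$ is nonpositive, has $L^2$ norm one, and solves $\Delta v+Lv=0$, forcing $v<0$ and $L=\lambda_1$, contradicting $L>\lambda_1$ (convergence of $c_*$ follows from $c_*'=\int u_*w_*/\int hw_*$ being bounded above, or from testing (\ref{a}) against $\phi$). Second, the theorem asserts that the set of \emph{all} degenerate Morse-index-zero solutions equals this single curve; your proposal never shows that an arbitrary such solution lies on it. The paper proves this by following any such solution backwards in $a$ down to $\lambda_1$ --- which requires the same compactness, and here the borderline case $L=\lambda_1$ of the blow-up argument must be excluded separately via $C^{1,\alpha}$ convergence and Hopf's lemma (for large $n$ one would have $u_*(a_n)<0$, so the degeneracy equation reads $\Delta w_n+a_nw_n=0$ with $w_n>0$ and $a_n>\lambda_1$, impossible) --- and then showing the limit is $(\lambda_1,M\phi,0)$ and invoking the local uniqueness statement of Lemma~\ref{curve}. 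Without these two arguments the continuation and connectedness claims remain unproven.
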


The half line $(\lambda_1,t\phi,0)$ for $t\in\,]-\infty,M]$
consists of degenerate solutions of (\ref{a}) with Morse index equal to zero. Consider the function
$G:\R^2\times\rr\times\R\times S\to L^p(\Omega)\times L^p(\Omega)$, where
\begin{equation}\label{ss}
\textstyle S=\left\{w\in\h:\int w^2=\int\phi^2\right\},
\end{equation}
$G$ defined by
$$
\begin{array}{rcl}
 G(a,t,y,c,w)&=&(\Delta\tpy+a\tpy-f\tpy-ch,\\
&&\ \Delta w+aw-f'\tpy w).
\end{array}
$$
Then $G(\lambda_1,t,0,0,\phi)=0$ for $t\in\,]-\infty,M]$.
Notice the difference in notation, ${\cal S}$ in (\ref{s})
as opposed to $S$ in (\ref{ss}).

\begin{Lem}[Initial portion of $\D_*$]\label{curve} 
There exists $\sigma>0$ and $C^1$ functions $\A_*:J\to\R$, $\y_*:J\to\rr$, $\cc_*:J\to\R$ and $\w_*:J\to S$,
where 
$J=\,]-\infty,M+\sigma[$, such that 
the map $t\mapsto(\A_*(t),t\phi+\y_*(t),\cc_*(t))$, defined in $J$, 
with $\A_*(t)=\lambda_1$, $\y_*(t)= 0$, $\cc_*(t)=0$,
$w_*(t)=\phi$ for $t\in\,]-\infty,M]$,
parametrizes a curve $\D_*$ of degenerate solutions of~{\rm (\ref{a})}
with Morse index equal to zero.
There exists a neighborhood of $\D_*$
in $\R\times\h\times\R$ such that the degenerate
solutions of the equation in this neighborhood lie on $\D_*$.
The degenerate directions are given by $\w_*$.
\end{Lem}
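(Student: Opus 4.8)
The plan is to apply the Implicit Function Theorem to the map $G$ defined above, using $t$ as the distinguished parameter and solving for the remaining unknowns $(a,y,c,w)$. I would work at a base point $(\lambda_1,t_0,0,0,\phi)$ with $t_0\le M$, where $G$ vanishes by the computation already recorded (for $t_0\le M$ one has $t_0\phi\le M$, so $f(t_0\phi)=f'(t_0\phi)=f''(t_0\phi)=0$). Taking $t_0=M$ will let me push the explicit half line past $t=M$; taking $t_0<M$ will give the tubular-neighborhood (uniqueness) statement.

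First I would compute $L:=D_{(a,y,c,w)}G$ at the base point. Because $f'(t_0\phi)=f''(t_0\phi)=0$ and $\Delta\phi+\lambda_1\phi=0$, the derivative in a direction $(\dot a,\dot y,\dot c,\dot w)\in\R\times\rr\times\R\times T_\phi S$ is
$$L(\dot a,\dot y,\dot c,\dot w)=\bigl((\Delta+\lambda_1)\dot y+\dot a\,t_0\phi-\dot c\,h,\ (\Delta+\lambda_1)\dot w+\dot a\,\phi\bigr),$$
where the tangent space $T_\phi S=\{v\in\h:\int v\phi=0\}$ coincides with $\rr$.

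The main step --- and the point I expect to be the crux --- is to show that $L$ is an isomorphism onto $L^p(\Omega)\times L^p(\Omega)$. Here I would use the Fredholm structure of $\Delta+\lambda_1$, which maps $\h$ onto $\{g\in L^p(\Omega):\int g\phi=0\}$ with kernel $\R\phi$, so that its restriction to $\rr$ is a bijection onto that hyperplane. Given $(F_1,F_2)$, pairing the second component with $\phi$ forces $\dot a=\int F_2\phi/\int\phi^2$, after which $\dot w\in\rr$ solving $(\Delta+\lambda_1)\dot w=F_2-\dot a\phi$ is uniquely determined; pairing the first component with $\phi$ then gives $\dot c\int h\phi=\dot a\,t_0\int\phi^2-\int F_1\phi$, and this solves for $\dot c$ precisely because $\int h\phi\neq0$, after which $\dot y\in\rr$ is again uniquely determined. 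The nonvanishing of $\int h\phi$ is exactly what drives surjectivity and injectivity in the $\dot c$-direction, and it is supplied by {\bf (b)}: since $h\ge0$ with $h>0$ on a set of positive measure and $\phi>0$ in $\Omega$, one has $\int h\phi>0$.

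With $L$ an isomorphism, the Implicit Function Theorem yields $C^1$ functions $a_*,y_*,c_*,w_*$ of $t$ near $t_0$ with $G(a_*,t,y_*,c_*,w_*)=0$. Applying this at $t_0=M$ extends the explicit solution to $t\in\,]M,M+\sigma[$, and local uniqueness forces the branch to coincide with $(\lambda_1,0,0,\phi)$ for $t\le M$, giving the parametrization on $J=\,]-\infty,M+\sigma[$. The relation furnished by the second component says $w_*(t)$ is a zero-eigenfunction of the linearization, so each solution is degenerate with degenerate direction $w_*$; since $w_*(M)=\phi>0$ and $w_*$ is continuous, shrinking $\sigma$ keeps $w_*>0$, whence $w_*$ is the principal eigenfunction and $0$ is the least eigenvalue --- the Morse index is zero. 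Finally, for the neighborhood assertion I would run the identical argument at every base point $(\lambda_1,t_0,0,0,\phi)$ with $t_0\le M$ to produce a tubular neighborhood of the half line; matching a degenerate solution $(a,u,c)$ to a zero of $G$ by writing $u=t\phi+y$ with $t=\int u\phi/\int\phi^2$ and normalizing the degenerate direction in $S$ with the sign making it close to $\phi$, together with continuity of the spectrum (which keeps nearby degenerate solutions of Morse index zero), shows such solutions lie on $\D_*$.
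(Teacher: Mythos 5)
Your proposal is correct and follows essentially the same route as the paper: apply the Implicit Function Theorem to $G$ at the base points $(\lambda_1,t_0,0,0,\phi)$, $t_0\le M$, solving for $(a,y,c,w)$ in terms of $t$, with the isomorphism of the linearization hinging on $\int h\phi>0$ exactly as you identify (the paper checks injectivity by the same pairings with $\phi$ and leaves surjectivity as routine, whereas you solve the system constructively, which gives both at once). Your added details on the tubular-neighborhood/uniqueness step --- spectral continuity forcing nearby degenerate solutions to have Morse index zero and degenerate direction near $\phi$ --- are a correct elaboration of what the paper leaves implicit.
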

\begin{proof}
Let $t_0\in\,]-\infty,M]$.
  We use the Implicit Function Theorem
to show we may write the solutions of $G(a,t,y,c,w)=0$, in a neighborhood
of $(\lambda_1,t_0,0,0,\phi)$, in the form $(\A_*(t),t,\y_*(t),\cc_*(t),\w_*(t))$. Let 
$(\alpha,z,\gamma,\omega)\in\R\times\rr\times\R\times\rr$.
The derivative $DG_{(a,y,c,w)}(\alpha,z,\gamma,\omega)$ at $(\lambda_1,t_0,0,0,\phi)$ is
$$
\begin{array}{rcl}
 G_a\alpha+G_yz+G_c\gamma+G_w\omega\ &=&(\alpha\tpy+\Delta z+az-f'\tpy z-\gamma h,\\
&&\!\!\!\!\!\!\!\!\!\!\!\!\!\!\!\!\!\ \alpha w-f''\tpy zw+\Delta \omega+a\omega-f'\tpy\omega)\\
&\!\!\!\!\!\!\!\!\!\!\!\!\!\!\!\!\!\!\!\!\!\!\!\!\!\!\!\!\!\!\!\!\!\!\!\!=&
\!\!\!\!\!\!\!\!\!\!\!\!\!\!\!\!\!(\alpha t_0\phi+\Delta z+\lambda_1z-\gamma h,
\alpha\phi+\Delta \omega+\lambda_1\omega).
\end{array}
$$
We check this derivative is injective. 
Consider the system obtained by setting the previous derivative equal to 
$(0,0)$.
Multiplying both sides of the second equation by $\phi$ and integrating we get $\alpha=0$.
Thus $\Delta \omega+\lambda_1\omega=0$. Since $\omega\in\rr$ it follows $\omega= 0$.
Now we use the first equation. Multiplying both sides of $\Delta z+\lambda_1z-\gamma h=0$
by $\phi$ and integrating we get $\gamma=0$. This implies $z= 0$ and
proves injectivity. It is easy to check that the derivative is also surjective.
So the derivative is a homeomorphism from $\R\times\rr\times\R\times\rr$ to $L^p(\Omega)\times L^p(\Omega)$.
\end{proof}
Let $\sigma$ be as in Lemma~\ref{curve}.
Because $y_*$ is $C^1$ and $\y_*'(M)=0$,
reducing $\sigma$ if necessary,
$$u_*(t):=t\phi+\y_*(t)$$ is positive and $\max_\Omega u_*(t)>M$ for $t\in\,]M,M+\sigma[$.
Moreover, for $t$ in this interval, $\A_*(t)>\lambda_1$ and $\cc_*(t)>0$ because
$$
(a-\lambda_1)\int w\phi=\int f'(u)w\phi
$$
and
\begin{equation}\label{signc}
\int(f'(u)u-f(u))w=c\int hw.
\end{equation}
We now continue the branch of degenerate solutions $\D_*$ using $a$ as a parameter.
\begin{proof}[Proof of {\rm Theorem~\ref{thmd}}]
Consider the function $H:\R\times\h\times\R\times S\to L^p(\Omega)\times L^p(\Omega)$,
defined by
$$
H(a,u,c,w)=(\Delta u+au-f(u)-ch,\Delta w+aw-f'(u)w).
$$
We know $H(\A_*(t),u_*(t),\cc_*(t),\w_*(t))=0$, 
in particular for $t\in\,]M,M+\sigma[$.
 We use the Implicit Function Theorem
to show that we may write the solutions of $H(a,u,c,w)=0$, in a neighborhood
of a solution $(a_0,u_0,c_0,w_0)$, with $a_0>\lambda_1$, in the form $(a,u_*(a),\cc_*(a),\w_*(a))$. 
[As above, $u_*(t)$ should be called $\tilde{u}_*(t)$ where $\tilde{u}_*(t)=u_*(a_*(t))$, and
similarly for $c_*(t)$ and $w_*(t)$].
Let 
$(v,\gamma,\omega)\in\h\times\R\times\rr_{w_0}$, with 
$$\rr_w:={\textstyle\left\{\omega\in\h:\int \omega w=0\right\}}.$$
The derivative $DH_{(u,c,w)}(v,\gamma,\omega)$ at $(a_0,u_0,c_0,w_0)$ is
$$
\begin{array}{rcl}
 H_uv+H_c\gamma+H_w\omega\ &=&(\Delta v+a_0v-f'(u_0)v-\gamma h,\\
&&\ -f''(u_0) vw_0+\Delta \omega+a_0\omega-f'(u_0)\omega).
\end{array}
$$
We check that the derivative is injective. 
Consider the system obtained by setting the previous derivative equal to 
$(0,0)$.
Multiplying both sides of the first equation by $w_0$ and integrating by parts we obtain $\gamma=0$.
Thus $\Delta v+a_0v-f'(u_0)v=0$. 
So $v=\kappa w_0$ where $\kappa\in\R$. Substituting into the second equation,
$$
-\kappa f''(u_0)w_0^2+\Delta \omega+a_0\omega-f'(u_0)\omega=0.
$$
Multiplying both sides of this equation by $w_0$ and integrating by parts it follows
$$
\kappa\int f''(u_0)w_0^3=0.
$$
This implies that either $\kappa=0$ or $f''(u_0)\equiv 0$.
The function $f''(u_0)$ is identically zero iff $u_0\leq M$. In this case $\Delta w_0+a_0w_0=0$.
Then $a_0=\lambda_1$ because $w_0>0$. 
So $\kappa=0$ and $\Delta \omega+a_0\omega-f'(u_0)\omega=0$. The function $\omega$ has to be a multiple
of $w_0$. On the other hand, $\omega\in\rr_{w_0}$. Finally, $\omega= 0$.
We have proved injectivity. It is easy to check that the derivative is also surjective.
So the derivative is a homeomorphism from $\h\times\R\times\rr_{w_0}$ to $L^p(\Omega)\times L^p(\Omega)$.

The branch of degenerate solutions $(\A_*(t),t\phi+\y_*(t),\cc_*(t))$ for $t\in\,]M,M+\sigma[$ may also
be represented as $(a,u_*(a),\cc_*(a))$. The inverse of $\left.a_*\right|_{]M,M+\sigma[}$ is $t_*$, with
\begin{equation}\label{ta}
\tau(a):=\textstyle\frac{\int u_*(a)\phi}{\int\phi^2}.
\end{equation}
It follows that $t_*(a)$ is an increasing function
of $a$ for $a\in\,\bigl]\lambda_1,\lim_{t\nearrow M+\sigma}\A_*(t)\bigr[$. 

Equality (\ref{signc}) shows that for any degenerate solution with Morse index equal to zero
the value of $c$ is nonnegative.
As $\cc_*(a)\geq 0$, we have $\max_\Omega u_*(a)\leq K_a$ ($K_a$ as in (\ref{ka})).
Moreover, we just saw that $\max_\Omega u_*(a)$ cannot drop to $M$ for $a>\lambda_1$.
We may follow the branch of degenerate solutions $(a,u_*(a),\cc_*(a),\w_*(a))$ while 
$\cc_*(a)$ does not go to $+\infty$ and $\min_\Omega u_*(a)$ does not go to $-\infty$.
Suppose the branch of degenerate solutions exists for $a\in\,]\lambda_1,L[$. Let $a_n\nearrow L$.
We wish to prove that, modulo a subsequence, $\lim_{n\to+\infty}u_*(a_n)$ and $\lim_{n\to+\infty}\cc_*(a_n)$
exist. This will imply that the branch can be extended for all $a\in\,]\lambda_1,+\infty[$.

By a computation similar to the one leading to (\ref{bdd}),
$[u_*(a_n)]^+$ is bounded in $H^1_0(\Omega)$.
Modulo a subsequence, $[u_*(a_n)]^+\weak u_L$ in $H^1_0(\Omega)$, $[u_*(a_n)]^+\to u_L$ in $L^2(\Omega)$ and 
$[u_*(a_n)]^+\to u_L$ a.e. in $\Omega$.
Arguing as in the proof of Lemma~\ref{unique}, 
$\w_*(a_n)\weak w_L$ in $H^1_0(\Omega)$ where
$w_L$ satisfies $\Delta w_L+Lw_L-f'(u_L)w_L=0$, and so $w_L>0$.
Differentiating both sides of $H(a, u_*(a), \cc_*(a), \w_*(a))=0$ with respect to $a$, we readily deduce
$$
\cc_*'(a)=\frac{\int u_*(a)w_*(a)}{\int hw_*(a)}.
$$
Since the sequence $(u_*(a_n))$ is uniformly bounded above
and $w_L$ is positive, $\limsup_{n\to\infty}\cc_*'(a_n)$ is finite. Recalling that $c_*$ is nonnegative,
we conclude that $\lim_{n\to\infty}\cc_*(a_n)$ exists. We call the limit $c_L$.

Next we show $\|u_*(a_n)\|_{L^2(\Omega)}$ are uniformly bounded. By contradiction suppose
$\|u_*(a_n)\|_{L^2(\Omega)}\to+\infty$. Define $v_*(a_n)=u_*(a_n)/\|u_*(a_n)\|_{L^2(\Omega)}$.
The new functions satisfy
$$
\Delta v_*(a_n)+a_nv_*(a_n)-\frac{f(u_*(a_n))}{\|u_*(a_n)\|_{L^2(\Omega)}}-\frac{\cc_*(a_n)}{\|u_*(a_n)\|_{L^2(\Omega)}}h=0.
$$
Multiplying both sides by $v_*(a_n)$ and integrating we conclude $(v_*(a_n))$ is bounded in $H^1_0(\Omega)$.
Modulo a subsequence, $v_*(a_n)\weak v$ in $H^1_0(\Omega)$, $v_*(a_n)\to v$ in $L^2(\Omega)$ and 
$v_*(a_n)\to v$ a.e. in $\Omega$, where
$$
\Delta v+Lv=0.
$$
The function $v$ is nonpositive because $(u_*(a_n))$ is uniformly bounded above.
The function $v$ is negative because it has $L^2(\Omega)$ norm equal to one.
So $L=\lambda_1$, which is a contradiction. For use below, we observe that even in the case $L=\lambda_1$ we
are lead to a contradiction, as we now see. 
Indeed, \cite[Lemma~9.17]{GT} implies $\|v_*(a_n)\|_{\h}$ are uniformly bounded.
We may assume $v_*(a_n)\to v$ in $C^{1,\alpha}(\bar{\Omega})$. Hopf's lemma implies $v_*(a_n)$, and hence $u_*(a_n)$,
are negative for large $n$. For these $n$, the linearized equations become
$$
\Delta\w_*(a_n)+a_n\w_*(a_n)=0.
$$
This is a contradiction as $a_n>\lambda_1$ and the $\w_*(a_n)$ are positive.
We have proved $\|u_*(a_n)\|_{L^2(\Omega)}$ are uniformly bounded.

We use the fact that $(a,u_*(a),\cc_*(a))$ satisfies (\ref{a}) and, 
once more, \cite[Lemma~9.17]{GT} to
guarantee that, modulo a subsequence, $(u_*(a_n))$ converges strongly in $\h$ to a function we designate by $u_*(L)$,
as it satisfies (\ref{a}) with $a=L$ and $c=c_L$.
This finishes the proof that the branch of degenerate solutions can be extended to all $a>\lambda_1$,
because we may apply the Implicit Function Theorem to $H=0$ at the solution $(L,u_*(L),c_L,w_L)$.

To finish the proof of Theorem~\ref{thmd}, we notice that if $(a,\tu(a),\tilde{c}(a))$ is a degenerate solution
of (\ref{a}) with Morse index equal to zero, then, again by the Implicit Function Theorem,
we can follow this solution backwards using the parameter $a$ until we reach $\lambda_1$. Let $a_n\searrow\lambda_1$.
The norms $\|\tu(a_n)\|_{L^2(\Omega)}$ are uniformly bounded, as we saw two paragraphs above.
Arguing as before, $\tilde{c}(a_n)\to \tilde{c}_0$.
The sequence $(a_n,\tu(a_n),\tilde{c}(a_n))$ will converge in $\R\times\h\times\R$ to, say, $(\lambda_1,\tu_0,\tilde{c}_0)$,
solution of (\ref{a})
(see the end of the proof of Lemma~\ref{unique}).
Multiplying both sides of (\ref{a}) by $\phi$ and integrating $\tilde{c}_0=0$.
By Lemma~\ref{curve}
we have $\tu_0=M\phi$ and so $\tu(a)=u_*(a)$.
\end{proof}

\begin{Rmk}\label{unico}
Assume {\bf (a)}.
  If $(a_n,u_n,c_n)$
is a sequence of solutions of {\rm (\ref{a})}, with $a_n>\lambda_1$ and bounded away
from $\lambda_1$, $(a_n)$ bounded above, and $(c_n)$ bounded, then $(u_n)$ is
uniformly bounded.
If $c_n\geq 0$ the same is true under the weaker assumption {\bf (a)$'$}.
\end{Rmk}
Indeed, the hypotheses in the remark imply $(u_n)$ is uniformly bounded above. Therefore
we may argue as in the proof of Theorem~\ref{thmd}.

\begin{Prop}[Behavior of $c$ along $\D_*$]\label{p1} Suppose $f$ satisfies {\rm\bf (i)}-{\rm\bf (iv)}
and $h\in\{\hbar\in C^1(\bar{\Omega}):\hbar=0\ {\rm on}\ \partial\Omega\}$ satisfies {\rm\bf (b)}.
 Along $\D_*$ we have
$$
\lim_{a\to+\infty}c_*(a)=+\infty.
$$
\end{Prop}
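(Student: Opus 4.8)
The plan is to read off a workable formula for $c_*$ from the degeneracy relations along $\D_*$ and reduce the claim to an unboundedness statement for a single integral. Along $\D_*$ the function $w_*$ is the principal (positive) eigenfunction of $-\Delta+f'(u_*)$ with eigenvalue $a$, normalized by $\int w_*^2=\int\phi^2$, and equality~(\ref{signc}) gives
$$
c_*(a)\,\int h\,w_*\ =\ \int q(u_*)\,w_*,\qquad q(u):=f'(u)u-f(u)\geq 0,
$$
where $q$ is nondecreasing on $[0,\infty)$ (since $q'(u)=f''(u)u\geq 0$) and $q\equiv 0$ on $(-\infty,M]$. I would first record that $q(u)\to+\infty$ as $u\to+\infty$: because $\frac{d}{du}\big(f(u)/u\big)=q(u)/u^2$ and $f(u)/u\to+\infty$ by {\bf (iv)}, a bounded $q$ would make $f(u)/u$ bounded. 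Since $h\in L^\infty$, the denominator is bounded above, $0<\int h\,w_*\leq\|h\|_{L^\infty}\,|\Omega|^{1/2}\big(\int\phi^2\big)^{1/2}$, using Cauchy--Schwarz and the normalization. As $c_*\geq 0$, it therefore suffices to show that the numerator $\int q(u_*)\,w_*$ tends to $+\infty$ along $\D_*$.

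Two ingredients feed the argument. First, $\max_\Omega u_*(a)\to+\infty$: testing the Rayleigh quotient for the eigenvalue $a$ with the first eigenfunction gives $a\leq\lambda_1+\big(\int f'(u_*)\phi^2\big)/\int\phi^2$, so $\int f'(u_*)\phi^2\geq(a-\lambda_1)\int\phi^2$ and hence $\max_\Omega f'(u_*)\geq a-\lambda_1\to+\infty$; since $f'$ is nondecreasing with $f'(u)\to+\infty$, the maximum of $u_*$ blows up. Second, an averaged eigenvalue identity: multiplying $-\Delta w_*=(a-f'(u_*))w_*$ by $\phi$ and integrating by parts yields
$$
(a-\lambda_1)\int w_*\phi\ =\ \int f'(u_*)\,w_*\phi,
$$
so the mean of $f'(u_*)$ against the (positive) measure $w_*\phi\,dx$ equals $a-\lambda_1\to+\infty$; this measure must therefore place substantial weight where $u_*$ is large.

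To finish I would argue by contradiction: suppose $a_n\to+\infty$ while $c_*(a_n)\leq C$. Writing $u_n=u_*(a_n)$, $w_n=w_*(a_n)$, the formula above forces $\int q(u_n)w_n\leq C\int h\,w_n$ to stay bounded, whence for every fixed level $\ell$ one gets $\int_{\{u_n>\ell\}}w_n\leq C'/q(\ell)$, i.e.\ the $w_n$-mass that escapes to large values is small. The goal is to clash this with the averaged identity. I expect the genuine obstacle to be exactly here: because $f'(u_n)$ is unbounded, a vanishingly small $w_n$-mass sitting where $u_n$ is enormous could still realize the large average $a_n-\lambda_1$, so smallness of $\int_{\{u_n>\ell\}}w_n$ does not by itself contradict the identity. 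Overcoming this needs an elliptic/eigenfunction-localization estimate controlling $\int f'(u_n)w_n$ on $\{f'(u_n)\geq a_n/2\}$; concretely, one would bound $\int f(u_n)w_n$ so that the pointwise relation $f'(u_n)u_n=q(u_n)+f(u_n)$ converts the numerator bound into control of $\int_{\{u_n\text{ large}\}}f'(u_n)w_n$, contradicting the averaged identity. The complementary scenario is that $w_n$ concentrates in a boundary layer, where $u_n\leq M$ and $q(u_n)=0$; this is precisely where the strengthened hypothesis $h\in C^1(\bar\Omega)$ with $h=0$ on $\partial\Omega$ enters, since $h\lesssim\mathrm{dist}(\cdot,\partial\Omega)$ then forces $\int h\,w_n\to 0$, so that even a bounded numerator gives $c_*\to+\infty$. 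The heart of the proof is thus the competition between $w_*$ favoring the region where $f'(u_*)$ is small (which contributes nothing to the numerator) and the numerator needing mass where $u_*$ is large, and quantifying the $w_*$-mass in the intermediate regime where $u_*$ is large yet $f'(u_*)\lesssim a$ is the step I expect to require the most care.
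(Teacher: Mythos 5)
Your preliminary reductions are all correct: the identity $c_*(a)\int hw_*=\int\bigl(f'(u_*)u_*-f(u_*)\bigr)w_*$ is exactly the paper's (\ref{signc}), the denominator is positive and bounded above, $q(u):=f'(u)u-f(u)$ is nondecreasing with $q(u)\to+\infty$, the Rayleigh-quotient argument gives $\max_\Omega u_*(a)\to+\infty$, and the averaged identity $(a-\lambda_1)\int w_*\phi=\int f'(u_*)w_*\phi$ holds. But the proof stops exactly where the real work begins, and you say so yourself: from $c_*(a_n)\le C$ you only get that the $w_n$-mass of $\{u_n>\ell\}$ is $O(1/q(\ell))$, and since $f'(u_n)$ is unbounded, an arbitrarily small $w_n\phi$-mass sitting where $u_n$ is enormous can by itself realize the average $a_n-\lambda_1$; the ``localization estimate'' that would exclude this is only asserted to be needed, never proved. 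Moreover, your treatment of the complementary boundary-layer scenario is logically flawed: under the contradiction hypothesis $c_*(a_n)\le C$, the identity forces the numerator to vanish whenever the denominator does (numerator $=c_*(a_n)\cdot$denominator), so ``$\int hw_n\to 0$ together with a bounded numerator'' yields no conclusion about $c_*$; you would need the numerator bounded \emph{below} by a positive constant, which is precisely what concentration of $w_n$ in the region where $q(u_n)=0$ destroys. Note finally that your sufficient condition (numerator $\to+\infty$) may well be strictly stronger than the proposition itself: nothing rules out that numerator and denominator both tend to $0$ while their ratio tends to $+\infty$. So this is a genuine gap at the heart of the argument, not a technical loose end.

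For contrast, the paper proves the proposition with no asymptotic analysis of $(u_*,w_*)$ at all. Given $\underline{c}>0$, it fixes $l>\lambda_1$, picks $t>0$ with $t\phi\le u_\dagger(l)$, and chooses $A$ so large that $(A-\lambda_1)t\phi-f(t\phi)-\underline{c}\,h\ge 0$; this is where the hypothesis $h\in C^1(\bar\Omega)$ with $h=0$ on $\partial\Omega$ is genuinely used, since by Hopf's lemma it makes $h/\phi$ bounded and so $t\phi$ a subsolution at level $\underline{c}$ for every $a>A$. The stable solution $u_\dagger(a)$ of the harvest-free equation is an ordered supersolution, Brezis--Nirenberg produces a local minimizer of the truncated functional between them, i.e.\ a solution that is stable or degenerate with Morse index zero, and Theorem~\ref{big} together with the uniqueness in Theorem~\ref{thmd} then force $c_*(a)\ge\underline{c}$. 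If you want to rescue your route, the missing piece is a real concentration analysis of the principal eigenfunction $w_*$ of $-\Delta+f'(u_*)$ as $a\to+\infty$; the sub/supersolution argument sidesteps that difficulty entirely.
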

\begin{proof}
 Suppose $\underline{c}>0$. Fix any $l>\lambda_1$. 
Choose $t>0$ such that
$t\phi\leq u_\dagger(l)$ where $(l,u_\dagger(l))$ is the stable solution of equation (\ref{b}) in Section~\ref{noh}.
Such a $t$ exists by (\ref{texists}).
Next choose $A\geq l$ large enough satisfying
$$
(A-\lambda_1)t\phi-f(t\phi)-\underline{c}h\geq 0.
$$
Take $a>A$. Then $(a,t\phi,\underline{c})$ is a subsolution of (\ref{a}).
And $(a,u_\dagger(a),\underline{c})$ is a supersolution of (\ref{a}). 
The subsolution and the supersolution are ordered,
$t\phi\leq u_\dagger(l)<u_\dagger(a)$, as the solutions $u_\dagger$ are strictly increasing along $\C_\dagger$.
Moreover, neither $(a,t\phi,\underline{c})$ nor $(a,u_\dagger(a),\underline{c})$ is a solution of (\ref{a}).
Let $K_a$ be as in (\ref{ka}). Clearly, $u_\dagger(a)\leq K_a$.
Define
\begin{equation}\label{fk}
f_{K_a}(u)=\left\{\begin{array}{ll}
                      f(u)&{\rm if}\ u\leq K_a,\\
		      f(K_a)+f'(K_a)(u-K_a)&{\rm if}\ u>K_a.
                      \end{array}
\right.
\end{equation}
By \cite[Theorem~2]{BN}, there is a solution $u_0\in H^1_0(\Omega)$ of
\begin{equation}\label{d}
\Delta u+au-f_{K_a}(u)-\underline{c}h=0,
\end{equation}
$t\phi<u_0<u_\dagger(a)$, such that $u_0$ is a local minimum of $I_{K_a}:H^1_0(\Omega)\to\R$,
defined by
\begin{equation}\label{I}
I_{K_a}(u):={\textstyle\frac{1}{2}}\int(|\nabla u|^2-au^2)+\int F_{K_a}(u)+\underline{c}\int hu.
\end{equation}
Here $F_{K_a}(u)=\int_0^uf_{K_a}(s)\,ds$. By \cite[Lemma~9.17]{GT} $u_0\in\h$. Since $\h\subset H^1_0(\Omega)$,
differentiating $I_{K_a}$ twice, $u_0$ is either stable 
or degenerate 
with Morse index equal to zero in $\h$. As $u_0<u_\dagger(a)<K_a$, and $f_{K_a}$ (respectively $f_{K_a}'$)
coincides with $f$ (respectively $f'$) below $K_a$, $u_0$ is a solution of (\ref{a}), 
either stable or degenerate 
with Morse index equal to zero.
By the Theorem~\ref{big} ahead, $(a,u_0,\underline{c})$ must lie on ${\cal M}^*\cup\{{\bm p}_*\}$ and
$c_*(a)\geq\underline{c}$. We have shown that
given $\underline{c}>0$, there exists $A>0$, such that for all $a>A$ we have
$c_*(a)\geq\underline{c}$. The proof is complete.
\end{proof}

\section{Stable solutions, solutions around $\D_*$,
solutions around zero, and mountain pass solutions}\label{four}

In this section we treat successively
stable solutions in Theorem~\ref{big} and Proposition~\ref{p2}, solutions around $\D_*$ in Lemma~\ref{turn},
solutions around zero in Lemma~\ref{p3}, and mountain pass solutions in Lemma~\ref{mp}. We finish 
by proving the existence of at least three solutions for $a>\lambda_2$, $a$ not an eigenvalue, and small $|c|$ in Theorem~\ref{i1}.

\begin{Thm}[${\cal M}^*$, stable solutions of {\rm (\ref{a})}]\label{big} 
Suppose $f$ satisfies {\rm\bf (i)}-{\rm\bf (iv)}
and $h$ satisfies {\rm\bf (a)-(b)}. Fix $a>\lambda_1$.
The set of stable solutions $(c,u)$ of {\rm (\ref{a})} is 
a $C^1$ manifold ${\cal M}^*$, which is the graph $\{(c,u^*(c)):c\in\,]-\infty,c_*[\}$. 
The limit $\lim_{c\nearrow c_*}(c,u^*(c))$ exists
and  equals ${\bm p}_*:=(c_*,u_*)$, the degenerate solution with Morse index equal to zero
on the curve $\D_*$ (i.e.\ $(c_*,u_*)=(c_*(a),u_*(a))$
of {\rm Theorem~\ref{thmd}}). 
If $c_1<c_2$, then $u^*(c_1)>u^*(c_2)$.
\end{Thm}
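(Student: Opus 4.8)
The plan is to establish Theorem~\ref{big} in three stages: first that stable solutions, where they exist, form a $C^1$ graph over $c$ that is strictly decreasing; second that this graph extends leftward for all $c\in\,]-\infty,c_*[$; and third that its right endpoint is exactly the degenerate solution ${\bm p}_*$ on $\D_*$. For the local structure, I would apply the Implicit Function Theorem to $F(a,u,c)=\Delta u+au-f(u)-ch$ at any stable solution: since stability means the linearized operator $v\mapsto\Delta v+av-f'(u)v$ has strictly positive least eigenvalue, it is invertible on $\h$, so near such a point the solution set is a $C^1$ graph $c\mapsto u^*(c)$. Differentiating $F(c,u^*(c))=0$ in $c$ gives $\Delta(u^*)'+a(u^*)'-f'(u^*)(u^*)'=h$; testing against the positive first eigenfunction $v$ of the linearized problem yields $\mu\int (u^*)'v=\int hv$, and since $\mu>0$, $h\ge 0$ with $h>0$ on a set of positive measure, and $v>0$, we get $\int(u^*)'v>0$. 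To upgrade this to the pointwise statement $(u^*)'>0$, hence $u^*(c_1)>u^*(c_2)$ for $c_1<c_2$, I would instead argue directly by the maximum principle: for $c_1<c_2$, the function $w=u^*(c_1)-u^*(c_2)$ solves $\Delta w+aw-\tilde f\,w=(c_1-c_2)h<0$ where $\tilde f=\int_0^1 f'(su^*(c_1)+(1-s)u^*(c_2))\,ds\ge 0$; since the underlying operator is stable (nonnegative principal eigenvalue along the connected branch), the maximum principle forces $w>0$.

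For the global continuation, I would follow a stable solution as $c$ decreases and show it cannot cease to exist. Stability is preserved under continuation as long as the least eigenvalue stays positive; a degenerate stable solution (least eigenvalue zero, Morse index zero) would, by Theorem~\ref{thmd}, lie on $\D_*$, and I would show via (\ref{signc}) that this forces $c=c_*(a)\ge 0$, so no degeneracy occurs for $c<c_*$. It then remains to rule out blow-up as $c\searrow-\infty$: I would use the monotonicity just proved together with an \emph{a priori} bound. Stable solutions are nonnegative with $\max_\Omega u>M$ by Lemma~\ref{gM}, and the maximum principle gives $u^*(c)\le K_a$ (with $K_a$ as in (\ref{ka})) when $c\ge 0$; for $c<0$ a comparable upper bound follows from comparison with a supersolution, while the lower bound $u^*(c)\ge 0$ is automatic. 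With $(u^*(c))$ uniformly bounded on compact $c$-intervals, Remark~\ref{unico} and \cite[Lemma~9.17]{GT} give uniform $\h$-bounds and strong convergence, so the IFT can always be reapplied; hence the branch extends to all $c\in\,]-\infty,c_*[$.

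For the right endpoint, as $c\nearrow c_*:=\cc_*(a)$ from Theorem~\ref{thmd}, the monotone increasing family $u^*(c)$ is bounded above (by $K_a$) and below, so by Remark~\ref{unico} and elliptic estimates it converges in $\h$ to a solution $u_*$ at $c=c_*$; the corresponding first eigenvalue, which is positive for $c<c_*$ and continuous in $c$, must reach zero in the limit, for otherwise the branch would continue past $c_*$ as a stable branch, and the limit would be a stable solution with $c=c_*$, contradicting that at $c_*$ the unique such continuation is the degenerate point on $\D_*$. Thus $\lim_{c\nearrow c_*}(c,u^*(c))=(c_*,u_*)={\bm p}_*$, the degenerate solution with Morse index zero. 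The main obstacle I expect is the uniform upper bound for stable solutions when $c<0$: here the harvesting term $-ch$ is a positive source, so the usual $u\le K_a$ comparison argument must be replaced by one that exploits either the monotonicity in $c$ (reducing the $c<0$ case to the already-controlled $c=0$ solution $u_\dagger(a)$, since $u^*(c)$ decreases in $c$ and hence $u^*(c)\le u^*(0)=u_\dagger(a)$ for $c\ge0$ but \emph{increases} as $c$ decreases) together with Remark~\ref{unico}, which already furnishes uniform bounds for bounded $c$ and $a$; closing this bound uniformly as $c\searrow-\infty$ is precisely the delicate point.
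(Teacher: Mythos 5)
Your overall skeleton (IFT at stable solutions, monotonicity, leftward extension, identification of the right endpoint via Theorem~\ref{thmd}) matches the paper's, but the right-endpoint step has a genuine gap: your argument is circular. You assume the stable branch is defined precisely on an interval whose right endpoint is $c_*=\cc_*(a)$, and then argue the first eigenvalue must vanish there ``for otherwise the branch would continue past $c_*$ as a stable branch, \ldots contradicting that at $c_*$ the unique such continuation is the degenerate point on $\D_*$.'' Theorem~\ref{thmd} asserts uniqueness of \emph{degenerate} solutions with Morse index zero; it says nothing whatsoever against a \emph{stable} solution existing at $c=c_*(a)$ or beyond, so there is no contradiction to invoke. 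What is missing is the reason the rightward continuation must degenerate at some finite $c$ at all. The paper obtains this from the a priori bound (\ref{upc}): multiplying (\ref{a}) by $\phi$ gives $c\int h\phi=(a-\lambda_1)\int u\phi-\int f(u)\phi\leq(a-\lambda_1)\int u^+\phi$, and along the branch $u^*(c)\leq u^*(0)=u_\dagger(a)$ for $c\geq 0$ (by the very monotonicity you prove), so $c$ is bounded above on any stable branch since $\int h\phi>0$ by {\bf (b)}. Hence the branch cannot remain stable for all positive $c$; there is a finite $c^\dagger$ with $\mu^*(c)\searrow 0$ as $c\nearrow c^\dagger$, the limit solution is degenerate with Morse index zero, and \emph{only then} does the uniqueness in Theorem~\ref{thmd} identify $c^\dagger=\cc_*(a)$ and the limit as $u_*(a)$. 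Without this bound, nothing you wrote excludes a branch that stays stable for all $c\to+\infty$.

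There are also secondary problems. First, signs: the correct identity is $-\mu\int(u^*)'v=\int hv$, so $\int(u^*)'v<0$, and the pointwise statement you need is $(u^*)'<0$; as written, ``$(u^*)'>0$, hence $u^*(c_1)>u^*(c_2)$ for $c_1<c_2$'' is self-contradictory, even though your maximum-principle conclusion $w>0$ lands on the correct (decreasing) monotonicity. Second, the maximum principle for the averaged-potential operator $-\Delta-a+\tilde f$ does not follow from stability of the solutions at the endpoints; either argue as the paper does (differentiate and apply the maximum principle to $\Delta(u^*)'+a(u^*)'-f'(u^*)(u^*)'=h$, using that the linearization at the stable solution $u^*(c)$ is a positive operator), or use convexity, $f(u^*(c_1))-f(u^*(c_2))\leq f'(u^*(c_1))w$, to replace $\tilde f$ by $f'(u^*(c_1))$, whose operator is the linearization at a stable solution. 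Third, you never show that \emph{every} stable solution lies on this single graph; the paper closes this in one stroke by extending any stable branch down to $c=0$ and invoking Lemma~\ref{unique} (uniqueness of the stable solution of (\ref{b})). Finally, the issue you single out as the delicate point---a bound uniform as $c\searrow-\infty$---is a red herring: to extend the branch to every finite $c<c_*$ one only needs bounds on compact $c$-intervals, which is exactly what Remark~\ref{unico} provides; the solutions are free to blow up as $c\to-\infty$.
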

\begin{proof}[Sketch of the proof]\footnote{For the full proof see the Appendix~\ref{appendix}.}
The argument is very close to \cite[proof of Theorem~3.2]{OSS1}.
 Consider the function $\tH:\h\times\R\times S\times\R\to L^p(\Omega)\times L^p(\Omega)$ ($S$ given in (\ref{ss})),
$\tH$ defined by
$$
\tH(u,c,w,\mu)=(\Delta u+au-f(u)-ch,\Delta w+aw-f'(u)w+\mu w).
$$
Apply the Implicit Function Theorem to describe the solutions of $\tH=0$
in a neighborhood of a stable solution $(u,c,w,\mu)$. 
Here $\mu$ is the first eigenvalue of the associated linearized problem and $w$ is
the corresponding positive eigenfunction on $S$.
For each fixed $a$, the solutions of $\tH=0$ in a neighborhood of a stable solution $(u,c,w,\mu)$ lie
on a $C^1$ curve parametrized by $c\mapsto(\uu^*(c),c,\w^*(c),\mu^*(c))$.
Differentiate both sides of the equations $\tH(\uu^*(c),c,\w^*(c),\mu^*(c))=(0,0)$
with respect to $c$. When $\mu>0$, using the maximum principle,
$$ 
(\uu^*)'<0
$$ 
and
$$ 
(\mu^*)'=\frac{\int f''(u^*)(u^*)'(w^*)^2}{\int\phi^2}<0.
$$ 
By Remark~\ref{unico}, we may follow the solution $\uu^*(c)$ until it becomes degenerate. 
The solution $\uu^*(c)$ will have to become degenerate for some value of $c$.
Indeed, from (\ref{a}) we obtain
\begin{equation}\label{upc}
 c\int h\phi=(a-\lambda_1)\int u\phi-\int f(u)\phi\leq (a-\lambda_1)\int u^+\phi,
\end{equation}
showing $c$ is bounded above. Thus, the solutions $\uu^*(c)$ cannot be continued for all positive values of $c$.
There must exist $c_*$ such that $\lim_{c\nearrow c_*}\mu^*(c)=0$. Clearly, the solutions $\uu^*(c)$ will converge
to a solution $u_*$ as $c\nearrow c_*$. By the uniqueness assertion in Theorem~\ref{thmd},
this $(c_*,u_*)$ belongs to $\D_*$. In particular $c_*>0$. 

The above shows any branch of stable solutions can be extended for $c\in\,]-\infty,c_*[$.
But by Lemma~\ref{unique} there is a unique stable solution of (\ref{a}) for $c=0$,
namely $u_\dagger=u_\dagger(a)$. This proves uniqueness.
\end{proof}

\begin{Prop}[Stable solutions are superharmonic for small $|c|$]\label{p2}
Suppose $f$ satisfies {\rm\bf (i)}-{\rm\bf (iv)}
and $h\in\{\hbar\in C^1(\bar{\Omega}):\hbar=0\ {\rm on}\ \partial\Omega\}$
satisfies {\rm\bf (b)}.
Let $a>\lambda_1$ and $u^*$ be as in {\rm Theorem~\ref{big}}.
For small $|c|$,
\begin{equation}\label{ch}
au^*(c)-f(u^*(c))>ch.
\end{equation}
\end{Prop}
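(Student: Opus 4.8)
The plan is to read (\ref{ch}) as the superharmonicity statement $-\Delta u^*(c)>0$ in $\Omega$, which is equivalent since $u^*(c)$ solves (\ref{a}), and then to prove it by perturbing off $c=0$. First I would settle the case $c=0$. As recorded in the proof of Theorem~\ref{big} (via Lemma~\ref{unique}), $u^*(0)=u_\dagger(a)$, which by Theorem~\ref{thmcz} is a positive solution satisfying $0<u_\dagger(a)<K_a$ in $\Omega$, with $K_a$ as in (\ref{ka}) (the bound $u_\dagger(a)<K_a$ was noted right after (\ref{ka})). Writing $\Phi(s):=as-f(s)$, hypothesis {\bf (iii)} gives $\Phi''=-f''\le 0$, so $\Phi$ is concave with $\Phi(0)=0$ and $\Phi(K_a)=0$; hence $\Phi(s)>0$ for $0<s<K_a$. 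Consequently $-\Delta u_\dagger(a)=\Phi(u_\dagger(a))>0$ throughout the interior of $\Omega$.

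The crux is the boundary behaviour, where both sides of (\ref{ch}) vanish. I would upgrade the interior positivity to a bound against the first eigenfunction, namely $\Phi(u_\dagger(a))\ge\eps_0\phi$ in $\Omega$ for some $\eps_0>0$. Near $\partial\Omega$ one has $u_\dagger(a)\le M$ (when $M>0$) or $f(u_\dagger(a))=o(u_\dagger(a))$ (when $M=0$), so $\Phi(u_\dagger(a))$ behaves like $a\,u_\dagger(a)$, and the ratio $u_\dagger(a)/\phi$ stays bounded below by a positive constant because both functions vanish linearly at $\partial\Omega$ with nonzero outward normal derivative (Hopf's Lemma, $\partial_\nu\phi<0$); in the interior $\Phi(u_\dagger(a))$ is bounded below away from zero while $\phi$ is bounded above.

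Next I would transfer this bound to $c\neq 0$. By Theorem~\ref{big} the map $c\mapsto u^*(c)$ is $C^1$ near $c=0$, and $\h=W^{2,p}(\Omega)\hookrightarrow C^1(\bar\Omega)$ since $p>N$; hence $u^*(c)\to u_\dagger(a)$ in $C^1(\bar\Omega)$ as $c\to 0$, and therefore $\Phi(u^*(c))\to\Phi(u_\dagger(a))$ in $C^1(\bar\Omega)$ (recall $f\in C^2$). Both $\Phi(u^*(c))$ and $\phi$ vanish on $\partial\Omega$ with $\partial_\nu\phi<0$, so the quotient $\Phi(u^*(c))/\phi$ extends continuously to $\bar\Omega$ and converges uniformly to $\Phi(u_\dagger(a))/\phi\ge\eps_0$. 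Thus $\Phi(u^*(c))\ge\tfrac{\eps_0}{2}\phi$ in $\Omega$ for $|c|$ small.

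Finally I would absorb the harvesting term. Since $h\in C^1(\bar\Omega)$ vanishes on $\partial\Omega$, the same comparison gives $|h|\le C'\phi$ in $\Omega$ (using $\phi\ge c_0\,\mathrm{dist}(\cdot,\partial\Omega)$, again Hopf's Lemma). Combining, $au^*(c)-f(u^*(c))-ch\ge(\tfrac{\eps_0}{2}-|c|C')\phi>0$ in $\Omega$ whenever $|c|<\eps_0/(2C')$, which is precisely (\ref{ch}). The main obstacle is exactly the degeneracy of the inequality at $\partial\Omega$: uniform convergence in the interior is not enough, and one must instead compare the linear vanishing rates of $\Phi(u^*(c))$ and of $ch$ against $\phi$, which is where $C^1$ control up to the boundary and Hopf's Lemma are essential.
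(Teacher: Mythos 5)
Your proposal is correct and follows essentially the same route as the paper's proof: strict positivity of $au_\dagger-f(u_\dagger)$ coming from $0<u_\dagger<K_a$, Hopf's lemma to control the linear vanishing rate at $\partial\Omega$, the $C^1(\bar{\Omega})$-continuity of $c\mapsto u^*(c)$ via the embedding of $\h$, and finally the fact that $h\in C^1(\bar{\Omega})$ vanishes on $\partial\Omega$. The only difference is presentational: you compare all quantities against the explicit gauge $\phi$ (so that the conclusion reads $au^*(c)-f(u^*(c))-ch\geq\bigl(\tfrac{\eps_0}{2}-|c|C'\bigr)\phi$), whereas the paper phrases the same boundary-rate argument through the sign of the exterior normal derivative of $au^*(c)-f(u^*(c))$ together with interior positivity.
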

This generalizes formula (3.2) of \cite{OSS1}. The proof of \cite{OSS1} does not carry through
in our setting however.
\begin{proof}[Proof of {\rm Proposition~\ref{p2}}]
Fix $a>\lambda_1$ and let $u_\dagger=u_\dagger(a)$ be the stable solution of (\ref{a}) for $c=0$.
In the proof of Theorem~\ref{thmcz} we showed $0<u_\dagger<K_a$.
So $au_\dagger-f(u_\dagger)$ is strictly positive on $\Omega$. Hopf's lemma implies
$\frac{\partial u_\dagger}{\partial n}<0$ on $\partial\Omega$, where $n$ denotes
the exterior unit normal to $\partial\Omega$. As $f'(0)=0$,
we also have $\frac{\partial}{\partial n}(au_\dagger-f(u_\dagger))<0$ on $\partial\Omega$.
Since the function $\uu^*$ in Theorem~\ref{big} is $C^1$ into $\h$, and so in particular 
continuous, and $\h$ is compactly embedded in $C^{1,\alpha}(\bar{\Omega})$,
it follows $\frac{\partial}{\partial n}(a\uu^*(c)-f(\uu^*(c)))<0$ on $\partial\Omega$
for small $|c|$. Moreover, $a\uu^*(c)-f(\uu^*(c))>0$ in $\Omega$ for small $|c|$. 
If $h$ belongs to the space $C^1(\bar{\Omega})$ and vanishes
on $\partial\Omega$, then we obtain (\ref{ch}), for small $|c|$.
\end{proof}

\begin{Lem}[Solutions around $\D_*$, {\cite[Theorem~3.2]{CR2}}, {\cite[p.~3613]{OSS1}}]\label{turn} Let $a>\lambda_1$ be fixed and
${\bm p}_*=(c_*,u_*)$ be a degenerate solution with Morse index equal to zero.
There exists a neighborhood of ${\bm p}_*$ in $\R\times\h$ such that the
set of solutions of {\rm (\ref{a})} in the neighborhood is a $C^1$ manifold.
This manifold is ${\bm m}^\sharp\cup\{{\bm p}_*\}\cup{\bm m}^*$. Here
\begin{itemize}
\item
${\bm m}^\sharp$ is a manifold of nondegenerate solutions with Morse index equal to one, which
is a graph $\{(c,u^\sharp(c)):c\in\,]c_*-\eps_*,c_*[\}$.
\item ${\bm m}^*$ is a manifold of stable solutions, which
is a graph $\{(c,u^*(c)):c\in\,]c_*-\eps_*,c_*[\}$.
\end{itemize}
The value $\eps_*$ is positive. The manifolds ${\bm m}^\sharp$ and ${\bm m}^*$ are connected by $\{{\bm p}_*\}$.
\end{Lem}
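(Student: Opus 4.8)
The plan is a one-shot Lyapunov--Schmidt reduction, in the ``best coordinates plus Implicit Function Theorem'' spirit of the Introduction, reproducing the classical turning-point picture of \cite[Theorem~3.2]{CR2} and \cite[p.~3613]{OSS1}. Write $F(c,u)=\Delta u+au-f(u)-ch$ with $a$ fixed, so that (\ref{a}) reads $F(c,u)=0$. Since ${\bm p}_*=(c_*,u_*)$ is degenerate with Morse index zero, the principal eigenvalue of $L_*v:=\Delta v+av-f'(u_*)v$ vanishes; being principal it is simple, with a positive eigenfunction $w_0$, whence $\ker L_*={\rm span}\,w_0$ and, by self-adjointness, ${\rm Range}\,L_*=\bigl\{g:\int gw_0=0\bigr\}$.

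The crux is transversality, i.e.\ that ${\bm p}_*$ is a genuine fold and not a bifurcation point. Multiplying $F(c_*,u_*)=0$ by $w_0$, integrating, and using $L_*w_0=0$ reproduces (\ref{signc}) at ${\bm p}_*$:
\[
\int\bigl(f'(u_*)u_*-f(u_*)\bigr)w_0=c_*\int hw_0 .
\]
By {\bf (iii)} the left integrand is nonnegative, and it is not identically zero because $\max_\Omega u_*>M$ (as in Theorem~\ref{thmd}); since $w_0>0$ the left side is strictly positive. As $c_*>0$ by Theorem~\ref{big}, this gives $\int hw_0>0$, so $h\notin{\rm Range}\,L_*$.

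With transversality secured I parametrize by the $w_0$-component: put $u=u_*+sw_0+z$ with $\int zw_0=0$ and consider $(s,c,z)\mapsto F(c,u_*+sw_0+z)$. At $s=0$ its derivative in $(c,z)$ is $(\gamma,\zeta)\mapsto-\gamma h+L_*\zeta$, an isomorphism of $\R\times({\rm span}\,w_0)^\perp$ onto $L^p(\Omega)$ precisely because $\int hw_0\neq0$. The Implicit Function Theorem then yields $C^1$ functions $s\mapsto(c(s),z(s))$ with $c(0)=c_*$, $z(0)=0$, which exhaust the solutions of (\ref{a}) near ${\bm p}_*$ and constitute the asserted $C^1$ manifold. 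Differentiating once and projecting on $w_0$ gives $c'(0)=0$, $z'(0)=0$, so $u'(0)=w_0$. (The curve is in fact $C^2$, since $f\in C^2$ makes the superposition operator $C^2$ on $\h\hookrightarrow C^0(\bar\Omega)$, which is what allows the second-order computation below.)

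It remains to fix the direction of the fold and the indices. Differentiating twice and projecting on $w_0$ gives $c''(0)=-\int f''(u_*)w_0^3\big/\int hw_0<0$, the numerator being strictly positive (again $f''\geq0$, $\not\equiv0$, $w_0>0$). Thus $c$ has a strict local maximum at $s=0$: both branches $s\gtrless0$ lie in $c<c_*$, and $c'(s)\neq0$ for $0<|s|<\eps_*$ (otherwise $u(s)$ would be degenerate, impossible for small $s\neq0$ since only the principal eigenvalue can vanish and it does so only at $s=0$). Hence each branch is a $C^1$ graph over $c\in\,]c_*-\eps_*,c_*[$, the two meeting at $\{{\bm p}_*\}$ with $\eps_*>0$. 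Finally, differentiating $-L_{u(s)}\varphi(s)=\mu(s)\varphi(s)$ at $s=0$ and projecting on $w_0$ yields $\mu'(0)=\int f''(u_*)w_0^3\big/\int w_0^2>0$; as the next eigenvalue stays positive near $s=0$, the solution is stable for $s>0$ (this is ${\bm m}^*$) and has Morse index one for $s<0$ (this is ${\bm m}^\sharp$). The main obstacle is pinning down the sign of $\int hw_0$, on which the whole fold picture---two solutions for $c<c_*$ and the exchange of stability---depends; I resolve it cleanly through identity (\ref{signc}) and $c_*>0$ rather than by any a posteriori geometric appeal.
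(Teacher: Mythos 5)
Your proposal is correct and follows essentially the same route as the paper: parametrize by the kernel component, apply the Implicit Function Theorem, and read the fold off the signs of $c''$ and $\mu'$ at the turning point; your formulas $c''(0)=-\int f''(u_*)w_0^3\big/\int hw_0$ and $\mu'(0)=\int f''(u_*)w_0^3\big/\int w_0^2$ are exactly the paper's $\cc''(t_*)$ and $\mu'(t_*)$. Two differences deserve comment. First, the paper applies the Implicit Function Theorem to the augmented map $\tG(t,y,c,w,\mu)$, which carries the eigenpair $(w,\mu)$ as unknowns, so the $C^1$ dependence of the principal eigenvalue along the solution curve is part of the IFT output; you apply the IFT only to the equation and then differentiate the eigenvalue problem $-L_{u(s)}\varphi(s)=\mu(s)\varphi(s)$ in $s$, which tacitly assumes differentiable dependence of the eigenpair on $s$. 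That is a standard fact for a simple principal eigenvalue (and is proved by precisely the augmented-system device the paper uses), but as written it is an unjustified step — the one place where the paper's formulation does work that your write-up borrows implicitly. Second, your treatment of the transversality condition $\int hw_0\neq 0$ is more circuitous than needed: under hypothesis {\bf (b)} one has $h\geq 0$ with $h>0$ on a set of positive measure, and $w_0>0$ in $\Omega$, so $\int hw_0>0$ is immediate; there is no need to route through (\ref{signc}) and $c_*>0$ from Theorem~\ref{big}. Your route is nonetheless valid and non-circular (Theorem~\ref{big} does not rely on Lemma~\ref{turn}), but calling this sign ``the crux'' overstates it — in the paper's setting it is the hypothesis on $h$, not a delicate point. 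The remaining items — exhaustiveness of the curve near ${\bm p}_*$, nondegeneracy for $s\neq 0$, the graph property over $\,]c_*-\eps_*,c_*[\,$, and the assignment of Morse indices to the two branches — match the paper's argument.
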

\begin{proof}[Sketch of the proof]\footnote{For the full proof see the Appendix~\ref{appendix}.}
 This lemma is known, but we sketch a proof
 adapted to our framework.
Let $(c_*,u_*)$ be a degenerate solution with Morse index equal to zero.
Let 
$t_*$ and $y_*$ be such that
$u_*=t_*w_*+y_*$, with $w_*\in S$ satisfying $\Delta w_*+aw_*-f'(u_*)w_*=0$, $w_*>0$, and $y_*\in\rr_{w_*}$.
We let $\tG:\R\times\rr_{w_*}\times\R\times S\times\R\to L^p(\Omega)\times L^p(\Omega)$ be defined by
\begin{eqnarray*}
\tG(t,y,c,w,\mu)&=&(\Delta\twzy+a\twzy-f\twzy-ch,\\ &&\ \Delta w+aw-f'\twzy w+\mu w).
\end{eqnarray*}
We may use the Implicit Function Theorem to describe the solutions of $\tG=0$
in a neighborhood of $(t_*,y_*,c_*,w_*,0)$. They lie
on a curve $t\mapsto(t,\y(t),\cc(t),\w(t),\mu(t))$. 
It is impossible for $\max_\Omega\twyz\leq M$ because otherwise $\Delta w_*+aw_*=0$ with $w_*>0$.
Differentiating $$\tG(t,\y(t),\cc(t),\w(t),\mu(t))=(0,0)$$
with respect to $t$,
$$
\mu'(t_*)=\frac{\int f''\twyz w_*^3}{\int w_*^2}>0,
$$
$\cc'(t_*)=0$ and
$$
\cc''(t_*)=-\,\frac{\int f''\twyz w_*^3}{\int hw_*}<0.
$$
This is formula (2.7) of \cite{OSS1}.
We recall from the proof of Theorem~\ref{thmd}, a degenerate solution
with $a>\lambda_1$ has $c_*>0$. As $t$ increases from $t_*$,
$\cc(t)$ decreases and the solution becomes stable. 
So the ``end" of ${\cal M}^*$ coincides with the piece of curve parametrized by
$t\mapsto(c(t),tw_*+\y(t))$, for $t$ in a right neighborhood of $t_*$.
A parametrization of ${\bm m}^\sharp$ is obtained by taking $t$ in a left neighborhood of $t_*$.
\end{proof}

Let $-\infty=:\lambda_0<\lambda_1<\lambda_2<\ldots<\lambda_i<\ldots$,  
denote the eigenvalues of the Dirichlet Laplacian on $\Omega$.

\begin{Lem}[Solutions around zero, {\cite[Theorem~3.3]{OSS1}}, {\cite[Theorem~2]{CP}}]\label{p3}
 Let $\lambda_i<a<\lambda_{i+1}$ for some $i\geq 0$. 
There exists
a $C^1$ function $\breve{u}$ defined in $]-\breve{c},\breve{c}[$
such that $c\mapsto(c,\breve{u}(c))$ parametrizes a curve $\breve{\C}$ of solutions of {\rm (\ref{a})} passing at
$(0,0)$. The solutions on $\breve{\C}$ have Morse index equal to 
the sum of the dimensions of the eigenspaces corresponding to $\lambda_1$ to $\lambda_i$. 
If $a$ is sufficiently close to $\lambda_1$,
$c_1<c_2$ implies $\breve{u}(c_1)<\breve{u}(c_2)$.
\end{Lem}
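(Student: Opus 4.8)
The plan is to obtain $\breve{\C}$ from the Implicit Function Theorem at the trivial solution $(0,0)$, then read the Morse index off the spectrum of the linearization, and finally extract the monotonicity from the first variation of $\breve{u}$ in $c$. First I set $F(c,u)=\Delta u+au-f(u)-ch$ and observe that $(0,0)$ solves $F=0$: by {\bf (ii)} we have $f\equiv 0$ on $]-\infty,M]$ with $M\geq 0$, so $f(0)=0$, and since $f\in C^2$ vanishes on a left neighbourhood of (or on) $0$, also $f'(0)=0$. Hence $D_uF(0,0)v=\Delta v+av$. As $\lambda_i<a<\lambda_{i+1}$, the number $a$ is not a Dirichlet eigenvalue, so $\Delta+a$ is an isomorphism of $\h$ onto $L^p(\Omega)$ by elliptic regularity. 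The Implicit Function Theorem then yields $\breve{c}>0$ and a $C^1$ map $\breve{u}:\,]-\breve{c},\breve{c}[\,\to\h$ with $\breve{u}(0)=0$ and $F(c,\breve{u}(c))=0$; this is the curve $\breve{\C}$, and after shrinking $\breve{c}$ these are the only solutions near $(0,0)$.

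For the Morse index, the linearization at $u=0$ is $v\mapsto-(\Delta v+av)$, whose eigenvalues are $\lambda_k-a$ for $k\geq 1$. These are negative exactly when $\lambda_k<a$, i.e.\ for $\lambda_1,\dots,\lambda_i$, and none is zero since $a$ is not an eigenvalue; thus the Morse index at $(0,0)$ is $\sum_{j=1}^{i}\dim E_{\lambda_j}$ and $(0,0)$ is nondegenerate. Because the eigenvalues of the linearization at $\breve{u}(c)$ vary continuously with $c$ and the spectrum has a gap around $0$ at $c=0$, after a further shrinking of $\breve{c}$ no eigenvalue crosses zero on $]-\breve{c},\breve{c}[$, so the Morse index is constant and equals the asserted value along $\breve{\C}$.

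It remains to treat the monotonicity, which is the relevant case $a>\lambda_1$ (so $i\geq 1$) with $a$ close to $\lambda_1$. Differentiating $F(c,\breve{u}(c))=0$ in $c$ gives $(\Delta+a-f'(\breve{u}(c)))\breve{u}'(c)=h$, so at $c=0$, where $\breve{u}=0$ and $f'(0)=0$, we have $\breve{u}'(0)=(\Delta+a)^{-1}h$. Expanding $h=\sum_k h_k\phi_k$ in the Dirichlet eigenfunctions with $\phi_1=\phi>0$ yields $\breve{u}'(0)=\sum_k\frac{h_k}{a-\lambda_k}\,\phi_k$, where $h_1=\int h\phi/\int\phi^2>0$ by {\bf (b)}. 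As $a\searrow\lambda_1$ the coefficient $\frac{h_1}{a-\lambda_1}$ blows up while the modes $k\geq 2$ stay uniformly bounded (their denominators are bounded away from $0$), so $\frac{a-\lambda_1}{h_1}\breve{u}'(0)\to\phi$ in $C^{1}(\bar{\Omega})$; since $\phi>0$ in $\Omega$ with negative exterior normal derivative, $\breve{u}'(0)>0$ for $a$ close enough to $\lambda_1$. Shrinking $\breve{c}$ and using continuity of $\breve{u}'$ together with the smallness of $f'(\breve{u})$ for small $c$, the sign persists, so $\breve{u}'(c)>0$ throughout and $\breve{u}$ is strictly increasing.

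The Implicit Function Theorem step and the Morse-index count are routine once $f'(0)=0$ and the non-resonance $a\neq\lambda_k$ are noted. The main obstacle is the sign of $\breve{u}'$: it requires isolating the singular first-eigenvalue contribution as $a\to\lambda_1^+$, controlling the remaining spectral modes uniformly in $C^1(\bar{\Omega})$ (so that Hopf's boundary behaviour of $\phi$ guarantees strict interior positivity up to the boundary), and then propagating the pointwise inequality from $c=0$ to the whole interval $]-\breve{c},\breve{c}[$.
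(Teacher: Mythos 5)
Your proposal is correct and takes essentially the same route as the paper's own proof: the Implicit Function Theorem at the nondegenerate trivial solution, the Morse index computed at $u=0$ and propagated along $\breve{\C}$ by continuity of the spectrum, and the sign of $\breve{u}'(0)=(\Delta+a)^{-1}h$ obtained by isolating the singular $\phi$-mode as $a\searrow\lambda_1$ --- your eigenfunction expansion is exactly the paper's splitting $v=\frac{\int h\phi}{a-\lambda_1}\phi+(\Delta+a)^{-1}\left[h-\left(\int h\phi\right)\phi\right]$ written out term by term, with the tail controlled in $C^1(\bar{\Omega})$ by the same uniform invertibility of $\Delta+a$ on the complement of $\phi$. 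The only cosmetic difference is that the paper deduces the monotonicity from Taylor's formula $\breve{u}(c)=cv+o(c)$ rather than from the continuity of $\breve{u}'$, which is an equivalent use of the $C^1$ regularity of the branch.
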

We refer to the cited works for the proof of this lemma.\footnote{For the full proof see Appendix~\ref{appendix}.}
 The assertion on the Morse index follows from its continuity on $\breve{u}$.

\begin{Lem}[Mountain pass solutions, \cite{CT}]\label{mp}
 Let $a>\lambda_1$ and $c<c_*(a)$. Then {\rm (\ref{a})} has at least two solutions, the stable solution and 
 a mountain pass solution.
\end{Lem}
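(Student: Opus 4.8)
The plan is to obtain the second solution as a mountain pass critical point of the energy functional
\[
I(u)=\tfrac12\int\bigl(|\nabla u|^2-au^2\bigr)+\int F(u)+c\int hu,\qquad F(u)=\int_0^uf(s)\,ds,
\]
whose critical points in $H^1_0(\Omega)$ are precisely the weak solutions of~(\ref{a}); elliptic regularity (\cite[Lemma~9.17]{GT}) then places any such solution in $\h$.

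First I would check that the stable solution $u^*(c)$ furnished by Theorem~\ref{big} (which exists since $c<c_*(a)$) is a strict local minimum of $I$. Stability means the least eigenvalue $\mu$ of the linearized operator $-\Delta-a+f'(u^*(c))$ is positive, and since $f'\geq0$ the second variation $Q(v)=\int|\nabla v|^2-\int(a-f'(u^*(c)))v^2$ satisfies both $Q(v)\geq\mu\|v\|_{L^2}^2$ and $\int|\nabla v|^2\leq Q(v)+a\|v\|_{L^2}^2$; combining these gives $Q(v)\geq\frac{\mu}{\mu+a}\int|\nabla v|^2$, so $Q$ is coercive on $H^1_0(\Omega)$ and $u^*(c)$ is an isolated local minimizer.

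Next I would locate a point below the minimum far away. Because $f(s)=0$ for $s\leq M$ with $M\geq0$, we have $F\equiv0$ on $(-\infty,0]$, so along the ray $u^*(c)-t\phi$ the superlinear term is eventually inactive (the map $t\mapsto\int F(u^*(c)-t\phi)$ is nonincreasing and nonnegative, hence bounded), and
\[
I\bigl(u^*(c)-t\phi\bigr)=\tfrac{t^2}{2}(\lambda_1-a)\!\int\phi^2+O(t)\longrightarrow-\infty
\]
as $t\to+\infty$, since $a>\lambda_1$. Hence there is $T>0$ with $I(u^*(c)-T\phi)<I(u^*(c))$, and together with the strict local minimum this yields the mountain pass geometry.

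The main obstacle is compactness. I would verify the Palais--Smale condition for $I$: convexity of $f$ (assumption~{\bf (iii)}) together with $f(0)=0$ gives $F(u)\leq\tfrac12f(u)u$ for every $u$, so $I(u_n)-\tfrac12I'(u_n)[u_n]$ controls the nonnegative superlinear quantity $\int\bigl(\tfrac12f(u_n)u_n-F(u_n)\bigr)$, while on the region where $u_n\leq M$ the nonlinearity disappears and $-\Delta-a$ is indefinite only on the finite-dimensional span of the eigenfunctions with eigenvalue below $a$; these two facts bound a Palais--Smale sequence in $H^1_0(\Omega)$. Equivalently, one may truncate $f$ above the level $K_a$ of~(\ref{ka}) as in~(\ref{fk}), apply the mountain pass theorem to the resulting functional $I_{K_a}$, and recover a genuine solution of~(\ref{a}) from the a priori bound $u\leq K_a$ (cf.\ Remark~\ref{unico}); this is exactly the framework of~\cite{CT}, from which the mountain pass solution follows. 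Since the mountain pass value is $\geq\inf_{\|v-u^*(c)\|=\rho}I>I(u^*(c))$, the resulting critical point lies strictly above the local minimum and is therefore distinct from the stable solution $u^*(c)$, giving the two solutions claimed.
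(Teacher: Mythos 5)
Your outline has the right skeleton, and your fallback route (truncate $f$, run the mountain pass theorem from the stable solution, recover a solution of (\ref{a}) by the maximum principle) is in fact the paper's proof; but as written both of your routes to compactness have gaps, one of them decisive. The direct route cannot work as stated: hypotheses {\bf (i)}--{\bf (iv)} impose no growth restriction on $f$ beyond superlinearity, so for $N\geq 2$ the term $\int F(u)$ need not even be finite on $H^1_0(\Omega)$ and $I$ is not a $C^1$ functional there; truncation is essential, not an ``equivalent'' reformulation. Moreover, even granting a growth restriction, convexity with $f=0$ below $M$ gives only $F(u)\leq\tfrac12 f(u)u$, which is the borderline case $\theta=2$ of the Ambrosetti--Rabinowitz condition $\theta F\leq fu$, $\theta>2$: the quantity $I(u_n)-\tfrac12 I'(u_n)[u_n]$ bounds $\int\bigl(\tfrac12 f(u_n)u_n-F(u_n)\bigr)$ only by $C(1+\|u_n\|)$, and this difference can be much smaller than $\int f(u_n)u_n$, so the standard deduction that Palais--Smale sequences are bounded does not go through; ``these two facts bound a Palais--Smale sequence'' is an assertion, not an argument.

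The decisive gap is in the truncation level. You truncate at $K_a$ from (\ref{ka}), which is independent of $c$ and satisfies only $aK_a-f(K_a)=0$ (hence $f'(K_a)\geq a$ by convexity). The maximum-principle step you invoke to get $u\leq K_a$ for solutions of the truncated problem requires $au-f_{K_a}(u)-ch\leq 0$ on $\{u>K_a\}$; there one has $au-f_{K_a}(u)-ch=(a-f'(K_a))(u-K_a)-ch$, and when $c<0$ the term $-ch\geq 0$ has the wrong sign, so the inequality fails. The lemma must cover every $c<c_*(a)$, in particular arbitrarily negative $c$, and there the scheme genuinely collapses: by Theorem~\ref{big} the map $c\mapsto u^*(c)$ is decreasing, and integrating (\ref{a}) against $\phi$ shows $\max_\Omega u^*(c)\to+\infty$ as $c\to-\infty$, so for $c$ negative enough the stable solution itself exceeds $K_a$ and is no longer a critical point of $I_{K_a}$ --- the mountain-pass geometry around it is lost before you even start. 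This is exactly why the paper chooses $K=K(a,c)$ in (\ref{kkk}), satisfying both $f'(K)>a$ and $au-f(u)-ch\leq 0$ in $\Omega$ for all $u\geq K$: with that $c$-dependent level the maximum principle gives $u^*(c)\leq K$ and $u_1\leq K$, and your outline then goes through. Note finally that Remark~\ref{unico} concerns sequences of solutions of (\ref{a}) itself; it provides no a priori bound for solutions of a truncated equation, so it cannot be cited for the bound $u\leq K_a$.
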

\begin{proof} 
Choose $K=K(a,c)$ such that
\begin{equation}\label{kkk}
f'(K)>a\quad {\rm and}\quad (u\geq K\ \Rightarrow\ au-f(u)-ch\leq 0\ {\rm in}\ \Omega).
\end{equation}
Define $f_K$ by (\ref{fk}) with $K_a$ replaced by $K$.
By the maximum principle, $(a,u,c)$ is a solution of (\ref{a}) iff
$(a,u,c)$ is a solution of (\ref{d}) (with $f_{K_a}$ replaced by
$f_K$ and $\underline{c}$ replaced by $c$). It is easy to see that
$I_K$, defined by (\ref{I}), satisfies the Palais-Smale condition.
Indeed, $I_K'(u_n)\to 0$ implies $(\|u_n\|_{H^1_0(\Omega)})$ is bounded,
and then $(I_K(u_n))$ convergent implies $(u_n)$ has a convergent subsequence in $H^1_0(\Omega)$.
Also, $I_K(t\phi)\to-\infty$ as $t\to-\infty$.
As for the solution $(a,u^*(c),c)$ of (\ref{a}) we have $u^*(c)\leq K$, and $f$ (respectively $f'$)
coincides with $f_{K}$ (respectively $f_{K}'$) below $K$, $(a,u^*(c),c)$ is a stable in $\h$ solution of (\ref{d}).
It is also stable in $H^1_0(\Omega)$ because of \cite[Lemma~9.17]{GT}.
By the Mountain-Pass Lemma (\cite[Theorem~2.1]{AR}), there exists a solution $(a,u_1,c)$ of (\ref{d}).
By the maximum principle $u_1\leq K$. Again because
$f_{K}$ 
coincides with $f$ below $K$, 
$(a,u_1,c)$ is a solution of (\ref{a}).
\end{proof}
\begin{Thm}[Existence of at least three solutions for $\lambda_2<a\neq\lambda_i$ and small $|c|$]\label{i1}
 Suppose $f$ satisfies {\rm\bf (i)}-{\rm\bf (iv)}
and $h$ satisfies {\rm\bf (a)}.
 If $\lambda_i<a<\lambda_{i+1}$, for some $i\geq 2$, and $|c|$ is small, then {\rm (\ref{a})} has at least three solutions.
\end{Thm}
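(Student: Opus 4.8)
The plan is to exhibit three solutions for small $|c|$ --- a stable one, a mountain pass one, and the small one bifurcating from the origin --- and then to separate them by their Morse indices, using critical groups to deal with the mountain pass solution. Since $a>\lambda_1$, Theorem~\ref{thmcz} provides at $c=0$ the stable solution $u_\dagger=u_\dagger(a)$, which has Morse index $0$ and is therefore nondegenerate; the Implicit Function Theorem then continues it to a family $u^*(c)$ of stable (index $0$) solutions for $|c|$ small. (Working from Theorem~\ref{thmcz} this way avoids Theorem~\ref{big}, whose hypothesis \textbf{(b)} is not assumed here.) Independently, because $\lambda_i<a<\lambda_{i+1}$ with $a$ not an eigenvalue, Lemma~\ref{p3} supplies a solution $\breve u(c)$ through $(0,0)$ of Morse index $m:=\sum_{j=1}^{i}\dim\ker(-\Delta-\lambda_j)\geq 2$ (this is where $i\geq 2$ enters); as $a$ is not an eigenvalue the linearization at $u=0$ is invertible, so $\breve u(c)$ is nondegenerate for $|c|$ small.

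For the third solution I would reproduce the mountain pass construction of Lemma~\ref{mp}. Fixing $K=K(a,c)$ as in (\ref{kkk}) --- which only uses $h\in L^\infty$, hence only \textbf{(a)}, and may be taken uniform in $c$ for $|c|$ small --- the truncated functional $I_K$ on $H^1_0(\Omega)$ is $C^2$, satisfies Palais--Smale, has $I_K(t\phi)\to-\infty$, and admits $u^*(c)$ as a nondegenerate, hence strict, local minimum lying below $K$. The Mountain-Pass Lemma then yields a solution $u_1$ with $u_1\leq K$, solving (\ref{a}), whose critical value strictly exceeds $I_K(u^*(c))$; in particular $u_1\neq u^*(c)$. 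Enlarging $K$ and shrinking the bound on $|c|$, I arrange that $u^*(c)$, $\breve u(c)$ and $u_1$ all lie below the same $K$, so that they are simultaneously critical points of the one functional $I_K$, with the same Hessians as for (\ref{a}); in particular the Morse index of $\breve u(c)$ as a critical point of $I_K$ equals the value $m$ from Lemma~\ref{p3} (the passage between $H^1_0(\Omega)$ and $\h$ being handled by \cite[Lemma~9.17]{GT}).

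It remains to check the three are pairwise distinct. The pair $u^*(c),\breve u(c)$ is separated by the Morse indices $0$ and $m\geq 2$, and $u_1\neq u^*(c)$ was just noted. The step I expect to be the crux is $u_1\neq\breve u(c)$, and here I would argue by critical groups: assuming $u_1$ is isolated (otherwise (\ref{a}) has infinitely many solutions and we are done), a critical point produced by the Mountain-Pass Lemma for a $C^2$ functional satisfying Palais--Smale has $C_1(I_K,u_1)\neq 0$, whereas the nondegenerate critical point $\breve u(c)$ of Morse index $m\geq 2$ has its critical groups concentrated in degree $m$, so $C_1(I_K,\breve u(c))=0$. Hence $u_1\neq\breve u(c)$. (Equivalently, a mountain pass critical point has Morse index at most one, which already excludes the index-$m$ solution $\breve u(c)$.) Having three pairwise distinct solutions, the theorem follows. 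The main obstacle throughout is precisely this Morse-theoretic separation of the mountain pass solution from the high-index bifurcating solution; producing the three candidates is comparatively routine given the earlier results.
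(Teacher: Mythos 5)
Your proposal is correct, and its skeleton is the same as the paper's: the paper also produces exactly these three candidates (the stable solution, the mountain pass solution $u_1$ of Lemma~\ref{mp}, and $\breve{u}(c)$ from Lemma~\ref{p3}) and separates them by Morse index, citing \cite[Theorem~10.15]{R} for precisely the fact you mention as an "equivalent" alternative, namely that the mountain pass critical point has Morse index at most one in $H^1_0(\Omega)$. The difference is in how the separation and the stable branch are justified, and the difference is not cosmetic. The paper pins the index of $u_1$ to be \emph{exactly} one by excluding index zero: it invokes the uniqueness of stable solutions and the fact that degenerate index-zero solutions occur only at $c=c_*(a)$ --- i.e.\ Theorem~\ref{big}, Theorem~\ref{thmd}, and the formulation of Lemma~\ref{mp} itself in terms of $c<c_*(a)$, all of which are established under hypothesis {\bf (b)}, even though the theorem's statement (and the remark following it) claims only {\bf (a)} is needed. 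Your route repairs this tension: continuing $u_\dagger(a)$ from Theorem~\ref{thmcz} by the Implicit Function Theorem needs no hypothesis on $h$ beyond $h\in L^p$; the inequality $I_K(u_1)>I_K(u^*(c))$ distinguishes $u_1$ from the stable solution without any uniqueness statement; and the critical-group (or index $\leq 1$) property of the mountain pass point distinguishes $u_1$ from $\breve{u}(c)$ without knowing its index exactly. So your proof is genuinely hypothesis-{\bf (a)}-only, at the cost of redoing the mountain pass construction rather than quoting Lemma~\ref{mp}. Two small points to keep straight in a write-up: the nonvanishing of $C_1(I_K,u_1)$ requires the critical set at the mountain pass level to be discrete, not merely $u_1$ isolated (your parenthetical disposes of the complementary case, since all critical points of $I_K$ lie below $K$ by the maximum principle and hence solve (\ref{a})); and the strict local minimality of $u^*(c)$ in $H^1_0(\Omega)$ should be recorded as following from stability, since a positive first eigenvalue of the linearization makes the $H^1_0$ Hessian coercive.
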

Note here we do not need to assume $h$ satisfies {\rm\bf (b)}.
\begin{proof}[Proof of {\rm Theorem~\ref{i1}}]
Let $c<c_*(a)$.
From \cite[Theorem~10.15]{R},
the Morse index of $(a,u_1,c)$ is at most equal to one in $H^1_0(\Omega)$, and 
$(a,u_1,c)$ is degenerate if it has Morse index equal to zero. 
In the second case the Morse index of $(a,u_1,c)$ in $\h$ is zero and this solution is either
stable or degenerate in $\h$. But it cannot be stable in $\h$ because our analysis implies that, for fixed
$a$, there is at most one stable solution for each $c$ and $u_1\neq u^*(c)$.
It cannot be degenerate either because $c<c_*(a)$.
We conclude the Morse index of $(a,u_1,c)$ in $\h$ has to be equal to one.

Let $|c|<\min\{\breve{c},c_*\}$. Then (\ref{a}) has the stable solution (with Morse index equal to zero),
the mountain pass solution (with Morse index equal to one) and 
the solution $\breve{u}$ of Lemma~\ref{p3} (with Morse index at least equal to 2). 
\end{proof}
In case $\lambda_1<a<\lambda_2$, it follows from Theorem~\ref{l1l2}, which we will prove ahead, we have
$u_1=u^\sharp$. Furthermore, for small $|c|$, we have $u_1=\breve{u}$.

\section{Global bifurcation below $\lambda_2$}\label{below_l2}

In this section we obtain global bifurcation curves below $\lambda_2$.
We briefly treat the case $a\leq\lambda_1$.
Then we examine the situation in a right neighborhood of $\lambda_1$.
Finally we prove Theorem~\ref{l1l2}.

Let $(a,u,c)$ be a solution of (\ref{a}).
Consider the quadratic form
\begin{equation}\label{q}
Q_a(v):=\int\left(|\nabla v|^2-av^2+f'(u)v^2\right).
\end{equation}
For any $v\in\h$ with $L^2(\Omega)$ with norm equal to one,
$Q_a(v)\geq\lambda_1-a$. If $a<\lambda_1$, then $u$ is stable.
The study of the bifurcation curves for $a\leq\lambda_1$ is simple.
We only draw the final pictures in Figures~\ref{fig4} and \ref{fig5}.
If one decreases $a$ below $\lambda_1-\delta$ the linear part of the
curve for $c>0$ starts bending. This evolution is similar to the one
that happens from Figure~\ref{fig6} to Figure~\ref{fig1}. 
We should mention
the linearity of the branches for $c>0$ in Figure~\ref{fig4} is a
consequence of 
$$
u=c\left[\textstyle\frac{\int h\phi}{a-\lambda_1}\phi+(\Delta+a)^{-1}\left[h-\left(\int h\phi\right)\phi\right]\right],
$$
valid when the right hand side is less than or equal to $M$.
\begin{figure}
\centering
\begin{psfrags}
\psfrag{c}{{\tiny $c$}}
\psfrag{u}{{\tiny $u$}}
\psfrag{M}{{\tiny $M\phi$}}
\psfrag{N}{{\tiny $\!\!\!\!\!\!-\frac{M}{\beta}\phi$}}
\psfrag{d}{{\tiny $\cc(\lambda_2)$}}
\psfrag{a}{{\tiny $u(a)$}}
\includegraphics[scale=.34]{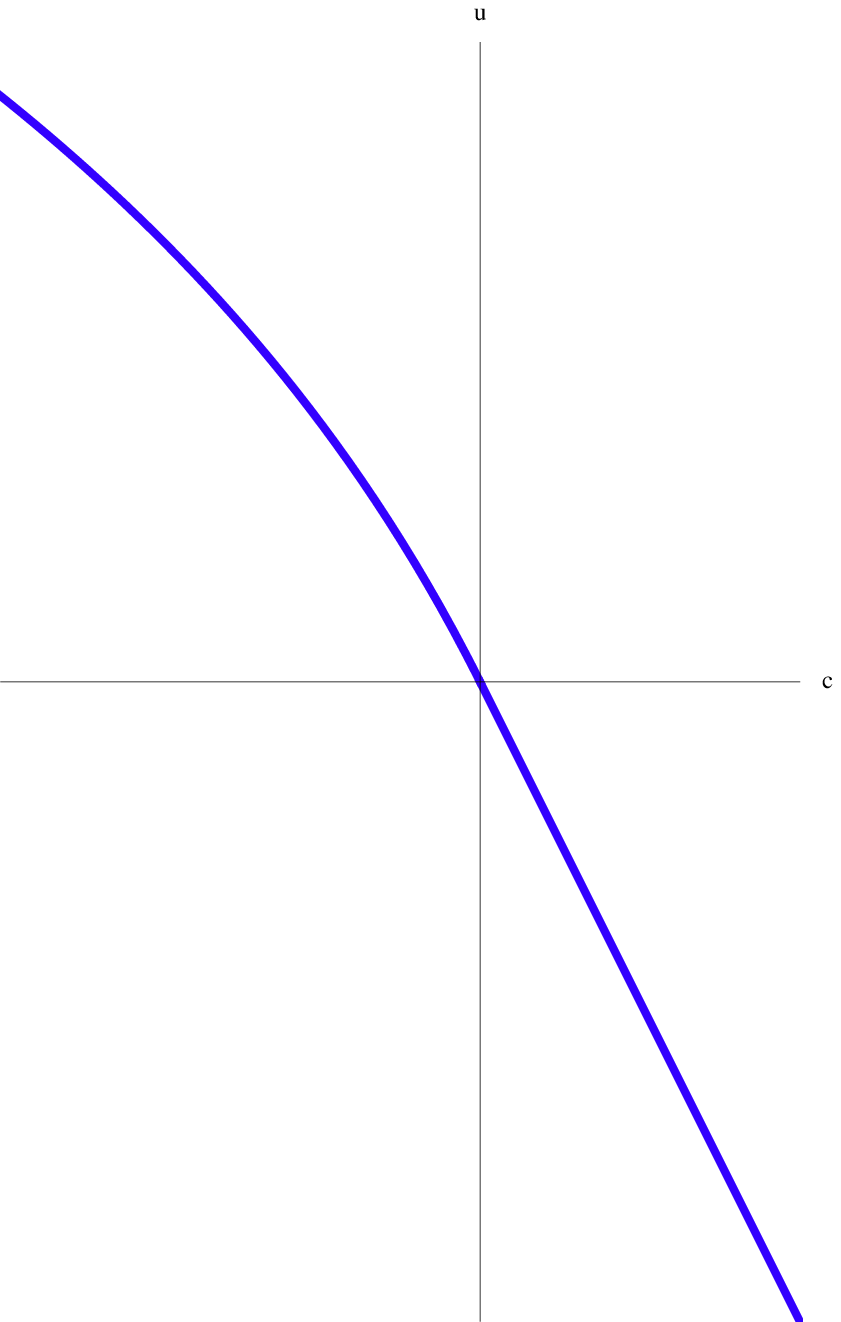}\ \ \ \ \ \ \ \ \ \ \ \ 
\includegraphics[scale=.32]{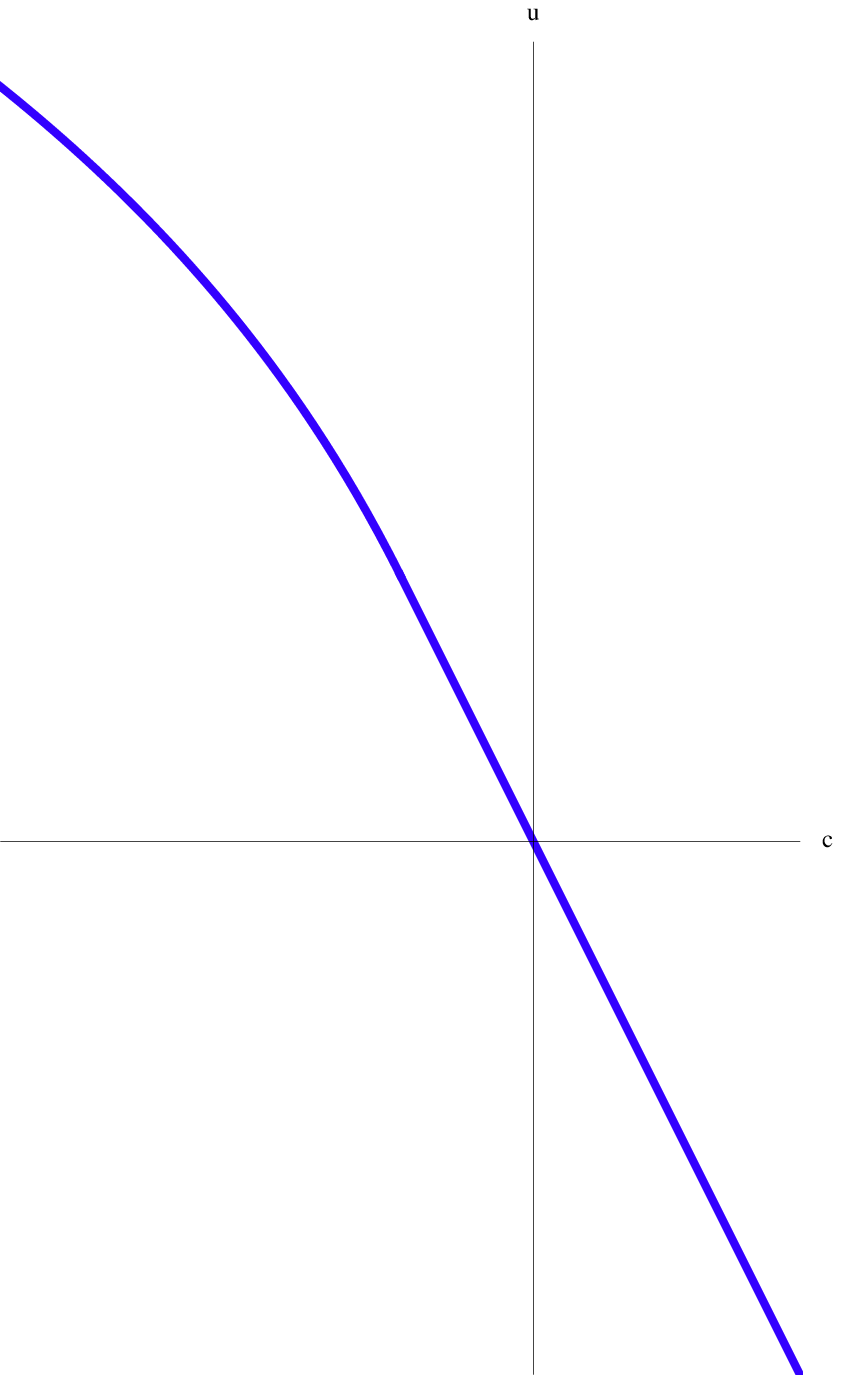}
\end{psfrags}
\caption{Bifurcation curve for $\lambda_1-\delta<a<\lambda_1$. 
On the left $M=0$ and on the right $M>0$.}\label{fig4}
\end{figure}
\begin{figure}
\centering
\begin{psfrags}
\psfrag{c}{{\tiny $c$}}
\psfrag{u}{{\tiny $u$}}
\psfrag{M}{{\tiny $M\phi$}}
\psfrag{N}{{\tiny $\!\!\!\!\!\!-\frac{M}{\beta}\phi$}}
\psfrag{d}{{\tiny $\cc(\lambda_2)$}}
\psfrag{a}{{\tiny $u(a)$}}
\includegraphics[scale=.43]{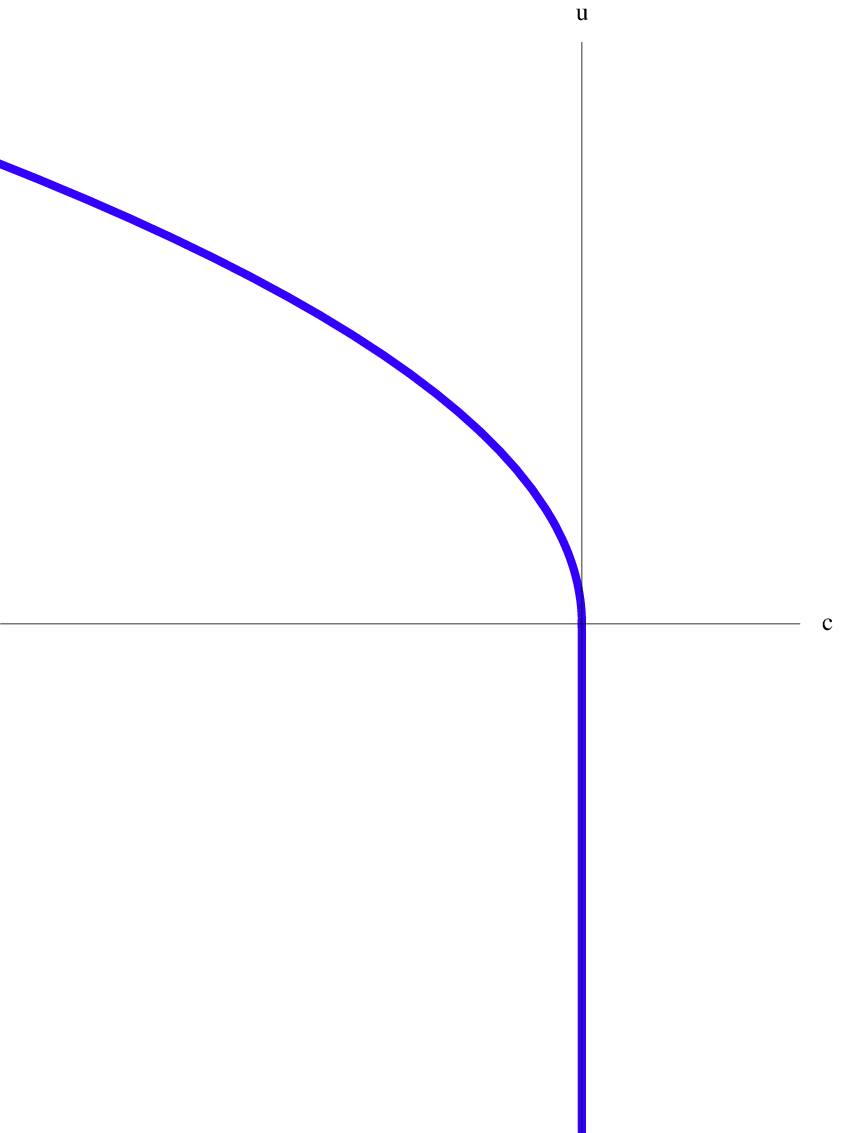}\ \ \ \ \ \ \ \ \ \ \ \ 
\includegraphics[scale=.32]{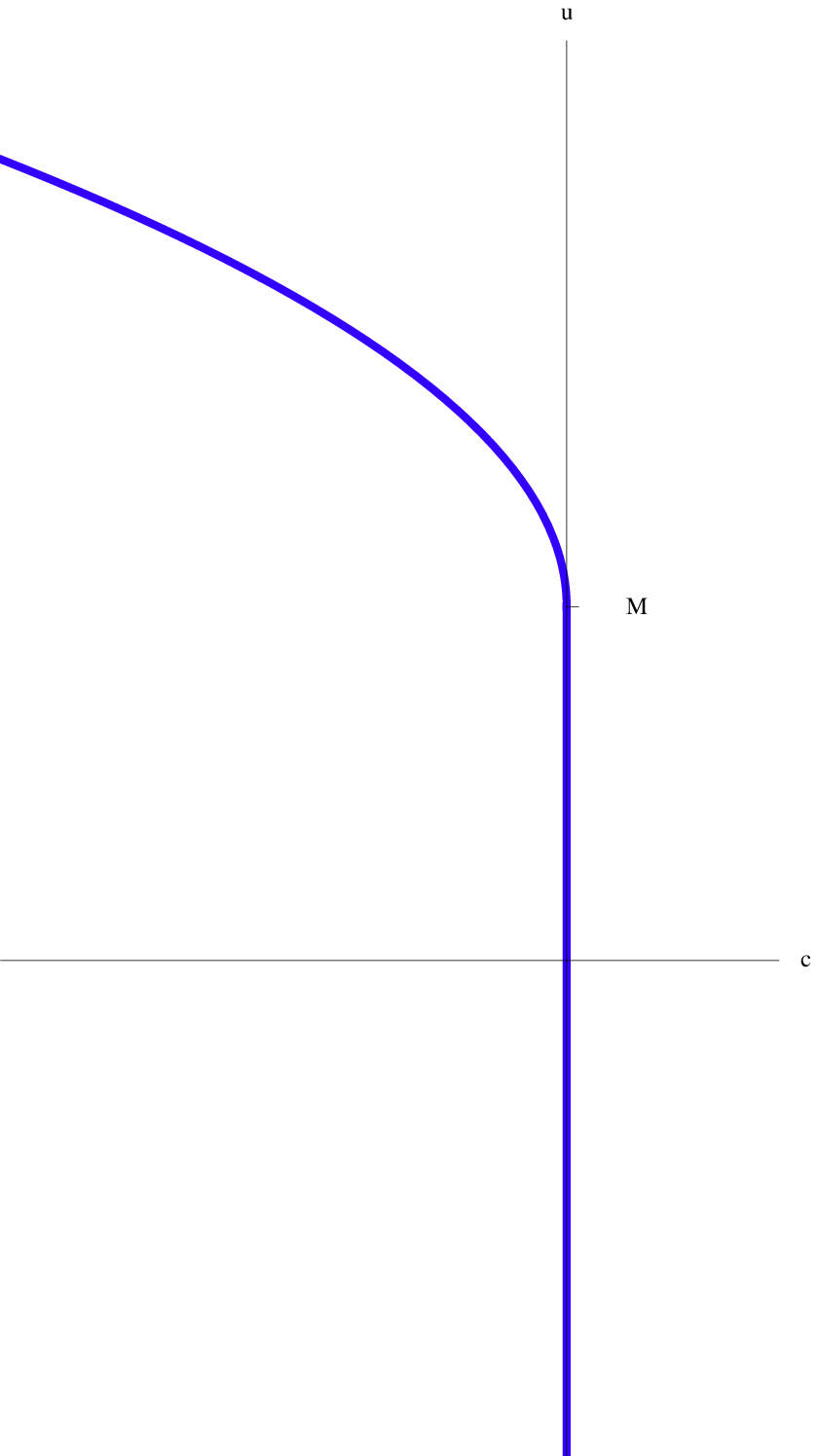}
\end{psfrags}
\caption{Bifurcation curve for $a=\lambda_1$. On the left $M=0$ and on the right $M>0$.}\label{fig5}
\end{figure}

We remark that 
multiplying both sides of (\ref{a}) by $\phi$ and integrating, we see
that when $u$ and $c$ are both nonnegative, $u$ not identically zero, then the constant $a$
is greater than or equal to $\lambda_1$.

We turn to the case $a>\lambda_1$.
\begin{Thm}[$\lambda_1\leq a<\lambda_1+\delta$]\label{global}
Suppose $f$ satisfies {\rm\bf (i)}-{\rm\bf (iv)}
and $h$ satisfies {\rm\bf (a)$'$}-{\rm\bf (b)}.
There exists $\delta>0$ such that the following holds.
The solutions $(a,u,c)$ of {\rm (\ref{a})} for $\lambda_1\leq a<\lambda_1+\delta$ and $c\geq 0$ can be parametrized
in the global chart
$$
\cS=\{(a,t)\in\R^2:\lambda_1\leq a<\lambda_1+\delta\ {\rm and}\ 0\leq t\leq\ttt(a)\}
$$
(with $\ttt(a)$ given in {\rm (\ref{t})}) by
$$
(a,t)\mapsto (a,u_\phi(a,t),\cc_\phi(a,t)),\ \ \  u_\phi(a,t)=t\phi+\y_\phi(a,t).
$$
Here $\y_\phi:\cS\to\rr$ and $\cc_\phi:\cS\to\R$ are $C^1$ functions.
We have 
$u_\phi(\lambda_1,t)=t\phi$ and $\cc_\phi(\lambda_1,t)=0$, for $t\in[0,\ttt(\lambda_1)]=[0,M]$; in addition
$u_\phi(a,0)=0$, $\cc_\phi(a,0)=0$, 
$u_\phi(a,\ttt(a))$ is the stable solution $u_\dagger(a)$ of {\rm (\ref{b})}, and $\cc_\phi(a,\ttt(a))=0$,
for $a\in\,]\lambda_1,\lambda_1+\delta[$.
For each fixed $a$, the map $t\mapsto\cc_\phi(a,t)$ is strictly increasing until the corresponding solution 
lies on the degenerate curve $\D_*$
of {\rm Section~\ref{sectiond}}, and then is strictly decreasing until zero.
The solutions $u_\phi(a,t)$ are strictly increasing with $t$, and so in particular are positive for $t\in\,]0,\ttt(a)]$.
\end{Thm}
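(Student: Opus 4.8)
The plan is to parametrize the solutions by the pair $(a,t)$, where $t=\frac{\int u\phi}{\int\phi^2}$ is the $\phi$-component of $u$, and to solve for the remaining unknowns $(y,c)\in\rr\times\R$ by the Implicit Function Theorem. Concretely I would consider the map $g:\cS\times\rr\times\R\to L^p(\Omega)$ defined by
$$
g(a,t,y,c)=\Delta(t\phi+y)+a(t\phi+y)-f(t\phi+y)-ch,
$$
and show that its partial derivative in $(y,c)$,
$$
D_{(y,c)}g\,(z,\gamma)=(\Delta+a-f'(u))z-\gamma h,\qquad u=t\phi+y,
$$
is an isomorphism from $\rr\times\R$ onto $L^p(\Omega)$ at every solution in $\cS$. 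This is a Lyapunov--Schmidt mechanism: the scalar $\gamma$ supplies exactly the one missing direction precisely when $\Delta+a-f'(u)$ is singular.

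First I would settle the base case. Multiplying (\ref{a}) by $\phi$ and integrating gives $(a-\lambda_1)\int u\phi=\int f(u)\phi+c\int h\phi$; since $f\geq0$, $c\geq0$ and $\int h\phi>0$, this forces $t\geq0$ when $a>\lambda_1$, while at $a=\lambda_1$ it forces both $c=0$ and $f(u)\equiv0$, whence $u=t\phi$ with $t\leq M$. On this segment $f'(u)\equiv0$, so $D_{(y,c)}g$ reduces to $(z,\gamma)\mapsto(\Delta+\lambda_1)z-\gamma h$, whose invertibility on $\rr\times\R$ I would check directly: projecting onto $\phi$ yields $\gamma=-\int r\phi/\int h\phi$, and then $(\Delta+\lambda_1)z=r+\gamma h$ is solvable uniquely in $\rr$. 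Because this inverse is bounded uniformly for $t\in[0,M]$, and the solutions in $\cS$ collapse to the segment $\{t\phi\}$ as $a\searrow\lambda_1$, the isomorphism persists throughout $\cS$ once $\delta$ is small. The resulting $C^1$ maps $\y_\phi,\cc_\phi$ are extended over all of $\cS$ using the uniform a priori bounds of Remark~\ref{unico} to prevent escape, and the endpoints are identified by the uniqueness in the IFT, matching $u_\phi(a,0)=0$ (since $t=0\Leftrightarrow c=0$ and $f(u)\equiv0$) and $u_\phi(a,\ttt(a))=u_\dagger(a)$, the stable solution of Section~\ref{noh}.

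For the monotonicity of $u_\phi$ in $t$ I would differentiate $g=0$ with respect to $t$, obtaining
$$
(\Delta+a-f'(u))v=(\partial_t\cc_\phi)\,h,\qquad v:=\partial_t u_\phi,\qquad \int v\phi=\int\phi^2>0.
$$
At $a=\lambda_1$ one has $v=\phi>0$; by continuity of $v$ in $C^{1,\alpha}(\bar\Omega)$ and Hopf's lemma, $v>0$ throughout $\cS$ for $\delta$ small, which gives the strict monotonicity and positivity of $u_\phi$ for $t>0$. Pairing the displayed identity with the positive first eigenfunction $w$ of the linearization, normalized by $(\Delta+a-f'(u))w=-\mu w$, yields
$$
\partial_t\cc_\phi=-\,\mu\,\frac{\int vw}{\int hw},
$$
so, since $\int vw>0$ and $\int hw>0$, the sign of $\partial_t\cc_\phi$ is opposite to that of $\mu$.

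Finally, the crossing analysis. Near $t=0$ the linearization is $\Delta+a$, whose smallest eigenvalue $\mu=\lambda_1-a<0$ gives Morse index one, while near $t=\ttt(a)$ the solution $u_\dagger(a)$ is stable, so $\mu>0$; hence $\mu$ changes sign, and at each sign change the solution is degenerate with Morse index zero, i.e.\ lies on $\D_*$. By the uniqueness in Theorem~\ref{thmd} there is exactly one such point for each $a$, so $\mu$ crosses zero once and $\cc_\phi(a,\cdot)$ is strictly increasing and then strictly decreasing, as claimed. I expect the main obstacle to be the uniform invertibility of $D_{(y,c)}g$ across the turning point on $\D_*$ — that is, verifying that $t$ stays a good parameter even where $c$ is stationary. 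This is exactly where the Lyapunov--Schmidt role of $\gamma h$, together with $\int hw\neq0$ and $\int\phi w\neq0$, is essential, and where the a priori bounds must be invoked to rule out escape of solutions as $t$ traverses $[0,\ttt(a)]$.
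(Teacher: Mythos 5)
Your chart construction via the map $g(a,t,y,c)=\Delta(t\phi+y)+a(t\phi+y)-f(t\phi+y)-ch$, the endpoint identifications by IFT uniqueness, the positivity of $\partial_t u_\phi$ by perturbation off $\phi$, and the sign relation $\partial_t\cc_\phi=-\mu\int vw\big/\!\int hw$ (the paper's formula (\ref{mut})) all coincide with the paper's own proof, and your crossing analysis (any zero of $\mu$ is a degenerate solution with Morse index zero, hence on $\D_*$, hence unique by Theorem~\ref{thmd} together with the injectivity of $t\mapsto u_\phi(a,t)$) is sound. Incidentally, the obstacle you flag at the end --- invertibility of $D_{(y,c)}g$ across the turning point --- is not a real one: it follows from your own perturbation argument, since isomorphism is an open condition and the surface stays near the base segment.

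The genuine gap is elsewhere: in the globality assertion, i.e.\ that \emph{every} solution $(a,u,c)$ with $\lambda_1\leq a<\lambda_1+\delta$ and $c\geq 0$ lies on the surface you construct. The Implicit Function Theorem gives uniqueness only in a neighborhood of the segment $\{(\lambda_1,t\phi,0):t\in[0,M]\}$ in $\R\times\h\times\R$; a solution whose $(a,t)$-coordinates lie in $\cS$ but whose components $(y,c)$ are far from $(0,0)$ is not excluded by it. You dispose of such solutions by asserting that ``the solutions in $\cS$ collapse to the segment $\{t\phi\}$ as $a\searrow\lambda_1$'' and by invoking Remark~\ref{unico} ``to prevent escape''. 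But Remark~\ref{unico} explicitly requires the sequence $(a_n)$ to be \emph{bounded away from} $\lambda_1$, so it yields nothing in precisely the regime you need, $a_n\searrow\lambda_1$ with $c_n\geq 0$. The collapse statement is in fact true, but it requires a separate blow-up argument --- normalize $v_n=u_n/\|u_n\|_{L^2(\Omega)}$, use {\bf (iv)} and the identity $(a_n-\lambda_1)\int u_n\phi=\int f(u_n)\phi+c_n\int h\phi$ to show any weak limit is a nonpositive multiple of $\phi$, and then contradict $\int u_n\phi\geq 0$ --- of the kind carried out in the proofs of Theorem~\ref{thmd} and Lemma~\ref{ultimo}. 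Nothing of this sort appears in your proposal, so as written it establishes the existence and the stated properties of the parametrized family, but not that the family exhausts the solution set.

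The paper closes this point by a different mechanism, which you could adopt: for each fixed $a\in\,]\lambda_1,\lambda_1+\delta[$ it follows any putative extra solution in the parameter $c$ --- legitimate because for fixed $a>\lambda_1$ Remark~\ref{unico} \emph{does} apply, and because for $a<\lambda_2$ solutions with Morse index one are nondegenerate, so degeneracy can only occur at $c=c_*(a)$ --- until it reaches the turning point ${\bm p}_*$; Lemma~\ref{turn} (exactly two branches meet at a degenerate solution with Morse index zero) then gives a contradiction, since both branches at ${\bm p}_*$ are already accounted for by your chart. Either insert this branch-following argument (or an appeal to Theorem~\ref{l1l2}), or supply the missing compactness lemma for $a_n\searrow\lambda_1$, $c_n\geq 0$.
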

\begin{Rmk}\label{rmk3}
In {\rm Theorem~\ref{global}} we may relax assumption {\bf (b)} to
\begin{itemize}
 \item[{\bf (b)$''$}] $\int h\phi>0$.
\end{itemize}
Incidentally, this is also true for {\rm Lemma~\ref{p3}} (where this assumption is used only in connection with
the last assertion).
\end{Rmk}
\begin{Rmk}
When $M=0$, as $a\searrow\lambda_1$ the curve 
$\{(c_\phi(a,t),u_\phi(a,t)):t\in[0,\ttt(a)]\}$ degenerates onto the point $(0,0)$. This case was studied in
{\rm \cite{OSS1}} for $f$ a quadratic function, as mentioned in the {\rm Introduction}. When $M>0$, as $a\searrow\lambda_1$ the curve 
$\{(c_\phi(a,t),u_\phi(a,t)):t\in[0,\ttt(a)]\}$ degenerates onto the segment
$\{(0,t\phi):t\in[0,M]\}$.
\end{Rmk}
Remark~\ref{rmk3} is in line with Theorem~1.3 of \cite{I}
({\bf (b)$''$} corresponds to \cite[formula~(1.9)]{I}).
Theorem~\ref{global} is illustrated in Figure~\ref{fig6}.

\begin{figure}
\centering
\begin{psfrags}
\psfrag{c}{{\tiny $c$}}
\psfrag{u}{{\tiny $u$}}
\psfrag{M}{{\tiny $M\psi$}}
\psfrag{N}{{\tiny $\!\!\!\!\!\!-\frac{M}{\beta}\psi$}}
\psfrag{d}{{\tiny $\cc(\lambda_2)$}}
\psfrag{a}{{\tiny $u(a)$}}
\includegraphics[scale=.3]{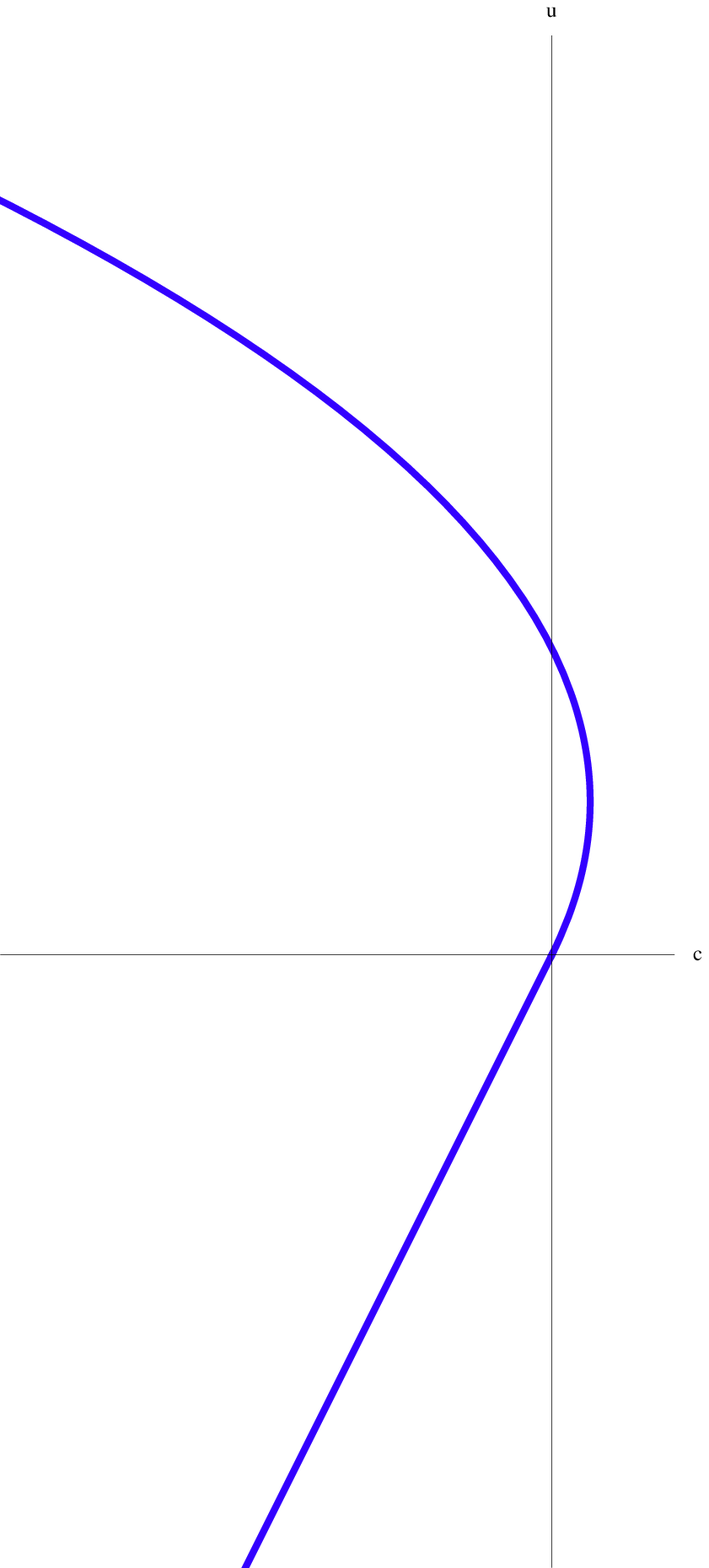}\ \ \ \ \ \ \ \ \ \ \ \ \ \ \ \ \ \ \ \ \ \ \ \ 
\includegraphics[scale=.37]{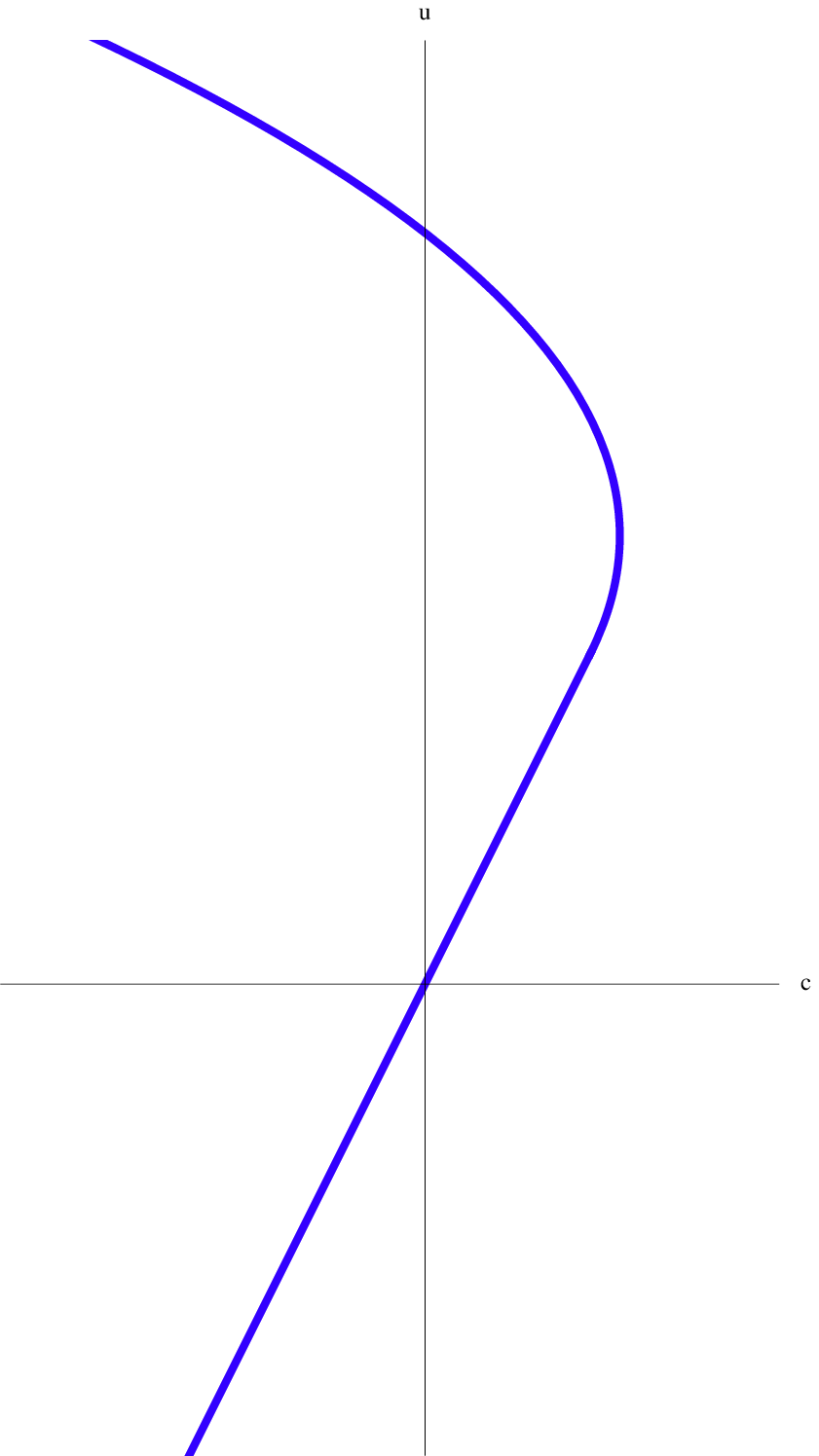}
\end{psfrags}
\caption{Bifurcation curve for $\lambda_1<a<\lambda_1+\delta$. On the left $M=0$ and on the right $M>0$.}\label{fig6}
\end{figure}

\begin{proof}[Proof of {\rm Theorem~\ref{global}}]
We write $u=t\phi+y$, with $y\in\rr$, and
we construct a surface of solutions of (\ref{a}) parametrized by $a$ and $t$.
Let
$$
\tg(a,t,y,c)=\Delta\tpy+a\tpy-f\tpy-ch.
$$
Let $t_0\leq M$.
Clearly $\tg(\lambda_1,t_0,0,0)=0$.  At $(\lambda_1,t_0,0,0)$,
$$
\tg_yz+\tg_c\gamma=\Delta z+\lambda_1 z-\gamma h=0
$$
implies $\gamma=0$ and $z=0$. 
The Implicit Function Theorem guarantees
in a neighborhood of $(\lambda_1,t_0,0,0)$
the solutions of $\tg=0$ lie on a surface
$(a,t)\mapsto(a,t,\y_\phi(a,t),\cc_\phi(a,t))$.
Let now $\lambda_1<a\neq\lambda_i$ for all $i>1$. At $(a,0,0,0)$,
$$
\tg_yz+\tg_c\gamma=\Delta z+az-\gamma h=0
$$
also implies $\gamma=0$ and $z=0$. The Implicit Function Theorem again guarantees
in a neighborhood of $(a,0,0,0)$
the solutions of $\tg=0$ lie on a surface
$(a,t)\mapsto(a,t,\y_\phi(a,t),\cc_\phi(a,t))$. In particular,
we have a surface of solutions defined in a neighborhood $\cN$ of
$$
\{(a,t)\in\R^2:(a=\lambda_1\ {\rm and}\ 0\leq t\leq M)\ {\rm or}\ (\lambda_1\leq a<\lambda_2\ {\rm and}\ t=0)\},
$$
such that the solutions on this surface are the unique solutions of the equation $\tg=0$ in a neighborhood
${\cal V}$ of $\{(\lambda_1,t,0,0):t\in[0,M]\}\cup\{(a,0,0,0):a\in[\lambda_1,\lambda_2[\}$.
Let $\delta>0$ be small enough so that $\cS$
is contained in $\cN$. 
For each $(a,t)\in\cS$, $(a,t\phi+\y_\phi(a,t),\cc_\phi(a,t))$ is a solution of (\ref{a}).

Let $\lambda_1<a<\lambda_1+\delta$ and $u_\dagger(a)$ be the stable solution in Section~\ref{noh}.
Suppose $(a,\ttt(a),u_\dagger(a)-\ttt(a)\phi,0)\in{\cal V}$.
Since $(a,\ttt(a),\y_\phi(a,\ttt(a)),\cc_\phi(a,\ttt(a)))\in{\cal V}$ and there is only one solution in ${\cal V}$
corresponding to each pair $(a,t)$, we have 
$\y_\phi(a,\ttt(a))=u_\dagger(a)-\ttt(a)\phi$, or
$u_\phi(a,\ttt(a)):=\ttt(a)\phi+\y_\phi(a,\ttt(a))=u_\dagger(a)$.
Now $(a,\ttt(a),u_\dagger(a)-\ttt(a)\phi,0)$ will belong to ${\cal V}$ for $a$ close to $\lambda_1$
and both $u_\dagger(a)$ and $u_\phi(a,\ttt(a))$ are continuous. So $u_\phi(a,\ttt(a))=u_\dagger(a)$ 
and $\cc_\phi(a,\ttt(a))=0$ for $\lambda_1<a<\lambda_1+\delta$.

We have $\frac{\partial u_\phi}{\partial t}(\lambda_1,t)=\phi$, for all $t\in[0,M]$, and the function $u_\phi$ is $C^1$.
By reducing $\delta$ if necessary, we may assume $\frac{\partial u_\phi}{\partial t}$ is a strictly positive function 
at each point of $\cS$.
So let $\lambda_1\leq a<\lambda_1+\delta$ and $0\leq t_1<t_2\leq\ttt(a)$. Then $u_\phi(a,t_1)<u_\phi(a,t_2)$, i.e.\ for fixed $a$,
$u_\phi$ is strictly increasing along the curve of solutions joining zero to the stable solution.

We know $\cc_\phi(\lambda_1,t)=0$ for $t\leq M$, and $\cc_\phi(a,0)=0$.
We also know $\cc_\phi(a,\ttt(a))=0$.
Differentiating $\tg(a,t,\y_\phi(a,t),\cc_\phi(a,t))=0$ with respect to $t$,
at $(a,0)$,
$$
\Delta z+az-\gamma h=-(a-\lambda_1)\phi.
$$
Here $z=\frac{\partial y_\phi}{\partial t}$ and $\gamma=\frac{\partial c_\phi}{\partial t}$ at $(a,0)$.
This implies
\begin{equation}\label{c}
\frac{\partial\cc_\phi}{\partial t}(a,0)=\frac{{\textstyle\int \phi^2}}{{\textstyle\int h\phi}}\,\,(a-\lambda_1).
\end{equation}
If we fix $\lambda_1<a<\lambda_1+\delta$ and start increasing $t$ from zero, (\ref{c}) shows that initially $\cc_\phi$ 
increases.
We denote by $(\mu_\phi(a,t),\w_\phi(a,t))$ the first eigenpair of the linearized problem at $u_\phi(a,t)$.
Another application of the Implicit Function Theorem shows this eigenpair has a $C^1$ dependence on
$(a,t)$. Differentiating (\ref{a}) with respect to $t$ and using the definition of $(\mu_\phi,w_\phi)$,
$$
\left\{\begin{array}{rcl}
\Delta v+av-f'(u_\phi)v&=&\gamma h,\\
\Delta w_\phi+aw_\phi-f'(u_\phi)w_\phi&=&-\mu_\phi w_\phi,
\end{array}\right.
$$
where $v=\frac{\partial u_\phi}{\partial t}$ and $\gamma$ is as before.
This implies
\begin{equation}\label{mut}
-\mu_\phi\int {\textstyle\frac{\partial u_\phi}{\partial t}}w_\phi={\textstyle\frac{\partial c_\phi}{\partial t}}\int hw_\phi.
\end{equation}
By (\ref{mut}), $\mu_\phi$ is negative as long as $\frac{\partial c_\phi}{\partial t}>0$. On the other hand the equality $\cc_\phi(a,\ttt(a))=0$,
implies $\frac{\partial c_\phi}{\partial t}(a,\bar{t}(a))=0$ for some $0<\bar{t}(a)<\ttt(a)$.
If $\frac{\partial\cc_\phi}{\partial t}(a,\bar{t}(a))$ vanishes, $(a,u_\phi(a,\bar{t}(a)),\cc_\phi(a,\bar{t}(a)))$
is a degenerate solution with Morse index equal to zero.
By the uniqueness statement of Theorem~\ref{thmd}, it belongs to $\D_*$ and
$\bar{t}(a)=\tau(a)$, where $\tau(a)$ is as in (\ref{ta}).
Now 
$\mu_\phi(a,t)<0$ for $0\leq t<\tau(a)$, and
$\mu_\phi(a,t)>0$ for $\tau(a)<t\leq\ttt(a)$ by Lemma~\ref{turn}, otherwise we would obtain more than one solution
on $\D_*$ for a fixed value of $a$. 
From (\ref{mut}), 
$\frac{\partial c_\phi}{\partial t}(a,t)>0$ for $0\leq t<\tau(a)$, and
$\frac{\partial c_\phi}{\partial t}(a,t)<0$ for $\tau(a)<t\leq\ttt(a)$.
Therefore, $\cc_\phi(a,t)$ increases strictly for $t$ in $[0,\tau(a)]$ and decreases strictly for $t$ in $[\tau(a),\ttt(a)]$. 

To see that there are no other solutions of (\ref{a}) for $\lambda_1\leq a<\lambda_1+\delta$ and $c\geq 0$,
we may argue by contradiction. If there were we could follow them using the parameter $c$ until $c_*(a)$.
A contradiction would result from Lemma~\ref{turn} (see the proof of Theorem~\ref{l1l2} ahead).
In alternative we could just appeal to Theorem~\ref{l1l2}.
\end{proof}

\begin{proof}[Proof of {\rm Remark~\ref{rmk3}}]
 Going back to the proof of Theorem~\ref{thmd}, reducing $\sigma$ if necessary,
the curve parametrized by $t\mapsto(a_*(t),t\phi+y_*(t),c_*(t))$ for $t\in\,]M,M+\sigma[$
may be parametrized in the form $a\mapsto(a,u_*(a),c_*(a))$, i.e.\ may be parametrized 
in terms of $a$. Under the weaker condition {\bf (b)$''$} the curve $\D_*$ may afterwards
turn back (so that it can no longer be parametrized in terms of $a$). 
But, arguing as in Lemma~\ref{L} ahead, afterwards it will stay away from $\{\lambda_1\}\times\h\times\R$.
So we may argue as in Section~\ref{pl2} to see $t\mapsto c_\phi(a,t)$ first increases and
then decreases, with no other oscillations, provided $\delta$ is chosen sufficiently small.
\end{proof}

For each fixed $a$, with $\lambda_1<a<\lambda_2$, a complete description of the solutions
of (\ref{a}) is given in Theorem~\ref{l1l2}.
\begin{proof}[Proof of {\rm Theorem~\ref{l1l2}}]
Fix $\lambda_1<a<\lambda_2$.
The Morse index of any solution $(a,u,c)$ of (\ref{a}) is less than or equal to one
and solutions with Morse index equal to one are nondegenerate. Indeed, suppose
$v\in\h$ is an eigenfunction for the linearized equation at $u$, with $L^2(\Omega)$ norm
equal to one, and $\mu$ is the corresponding eigenvalue.
Then $Q_a(v)=\mu$, where $Q_a$ is as in (\ref{q}).
The term $\int f'(u)v^2$ is nonnegative,
so if $Q_a$ was nonpositive on a two dimensional space, then $\int\left(|\nabla v|^2-av^2\right)$
would be nonpositive on that space, which is impossible, as $a<\lambda_2$.

We start at $(c,u)=(0,u_\dagger(a))$, where $u_\dagger(a)$ is the stable solution of
Section~\ref{noh}, and we start at $(c,u)=(0,0)$, a nondegenerate solution with
Morse index equal to one.
From Section~\ref{sectiond}, there exists only one degenerate solution
and that solution corresponds to a positive value of $c$.
Arguing as in the proof of Theorem~\ref{big}, we may use the Implicit Function Theorem
to follow the solutions, as $c$ increases, until they become degenerate.
From (\ref{upc}), this will have to happen, and for a positive value of $c$,
the value $c=\cc_*(a)$, where $\cc_*(a)$ is as in Theorem~\ref{thmd}.
And we may decrease $c$ from zero and follow the solutions for all negative values of $c$.

The upper bound for $c$ in inequality (\ref{upc}), and the analysis in Lemma~\ref{turn}, of the behavior
of solutions around a degenerate solution,
imply there can be no other branch of solutions besides the one
that goes through $(0,u_\dagger(a))$ and $(0,0)$. 
\end{proof}

\section{Global bifurcation at $\lambda_2$}\label{at_l2}

In addition to the previous hypotheses ($f$ satisfies {\bf (i)-(iv)} and $h$
satisfies {\bf (a)-(b)}),
henceforth we assume the domain $\Omega$ is such that {\bf ($\bm\alpha$)} holds and the
function $h$ satisfies {\bf (c)}. Recall the definition of $\beta$ in (\ref{beta}).
 
\begin{Lem}[Morse indices and nondegeneracy at $\lambda_2$]\label{oned}
 For $a=\lambda_2$ solutions of {\rm (\ref{a})} have Morse index less
than or equal to one. All solutions with Morse index equal to one are nondegenerate,
except $(\lambda_2,u,c)=(\lambda_2,t\psi,0)$ for $t\in\bigl[-\frac{M}{\beta},M\bigr]$.
\end{Lem}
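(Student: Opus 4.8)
The plan is to prove both assertions of Lemma~\ref{oned} using the quadratic form $Q_a$ from (\ref{q}), now with $a=\lambda_2$. For any solution $(\lambda_2,u,c)$ and any eigenfunction $v$ of the linearized problem with $\int v^2=1$ and eigenvalue $\mu$, we have $Q_{\lambda_2}(v)=\mu$. The crucial structural fact is that $\int f'(u)v^2\geq 0$, since $f'\geq 0$ by {\bf (iii)} together with $f'(u)=0$ for $u\leq M$. Hence $Q_{\lambda_2}(v)\geq\int\bigl(|\nabla v|^2-\lambda_2 v^2\bigr)$. First I would establish the Morse index bound: if $Q_{\lambda_2}$ were nonpositive on a two-dimensional subspace $V\subset\h$, then $\int\bigl(|\nabla v|^2-\lambda_2 v^2\bigr)\leq 0$ on all of $V$, which is impossible because the Dirichlet quotient exceeds $\lambda_2$ strictly on the orthogonal complement of $\mathrm{span}\{\phi,\psi\}$, and on any two-dimensional space it cannot stay below $\lambda_2$ unless it equals $\mathrm{span}\{\phi,\psi\}$, on which the quotient is $\lambda_2$ only in the $\psi$-direction. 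This mirrors the argument in the proof of Theorem~\ref{l1l2}, the only difference being that the borderline case $\lambda_2$ now genuinely arises.

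The heart of the lemma is the nondegeneracy claim for Morse index one, and here the borderline analysis is delicate. Suppose $(\lambda_2,u,c)$ has Morse index one and is degenerate; then there exist two linearly independent eigenfunctions, say $v_1$ with $\mu_1<0$ and $v_0$ with $\mu_0=0$. For the zero eigenfunction, $Q_{\lambda_2}(v_0)=0$, so $\int\bigl(|\nabla v_0|^2-\lambda_2 v_0^2\bigr)=-\int f'(u)v_0^2\leq 0$. Since the Dirichlet quotient is everywhere $\geq\lambda_1$ and the eigenspaces below and at $\lambda_2$ are spanned by $\phi,\psi$, a nonpositive value of $\int\bigl(|\nabla v_0|^2-\lambda_2 v_0^2\bigr)$ forces $v_0\in\mathrm{span}\{\phi,\psi\}$, and in fact equality throughout forces $\int f'(u)v_0^2=0$ and $v_0$ to lie in the $\lambda_2$-eigenspace, i.e.\ $v_0=\psi$ up to scaling. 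From $\int f'(u)\psi^2=0$ and $f'\geq 0$ I would deduce $f'(u)\equiv 0$ on the set where $\psi\neq 0$, hence (as $\psi\neq0$ a.e.) $f'(u)\equiv 0$, which by {\bf (ii)} means $u\leq M$ everywhere. Similarly the negative-eigenvalue direction $v_1$ must then interact with $\phi$: with $f'(u)\equiv 0$ the linearized equation is just $\Delta v+\lambda_2 v=0$, whose solutions in $\h$ are exactly $\mathrm{span}\{\psi\}$, so there is no room for a separate negative eigenvalue.

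Pushing this through, $f'(u)\equiv 0$ reduces (\ref{a}) to $-\Delta u=\lambda_2 u - ch$ together with $f(u)\equiv 0$ (since $u\leq M$ makes $f(u)=0$ by {\bf (ii)}), giving $-\Delta u=\lambda_2 u - ch$. Testing against $\psi$ and using $-\Delta\psi=\lambda_2\psi$ yields $0=-c\int h\psi$, and since $\int h\psi\neq 0$ by {\bf (c)} we conclude $c=0$. Then $-\Delta u=\lambda_2 u$ with $u\in\h$ forces $u=t\psi$ for some $t\in\R$, and the constraint $u\leq M$ combined with $\max_\Omega\psi=1$, $\min_\Omega\psi=-\beta$ pins $t$ to the interval $\bigl[-\frac{M}{\beta},M\bigr]$ (so that $t\psi\leq M$ everywhere, which is exactly $t\leq M$ and $-\beta t\leq M$). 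This identifies the exceptional degenerate Morse-index-one solutions as precisely $(\lambda_2,t\psi,0)$ for $t\in\bigl[-\frac{M}{\beta},M\bigr]$, matching the statement. I expect the main obstacle to be the step extracting $f'(u)\equiv 0$ cleanly: one must rule out a degeneracy where the null direction is only partially aligned with $\psi$, and carefully exploit the rigidity of the equality case $\int(|\nabla v_0|^2-\lambda_2 v_0^2)=-\int f'(u)v_0^2$ to force both $v_0=\psi$ and $f'(u)\equiv 0$ simultaneously, rather than treating them as independent possibilities.
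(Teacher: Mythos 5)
Your overall strategy --- reduce everything to $f'(u)\equiv 0$, then get $c=0$ from {\bf (c)} and $u=t\psi$ with $t\in\bigl[-\frac{M}{\beta},M\bigr]$ from $u\leq M$ --- is the paper's, and your final paragraph carries out that last reduction correctly. But the step that carries all the weight, extracting $f'(u)\equiv 0$, rests on a false claim. You assert that $\int\bigl(|\nabla v_0|^2-\lambda_2 v_0^2\bigr)\leq 0$ forces the null eigenfunction $v_0$ to lie in $\mathrm{span}\{\phi,\psi\}$, and then to be $\psi$ up to scaling. This is not true: writing $v_0=\alpha\phi+\beta\psi+\rrr$ with $\rrr$ orthogonal in $L^2(\Omega)$ to $\phi$ and $\psi$, one has $\int\bigl(|\nabla v_0|^2-\lambda_2 v_0^2\bigr)=\alpha^2(\lambda_1-\lambda_2)\int\phi^2+\int\bigl(|\nabla \rrr|^2-\lambda_2\rrr^2\bigr)$, so the strictly negative $\phi$-contribution can absorb a nonzero positive contribution from $\rrr$ (for instance $v_0=\phi+\eps\,\rrr_0$ with $\rrr_0$ a third eigenfunction and $\eps$ small has a negative value). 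Thus nonpositivity of the quadratic form on the single null direction pins down nothing; your closing sentence flags exactly this obstacle (a null direction ``only partially aligned with $\psi$'') but offers no mechanism to overcome it. The same overreach appears in your Morse-index paragraph, where you claim a two-dimensional subspace with $Q\leq 0$ must equal $\mathrm{span}\{\phi,\psi\}$; that is also false (consider $\mathrm{span}\{\phi+\eps\,\rrr_0,\psi\}$), though for the index bound only strict negative definiteness matters, and that part of your argument can be salvaged.

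The missing idea --- and this is how the paper proceeds --- is to use the negative direction and the null direction \emph{jointly}. Since the solution is degenerate with Morse index one, $Q_{\lambda_2}$ is nonpositive on the two-dimensional space ${\cal E}=\mathrm{span}\{v_1,v_0\}$, hence so is $Q(v)=\int\bigl(|\nabla v|^2-\lambda_2v^2\bigr)$. Inside a two-dimensional space one can always pick a nonzero combination whose $\phi$-component vanishes; that combination has the form $\chi\psi+\rrr$ with $\rrr$ orthogonal to $\phi$ and $\psi$, and for \emph{this} vector nonpositivity of $Q$ really does force $\rrr=0$, because $Q(\chi\psi+\rrr)=Q(\rrr)\geq(\lambda_3-\lambda_2)\int\rrr^2$ and there is no negative $\phi$-term left to compensate; linear independence gives $\chi\neq 0$. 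Hence $\psi\in{\cal E}$, and then $0\geq Q_{\lambda_2}(\psi)=\int f'(u)\psi^2\geq 0$, so $f'(u)\equiv 0$ because $\psi$ vanishes only on a null set. From there your third paragraph finishes the proof. Incidentally, your remark that after $f'(u)\equiv 0$ there is ``no room for a separate negative eigenvalue'' is backwards: the linearization at $t\psi$ is $-\Delta-\lambda_2$, which has the negative eigenvalue $\lambda_1-\lambda_2$ in the direction $\phi$; that is precisely why these exceptional solutions have Morse index equal to one, as the statement of the lemma requires.
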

\begin{proof}
Let $(\lambda_2,u,c)$ be a degenerate solution with Morse index equal to one.
The quadratic form $Q_{\lambda_2}$ in (\ref{q}) is nonpositive on a
two dimensional space ${\cal E}\subset\h$. The quadratic form 
$Q:H^1_0(\Omega)\to\R$, defined by
$Q(v):=\int\left(|\nabla v|^2-\lambda_2v^2\right)$,
is also nonpositive on ${\cal E}\subset H^1_0(\Omega)$. 

We claim ${\cal E}$
contains $\psi$. To prove the claim let $v_1$ and $v_2$ be two linearly independent 
functions in ${\cal E}$. We write 
\begin{eqnarray*}
v_1&=&\eta_1\phi+\eta_2\psi+\rrro,\\
v_2&=&\rho_1\phi+\rho_2\psi+\rrrt,
\end{eqnarray*}
with $\rrro$, $\rrrt$ orthogonal in $L^2(\Omega)$ to $\phi$ and $\psi$. If $\eta_1=0$, then
$\rrro=0$ because $Q$
is nonpositive on ${\cal E}$. In this case the claim is proved.
Similarly if $\rho_1=0$. Suppose now $\eta_1$ and $\rho_1$ are
both different from zero. The function $v:=v_2-\frac{\rho_1}{\eta_1}v_1$ is
of the form $\chi\psi+\rrr$, with $\rrr$ orthogonal to $\phi$ and $\psi$.
So $\rrr=0$. As $v$ cannot be zero, $\chi\neq 0$. The claim is proved.

As $\psi\in{\cal E}$ and $\psi^2\neq 0$ a.e.\ in $\Omega$,
it follows that $f'(u)\equiv 0$. So $u\leq M$. Thus
$$
\Delta u+\lambda_2u-ch=0.
$$
Hypothesis {\bf (c)} implies $c=0$. We obtain
$u=t\psi$ and, since $u\leq M$, $t\in\bigl[-\frac{M}{\beta},M\bigr]$.
On the other hand,
the solutions $(a,u,c)=(\lambda_2,t\psi,0)$ with $t\in\bigl[-\frac{M}{\beta},M\bigr]$
are degenerate and have Morse index equal to one.
\end{proof}
Recall the definition of $\rs$ in (\ref{s}).
\begin{Lem}[${\cal L}^{\flat\sharp}$, solutions around zero at $\lambda_2$]\label{zero}
 There exists $\delta>0$ and $C^1$ functions $\y^{\flat\sharp}:J\to{\cal S}$ and $\cfsup:J\to\R$, 
where 
$J=\bigl]-\frac{M}{\beta}-\delta,M+\delta\bigr[$, such that 
the map $t\mapsto(\lambda_2,t\psi+\y^{\flat\sharp}(t),\cfsup(t))$, defined in $J$, 
with $\y^{\flat\sharp}(t)= 0$ and $\cfsup(t)=0$ for $t\in\bigl[-\frac{M}{\beta},M\bigr]$,
parametrizes a curve ${\cal L}^{\flat\sharp}$ of solutions of {\rm (\ref{a})}. 
There exists a neighborhood of ${\cal L}^{\flat\sharp}$ 
in $\{\lambda_2\}\times\h\times\R$ such that the solutions of 
{\rm (\ref {a})} with $a=\lambda_2$ in this neighborhood lie on ${\cal L}^{\flat\sharp}$.
\end{Lem}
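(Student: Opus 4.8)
The plan is to use $t$ as the parameter and the Implicit Function Theorem to solve for the pair $(y,c)$. First I would record the elementary observation that, since $\max_\Omega\psi=1$ and $\min_\Omega\psi=-\beta$, one has $t\psi\leq M$ everywhere on $\Omega$ precisely when $t\in\bigl[-\frac{M}{\beta},M\bigr]$; for such $t$ we get $f(t\psi)\equiv 0$ by {\bf (ii)}, and, because $f\in C^2$ vanishes identically on $(-\infty,M]$, also $f'(t\psi)\equiv 0$. Hence $\Delta(t\psi)+\lambda_2(t\psi)=0$ shows $(\lambda_2,t\psi,0)$ is a solution of~(\ref{a}) for every $t\in\bigl[-\frac{M}{\beta},M\bigr]$, which will force the normalization $\y^{\flat\sharp}=\cfsup=0$ on that segment. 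I would then introduce $g:\R\times{\cal S}\times\R\to L^p(\Omega)$ by $g(t,y,c)=\Delta\tpsy+\lambda_2\tpsy-f\tpsy-ch$ and seek a branch $(y,c)=(\y^{\flat\sharp}(t),\cfsup(t))$ solving $g=0$.

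The heart of the argument is that $D_{(y,c)}g$ is an isomorphism from ${\cal S}\times\R$ onto $L^p(\Omega)$ at each base point $(t_0,0,0)$ with $t_0\in\bigl[-\frac{M}{\beta},M\bigr]$. Since $f'(t_0\psi)\equiv 0$ there, this derivative is $(z,\gamma)\mapsto\Delta z+\lambda_2 z-\gamma h$. For injectivity I would multiply $\Delta z+\lambda_2 z-\gamma h=0$ by $\psi$ and integrate by parts twice; the terms in $z$ cancel, so $\int h\psi\neq 0$ (hypothesis {\bf (c)}) gives $\gamma=0$, whereupon $\Delta z+\lambda_2 z=0$ together with simplicity of $\lambda_2$ (hypothesis {\rm $\bm(\bm\alpha\bm)$}) forces $z\in\mathrm{span}\,\psi$, and $z\in{\cal S}$ then yields $z=0$. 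For surjectivity, given $\xi\in L^p(\Omega)$ I would choose $\gamma=-\int\xi\psi/\int h\psi$ so that $\xi+\gamma h$ is orthogonal to $\psi$; the Fredholm alternative for the self-adjoint operator $\Delta+\lambda_2$, whose kernel is $\mathrm{span}\,\psi$, then produces a unique $z\in{\cal S}$ with $\Delta z+\lambda_2 z=\xi+\gamma h$, and \cite[Lemma~9.17]{GT} guarantees $z\in\h$. Thus the Implicit Function Theorem applies and yields, near each such $t_0$, a $C^1$ solution branch $t\mapsto(t\psi+\y^{\flat\sharp}(t),\cfsup(t))$.

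Finally I would globalize. The segment $\bigl[-\frac{M}{\beta},M\bigr]$ is compact, so finitely many of the local charts cover it; on overlaps the branches coincide by the local uniqueness built into the Implicit Function Theorem, and on the segment itself they must equal the trivial branch $(t\psi,0)$. Gluing produces single $C^1$ functions $\y^{\flat\sharp},\cfsup$ on an open interval $J=\bigl]-\frac{M}{\beta}-\delta,M+\delta\bigr[$, the $\delta$ arising from the charts centered at the two endpoints. The same local uniqueness, combined with the decomposition $u=t\psi+y$ with $t=\int u\psi/\int\psi^2$ and $y\in{\cal S}$, then furnishes the asserted neighborhood of ${\cal L}^{\flat\sharp}$ in $\{\lambda_2\}\times\h\times\R$ within which every solution lies on the curve. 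The point requiring the most care is precisely this gluing-and-uniform-neighborhood step: I must verify that the overlapping charts are genuinely consistent and that the uniqueness neighborhoods can be chosen uniform along the compact segment, so that one $\delta>0$ serves simultaneously for the parametrization and for the uniqueness claim.
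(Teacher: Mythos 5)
Your proposal is correct and follows essentially the same route as the paper: the paper's proof introduces exactly the same map $\hat{g}(t,y,c)=\Delta\tpsy+\lambda_2\tpsy-f\tpsy-ch$ on $\R\times\rs\times\R$, notes $\hat{g}(t_0,0,0)=0$ for $t_0\in\bigl[-\frac{M}{\beta},M\bigr]$, and invokes the Implicit Function Theorem to solve for $(y,c)$ in terms of $t$. Your injectivity/surjectivity verification of $D_{(y,c)}\hat{g}$ (using {\bf (c)} and the simplicity of $\lambda_2$) and the compactness-and-gluing step are precisely the details the paper leaves implicit in its sketch, and they are carried out correctly.
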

\begin{proof}[Sketch of the proof]
 Consider the function $\hat{g}:\R\times\rs\times\R\to L^p(\Omega)$, defined
by
$$
\hat{g}(t,y,c)=\Delta\tpsy+\lambda_2\tpsy-f\tpsy-ch.
$$
Let $t_0\in\bigl[-\frac{M}{\beta},M\bigr]$.
We know $\hat{g}(t_0,0,0)=0$. The lemma follows from the Implicit Function Theorem.
\end{proof}

\begin{Lem}[Solutions at $\lambda_2$ for $c=0$]\label{l2}
 Suppose $u$ is a solution of {\rm (\ref{b})} with $a=\lambda_2$.
Then either $u$ is a stable solution, or $u$ is a degenerate solution with
Morse index equal to one.
\end{Lem}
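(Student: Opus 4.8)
The plan is to split according to whether $\max_\Omega u\le M$ or $\max_\Omega u>M$ and to show that these two cases produce exactly the two alternatives in the statement. In the first case $u\le M$ everywhere, so $f(u)\equiv 0$ and {\rm (\ref{b})} reduces to $-\Delta u=\lambda_2 u$. Since $\lambda_2$ is simple with eigenspace spanned by $\psi$, this forces $u=t\psi$ for some $t\in\R$, and the requirement $t\psi\le M$ together with $\max_\Omega\psi=1$, $\min_\Omega\psi=-\beta$ gives $t\in\bigl[-\frac{M}{\beta},M\bigr]$. At such a $u$ one has $f'(u)\equiv 0$, so the linearized operator is $-\Delta-\lambda_2$, with eigenvalues $\lambda_i-\lambda_2$; exactly one is negative ($i=1$) and exactly one vanishes ($i=2$), so $u$ is degenerate with Morse index one. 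This is the second alternative.

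Next suppose $\max_\Omega u>M$. I claim $u$ is stable, and by Lemma~\ref{gM} it is enough to prove $u\ge 0$, i.e.\ $u^-\equiv 0$. Arguing by contradiction, assume $u^-\not\equiv 0$. Writing $Q(v)=\int(|\nabla v|^2-\lambda_2 v^2)$ as in the proof of Lemma~\ref{oned}, I would test {\rm (\ref{b})} against $u^+$ and against $u^-$. Multiplying by $u^+$ and integrating by parts gives $Q(u^+)=-\int f(u)u^+<0$, where the strict sign comes from $f(u)>0$ on the nonempty open set $\{u>M\}$. Multiplying by $u^-$, and using $f(u)=0$ on $\{u<0\}$ (since $M\ge 0$), gives $Q(u^-)=0$. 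Because $u^+$ and $u^-$ have disjoint supports, the space $V=\mathrm{span}(u^+,u^-)$ is two dimensional with vanishing cross terms, so $Q(au^++bu^-)=a^2Q(u^+)\le 0$ for all $a,b$.

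Now I would invoke the argument already carried out in the proof of Lemma~\ref{oned}: a two dimensional subspace on which $Q$ is nonpositive must contain $\psi$. Hence $\psi=au^++bu^-$ for some $a,b$. Restricting this identity to the nonempty open set $\{u>0\}$, where $u^+=u$ and $u^-=0$, yields $\psi=au$ there. If $a\ne 0$, then on $\{u>0\}$ the function $u$ satisfies $-\Delta u=\lambda_2 u$, and comparison with {\rm (\ref{b})} forces $f(u)\equiv 0$, i.e.\ $u\le M$ on $\{u>0\}$; but the maximum of $u$ exceeds $M$ and is attained at a point where $u>0$, a contradiction. If $a=0$, then $\psi$ vanishes on the open set $\{u>0\}$, contradicting unique continuation for $-\Delta\psi=\lambda_2\psi$. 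Either way we reach a contradiction, so $u\ge 0$ and, by Lemma~\ref{gM}, $u$ is stable.

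The main obstacle is precisely the exclusion, in the case $\max_\Omega u>M$, of solutions that change sign; the two remaining cases are bookkeeping. The decisive point is that the positive and negative parts of such a putative solution furnish two disjointly supported test functions on which $Q$ is nonpositive, which lets me re-use the two dimensional nonpositivity argument of Lemma~\ref{oned} to trap $\psi$ in their span and then contradict the equation on $\{u>0\}$. The computations of $Q(u^+)$ and $Q(u^-)$ are routine once one tests with $u^\pm$, so I expect no difficulty there.
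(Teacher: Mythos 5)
Your proof is correct, and its skeleton is the same as the paper's: split on whether $\max_\Omega u\le M$ or $\max_\Omega u>M$; in the second case test (\ref{b}) with $u^+$ and $u^-$ to get $Q(u^+)\le 0$ and $Q(u^-)=0$, then reuse the two-dimensional nonpositivity argument of Lemma~\ref{oned} to conclude that $\psi$ lies in the span of $u^+$ and $u^-$, and finally derive a contradiction from $u^-\not\equiv 0$.

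The only genuine divergence is the endgame. The paper stays inside the quadratic form: writing $\psi=\eta_1u^+-\eta_2u^-$ and using the disjointness of supports, $Q(\psi)=0$ expands to $\eta_1^2Q(u^+)+\eta_2^2Q(u^-)=0$; since $\psi$ changes sign one has $\eta_1\neq 0$, hence $Q(u^+)=0$, hence $\int f(u)u=0$, hence $u\le M$ everywhere, contradicting $\max_\Omega u>M$. You instead read the identity $\psi=au^++bu^-$ pointwise on the open set $\{u>0\}$: for $a\neq 0$ you compare the two PDEs there to force $f(u)\equiv 0$ on $\{u>0\}$, and for $a=0$ you invoke unique continuation for $\psi$. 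Both finishes are valid; the paper's is slightly more self-contained (it needs only the sign change of $\psi$, no unique continuation or analyticity), while yours localizes the contradiction and, as a by-product, your strict inequality $Q(u^+)<0$ would even let you shortcut to $a=0$ directly from $Q(\psi)=a^2Q(u^+)=0$, after which the sign-change of $\psi$ (or your unique continuation step) ends the proof in one line.
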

\begin{proof}
If $u$ is less than or equal to $M$, for then $u$ solves $\Delta u+\lambda_2 u=0$.
Thus $u=t\psi$ with $t\in\bigl[-\frac{M}{\beta},M\bigr]$ and $u$ is degenerate with
Morse index equal to one. It remains to consider the case $\max_\Omega u>M$.

 We multiply (\ref{b}) first by $u^-$ and then by $u^+$ to obtain
$$ 
\int\bigl(|\nabla u^-|^2-\lambda_2|u^-|^2\bigr)=0
$$ 
and
\begin{equation}\label{bom}
\int\bigl(|\nabla u^+|^2-\lambda_2|u^+|^2\bigr)=-\int f(u)u\leq 0.
\end{equation}
Thus
$$
\int|\nabla u^-|^2=\lambda_2\int|u^-|^2\quad
{\rm and}\quad
\int|\nabla u^+|^2\leq\lambda_2\int|u^+|^2.
$$
Since $u^-$ and $u^+$ are orthogonal in $L^2(\Omega)$ and in $H^1_0(\Omega)$,
the Dirichlet quotient,
$$
\frac{\int|\nabla v|^2}{\int v^2},
$$
is less than or equal to $\lambda_2$ on the space ${\cal E}\subset H^1_0(\Omega)$ spanned by $u^-$ and $u^+$
(with zero removed).
If $u^-\equiv 0$, then 
$u$ is a nonnegative solution of (\ref{b}) with maximum strictly greater than $M$. By Lemma~\ref{gM},
$u$ is stable.

Now we consider the case when $u^-\not\equiv 0$. Then the space ${\cal E}$ is two dimensional.
As in the proof of Lemma~\ref{oned}, ${\cal E}$ contains $\psi$.
Let $\eta_1$ and $\eta_2$ be such that
$$
\psi=\eta_1u^+-\eta_2u^-.
$$
The equality
$$
\int\bigl(|\nabla\psi|^2-\lambda_2|\psi|^2\bigr)=0
$$
implies
$$
\eta_1^2\int\bigl(|\nabla u^+|^2-\lambda_2|u^+|^2\bigr)=
-\eta_2^2\int\bigl(|\nabla u^-|^2-\lambda_2|u^-|^2\bigr)=0.
$$
Clearly $\eta_1\neq 0$, so
$$ 
 \int\bigl(|\nabla u^+|^2-\lambda_2|u^+|^2\bigr)=0.
$$ 
From (\ref{bom}),
$$
\int f(u)u=0.
$$
Therefore $u$ is less than or equal to $M$.
We have reached a contradiction. The lemma is proved.
\end{proof}

We are now in a position to give the 
\begin{proof}[Proof of {\rm Theorem~\ref{thmatl2}}]
Clearly $(\y^{\flat\sharp})'\bigl(-\frac{M}{\beta}\bigr)=0$ and $(y^{\flat\sharp})'(M)=0$.
Let $\delta$ be as in Lemma~\ref{zero}.
Reducing $\delta$ if necessary, we may assume $\max_\Omega (t\psi+\y^{\flat\sharp}(t))>M$ for $t\in\,]M,M+\delta[$, and
$\max_\Omega (t\psi+\y^{\flat\sharp}(t))>M$ for $t\in\,\bigl]-\frac{M}{\beta}-\delta,M\bigr[$.
By Lemma~\ref{oned}, and
further reducing $\delta$ if necessary to guarantee the Morse index remains equal to one, we may assume that
for $t\in\,\bigl]-\frac{M}{\beta}-\delta,M\bigr[\,\cup\,]M,M+\delta[$ the solutions
$(\lambda_2,t\psi+\y^{\flat\sharp}(t),\cfsup(t))$ are nondegenerate with Morse index equal to one.
Consequently, in a neighborhood of each one of these solutions, the curve
${\cal L}^{\flat\sharp}$ may be written in the form $(\lambda_2,u^{\flat\sharp}(c),c)$.
Therefore the restriction of $\cfsup$ to $\bigl]-\frac{M}{\beta}-\delta,M\bigr[$ must
be monotone, and the restriction of $\cfsup$ to $]M,M+\delta[$ must be monotone.

We start at $(c,u)=(0,u_\dagger(\lambda_2))$, where $u_\dagger(\lambda_2)$ is the stable solution of
Section~\ref{noh}. We may use the Implicit Function Theorem
to follow the solutions as $c$ increases, until 
$c=\cc_*(\lambda_2)$ where $\cc_*(\lambda_2)$ is as in Theorem~\ref{thmd}.
And we may decrease $c$ from zero and follow the solutions for all negative values of $c$.
The analysis in Lemma~\ref{turn}, of the behavior
of solutions around a degenerate solution,
shows that we may decrease $c$ from $\cc_*(\lambda_2)$ and follow a branch $(c,u^\sharp(c))$ of
solutions of Morse index equal to one. Lemma~\ref{oned} implies we may follow this branch 
until $c=0$. By Lemma~\ref{l2} when $c$ reaches zero the solution becomes degenerate.
When this happens Lemma~\ref{oned} says it must be of the form 
$t\psi$, with $t\in\bigl[-\frac{M}{\beta},M\bigr]$.
And Lemma~\ref{zero} says the branch must connect to the branch of solutions
described in that lemma. Without loss of generality we may assume
$\lim_{c\searrow 0} u^\sharp(c)=M\psi$. [(Otherwise we rename $\psi$ to $-\psi$).
But, in fact, we will see in the next section that this will be the case
if we choose $\psi$ so that $\int h\psi<0$].
So the restriction of $\cfsup$ to $]M,M+\delta[$ is increasing.
The restriction of $\cfsup$ to $\bigl]-\frac{M}{\beta}-\delta,M\bigr[$
must also be increasing so that $\cfsup$ is negative in this interval.
Otherwise we could increase $c$ to $c=\cc_*(\lambda_2)$ and obtain a contradiction.
Indeed, 
there would be another branch of solutions arriving
at the degenerate solution with Morse index equal to zero on the curve $\D_*$ of Section~\ref{sectiond},
and this would violate Lemma~\ref{turn}.
Since $\cfsup$ is negative in the interval $\bigl]-\frac{M}{\beta}-\delta,M\bigr[$,
Lemma~\ref{oned} implies we may use the parameter $c$ to follow this branch, which we call $(c,u^\flat(c))$,
for negative values of $c$. In fact,
with the aid of Remark~\ref{unico},
we may follow the branch $(c,u^\flat(c))$ for all negative values of $c$.
Clearly, there can be no other branches of solutions besides the one just described.
\end{proof}

\begin{proof}[Proof of {\rm Remark~\ref{rmk1}}]
Using the notations of Lemma~\ref{zero}, let
$u^{\flat\sharp}(t)=t\psi+y^{\flat\sharp}(t)$.
One easily computes 
$$
\frac{d c^{\flat\sharp}}{dt}(0)=0\qquad{\rm and}\qquad\frac{d y^{\flat\sharp}}{dt}(0)=0.
$$
By the chain rule,
$$
\frac{d u^{\flat\sharp}}{d c^{\flat\sharp}}(0)\ =\ \frac{d u^{\flat\sharp}}{d t}(0)
\frac{d t}{d c^{\flat\sharp}}(0)\ =\ \psi\times\frac{1}{0}\ =\ \infty.
$$

\end{proof}

\section{Bifurcation in a right neighborhood of $\lambda_2$}\label{pl2}

In this and the next sections, we will determine the bifurcation curves of (\ref{a}) when the value of $a$ is in a certain
right neighborhood of $\lambda_2$, which will not include $\lambda_3$.
By arguing as in Section~\ref{below_l2}, in this situation
any solution of {\rm (\ref{a})} will have 
Morse index less than or equal to two, and if it has Morse index equal to two then it
is nondegenerate.

Regarding solutions of (\ref{b}), and
analogously to Lemma~\ref{tp}, we have
\begin{Prop}[$\C_\ddagger$, solutions of {\rm (\ref{b})} bifurcating from $(\lambda_2,0)$]\label{tp2} 
Suppose $f$ satisfies {\rm\bf (i)}-{\rm\bf (iv)} and {\rm $\bm(\bm\alpha\bm)$} holds.
Suppose also $M>0$.
There exists $\delta>0$ and $C^1$ functions $a_\ddagger:J\to\R$ and $\y_\ddagger:J\to{\cal S}$,
where 
$J=\bigl]-\frac{M}{\beta}-\delta, 
M+\delta\bigr[$, such that 
the map $t\mapsto(a_\ddagger(t),t\psi+\y_\ddagger(t))$, defined in $J$, 
with $a_\ddagger(t)=\lambda_2$ and $\y_\ddagger(t)= 0$ for $t\in\bigl[-\frac{M}{\beta}, 
M\bigr]$,
parametrizes a curve $\C_{\ddagger}$ of solutions of~{\rm (\ref{b})}.
There exists a neighborhood of $\C_\ddagger\setminus\{(\lambda_2,0)\}$
in $\R\times\h$ such that the solutions of~{\rm (\ref{b})} in this neighborhood lie on $\C_\ddagger$.
\end{Prop}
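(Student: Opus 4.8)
The plan is to mirror the proof of Lemma~\ref{tp}, replacing the first eigenfunction $\phi$ by the second eigenfunction $\psi$ and the splitting along $\rr$ by the splitting along $\rs$. First I would record that, for every $t_0\in\bigl[-\frac{M}{\beta},M\bigr]$, the function $t_0\psi$ solves (\ref{b}) at $a=\lambda_2$: since $\max_\Omega\psi=1$ and $\min_\Omega\psi=-\beta$, the bound $t_0\in\bigl[-\frac{M}{\beta},M\bigr]$ forces $t_0\psi\leq M$ throughout $\Omega$, so $f(t_0\psi)\equiv 0$ and $\Delta(t_0\psi)+\lambda_2(t_0\psi)=0$ because $\psi$ is a $\lambda_2$-eigenfunction. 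This identifies the segment $\bigl\{(\lambda_2,t\psi):t\in\bigl[-\frac{M}{\beta},M\bigr]\bigr\}$ as a family of solutions and pins down $a_\ddagger(t)=\lambda_2$, $\y_\ddagger(t)=0$ there.

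Next I would introduce $g:\R^2\times\rs\to L^p(\Omega)$, $g(a,t,y)=\Delta\tpsy+a\tpsy-f\tpsy$, and solve $g(a,t,y)=0$ for the pair $(a,y)\in\R\times\rs$ as a function of the parameter $t$, applying the Implicit Function Theorem at each base point $(\lambda_2,t_0,0)$ with $t_0\in\bigl[-\frac{M}{\beta},M\bigr]\setminus\{0\}$. At such a point $t_0\psi\leq M$ gives $f'(t_0\psi)\equiv 0$, so the partial derivative in $(a,y)$ is the linear map $(\alpha,z)\mapsto\alpha t_0\psi+\Delta z+\lambda_2 z$ from $\R\times\rs$ to $L^p(\Omega)$. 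Pairing $\alpha t_0\psi+\Delta z+\lambda_2 z=0$ with $\psi$ and integrating by parts turns the $z$-terms into $\int z(\Delta\psi+\lambda_2\psi)=0$, leaving $\alpha t_0\int\psi^2=0$; hence $\alpha=0$ since $t_0\neq 0$. Then $\Delta z+\lambda_2 z=0$ forces $z$ to be a multiple of $\psi$ by simplicity of $\lambda_2$ (hypothesis~$\bm(\bm\alpha\bm)$), and $z\in\rs$ gives $z=0$, so the map is injective.

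Surjectivity follows by the same $\psi$-decomposition: the $\psi$-component of any target is matched by a suitable choice of $\alpha$ (using $t_0\neq 0$), while the component orthogonal to $\psi$ is attained by solving $\Delta z+\lambda_2 z=(\cdot)^{\perp}$ in $\rs$ through the Fredholm alternative, the kernel and cokernel of $\Delta+\lambda_2$ both being $\operatorname{span}\psi$ under~$\bm(\bm\alpha\bm)$. This yields $C^1$ functions $a_\ddagger,\y_\ddagger$ near each $t_0$ together with a neighborhood in which the solutions of (\ref{b}) lie on the curve. I would then patch the local pieces into one curve over $J=\bigl]-\frac{M}{\beta}-\delta,M+\delta\bigr[$: on overlaps they agree by the local uniqueness in the Implicit Function Theorem, and that same uniqueness forces $a_\ddagger\equiv\lambda_2$, $\y_\ddagger\equiv 0$ on $\bigl[-\frac{M}{\beta},M\bigr]$, since the explicit solution $t\psi$ lies in each neighborhood. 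Applying the theorem at the endpoints $t_0=-\frac{M}{\beta}$ and $t_0=M$ (both nonzero, hence covered) extends the curve slightly past them, producing the $\pm\delta$; the neighborhood of $\C_\ddagger\setminus\{(\lambda_2,0)\}$ is the union of the individual ones.

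The main obstacle is concentrated at a single point: the degeneracy at $t_0=0$. There the $a$-derivative $t_0\psi$ vanishes, the linearization loses injectivity, and the Implicit Function Theorem is unavailable; this is precisely the bifurcation point $(\lambda_2,0)$, which is why the statement only asserts a neighborhood of $\C_\ddagger\setminus\{(\lambda_2,0)\}$. As in the $\lambda_1$ case, a separate Crandall--Rabinowitz argument would be required to describe the solution set exactly at $(\lambda_2,0)$, but that is not needed here. I would also emphasize that the entire injectivity and surjectivity computation collapses without~$\bm(\bm\alpha\bm)$, so the simplicity of $\lambda_2$ is the one genuinely new ingredient relative to Lemma~\ref{tp}.
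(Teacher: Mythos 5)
Your proposal is correct and takes essentially the same approach as the paper: the paper's proof of this proposition is simply the remark that it is ``similar to the one of Lemma~\ref{tp}'', whose (appendix) argument is exactly the Implicit Function Theorem computation you perform, with $\phi$ and $\rr$ replaced by $\psi$ and $\rs$, injectivity obtained by pairing with $\psi$, and the base points restricted to $t_0\neq 0$. Your added emphasis that simplicity of $\lambda_2$ (hypothesis $\bm(\bm\alpha\bm)$) is the new ingredient making the kernel argument work, and that the degenerate point $t_0=0$ is precisely the bifurcation point $(\lambda_2,0)$ handled separately via Crandall--Rabinowitz, also matches the paper's discussion following the proposition.
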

The proof is similar to the one of Lemma~\ref{tp}.

We again refer to the classical paper \cite[Theorem~1.7]{CR} to assert $(\lambda_2,0)$ is
a bifurcation point.  The statement of
Proposition~\ref{tp2} also holds when $M=0$. In both cases, $M>0$ and $M=0$, in a neighborhood of 
$(\lambda_2,0)$ the solutions of~(\ref{b}) lie on $\lin\cup\C_\ddagger$.
Once more $\lin$ is the 
line parametrized by $a\mapsto (a,0)$.  

From Section~\ref{below_l2} we know the only solutions of (\ref{b})
for $\lambda_1<a<\lambda_2$ are zero and the stable solution.
From Section~\ref{at_l2} we know the only solutions of (\ref{b})
for $a=\lambda_2$ are the stable solution and $t\psi$ with $t\in \bigl[-\frac{M}{\beta},M\bigr]$.
Recalling that these (this in the case $M=0$) last solutions are (is) degenerate with Morse index equal to one,
it follows that $\A_\ddagger(t)>\lambda_2$ if $t\in\,
\bigl]-\frac{M}{\beta}-\delta,-\frac{M}{\beta}\bigr[\,\cup\,]M,M+\delta[$.

\begin{Lem}[$\D_\varsigma$, degenerate solutions with Morse index equal to one]\label{curve2} 
There exists $\sigma>0$ and $C^1$ functions $\afs:J\to\R$, $\yfs:J\to{\cal S}$, $\cfs:J\to\R$ and $\wfs:J\to
\{\zeta\in\h:\int \zeta^2=\int\psi^2\}$,
where 
$J=\bigl]-\frac{M}{\beta}-\sigma,M+\sigma\bigr[$, such that 
the map $t\mapsto(\afs(t),\ufs(t),\cfs(t))$, defined in $J$, where $\ufs(t)=t\psi+\yfs(t)$,
with $\afs(t)=\lambda_2$, $\yfs(t)= 0$, $\cfs(t)=0$,
$\wfs(t)=\psi$
for 
$t\in\,\bigl[-\frac{M}{\beta},M\bigr]$,
parametrizes a curve $\D_{\varsigma}$ of degenerate solutions of~{\rm (\ref{a})}
with Morse index equal to one.
There exists a neighborhood ${\cal O}$ of $\D_\varsigma$ 
in $\R\times\h\times\R$ such that the degenerate
solutions of the equation in ${\cal O}$ lie on $\D_\varsigma$.
The degenerate directions are given by $\wfs$.
\end{Lem}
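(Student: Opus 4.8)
The plan is to mirror the proof of Lemma~\ref{curve}, replacing $\phi$, $\lambda_1$ and $\rr$ by $\psi$, $\lambda_2$ and $\rs$. First I would introduce the map $G:\R^2\times\rs\times\R\times\{\zeta\in\h:\int\zeta^2=\int\psi^2\}\to L^p(\Omega)\times L^p(\Omega)$ defined by
$$
G(a,t,y,c,w)=(\Delta\tpsy+a\tpsy-f\tpsy-ch,\ \Delta w+aw-f'\tpsy w).
$$
For $t\in\bigl[-\frac{M}{\beta},M\bigr]$ one has $\max_\Omega(t\psi)\leq M$, so $f(t\psi)=f'(t\psi)=f''(t\psi)=0$; since $\psi$ spans the $\lambda_2$-eigenspace this gives $G(\lambda_2,t,0,0,\psi)=(0,0)$. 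The base solutions $(\lambda_2,t\psi,0)$ are degenerate with degenerate direction $\psi$ and, using $Q_{\lambda_2}(\phi)=(\lambda_1-\lambda_2)\int\phi^2<0$, have Morse index one. I would then fix $t_0\in\bigl[-\frac{M}{\beta},M\bigr]$ and solve $G=0$ near $(\lambda_2,t_0,0,0,\psi)$ for $(a,y,c,w)$ as $C^1$ functions of $t$ by the Implicit Function Theorem.

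The key step is showing $DG_{(a,y,c,w)}$ at $(\lambda_2,t_0,0,0,\psi)$ is an isomorphism. The tangent space to the sphere at $\psi$ is exactly $\rs$, so the derivative acts on $(\alpha,z,\gamma,\omega)\in\R\times\rs\times\R\times\rs$. Because $f''(t_0\psi)=0$, the $z$-term drops out of the second component and one gets
$$
DG_{(a,y,c,w)}(\alpha,z,\gamma,\omega)=(\alpha t_0\psi+\Delta z+\lambda_2 z-\gamma h,\ \alpha\psi+\Delta\omega+\lambda_2\omega).
$$
For injectivity, set this to $(0,0)$. Multiplying the second component by $\psi$ and integrating, the operator $\Delta+\lambda_2$ is formally self-adjoint and annihilates $\psi$, so the $\omega$-contribution vanishes and $\alpha\int\psi^2=0$, giving $\alpha=0$; then $\Delta\omega+\lambda_2\omega=0$ forces $\omega\in\mathrm{span}\,\psi$ by simplicity of $\lambda_2$ (hypothesis $\bm(\bm\alpha\bm)$), and $\int\omega\psi=0$ yields $\omega=0$. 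Repeating on the first component, $\Delta z+\lambda_2 z-\gamma h=0$ paired with $\psi$ gives $-\gamma\int h\psi=0$, so $\gamma=0$ by hypothesis {\bf (c)}, and then $z=0$ as before.

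Surjectivity follows from the Fredholm alternative for $\Delta+\lambda_2$: given a target $(p,q)$, the coefficient $\alpha$ is fixed by requiring $q-\alpha\psi\perp\psi$, after which $\omega\in\rs$ is determined; then $\gamma$ is fixed by requiring $p-\alpha t_0\psi+\gamma h\perp\psi$ (again using $\int h\psi\neq 0$), and $z\in\rs$ is determined. Hence $DG_{(a,y,c,w)}$ is a continuous linear bijection of Banach spaces, so a homeomorphism, and the Implicit Function Theorem applies at every $t_0\in\bigl[-\frac{M}{\beta},M\bigr]$. By local uniqueness the resulting charts agree on overlaps, and compactness of $\bigl[-\frac{M}{\beta},M\bigr]$ lets me glue them and extend a distance $\sigma>0$ past each endpoint, producing the $C^1$ functions $\afs,\yfs,\cfs,\wfs$ on $J=\bigl]-\frac{M}{\beta}-\sigma,M+\sigma\bigr[$. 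These $(\afs(t),\ufs(t),\cfs(t))$ are degenerate by construction, with degenerate direction $\wfs(t)$; the neighborhood ${\cal O}$ and the assertion that degenerate solutions in ${\cal O}$ lie on $\D_\varsigma$ come from the local uniqueness in the Implicit Function Theorem. I do not expect a serious obstacle here: the whole argument is a transcription of Lemma~\ref{curve}, the only genuinely new inputs being that $\bm(\bm\alpha\bm)$ supplies simplicity of the degenerate eigenvalue and {\bf (c)} supplies $\int h\psi\neq 0$. The one point requiring a little care is that the degenerate eigenvalue is now the \emph{second} eigenvalue of the linearized operator rather than the first, so to conclude the Morse index equals one along $\D_\varsigma$ I would shrink $\sigma$ so that, by continuity of the spectrum, the single strictly negative eigenvalue present at the base (the $\phi$-direction) persists and no further eigenvalue crosses zero.
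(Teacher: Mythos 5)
Your proposal is correct and takes exactly the approach the paper intends: the paper's entire proof of Lemma~\ref{curve2} is the remark that it is ``similar to the one of Lemma~\ref{curve}'', and your argument is precisely that transcription, with $\phi$, $\lambda_1$, $\rr$ replaced by $\psi$, $\lambda_2$, $\rs$, and with the two genuinely new inputs correctly identified --- simplicity of $\lambda_2$ (hypothesis {\bf ($\bm\alpha$)}) standing in for the automatic simplicity of $\lambda_1$, and hypothesis {\bf (c)} standing in for $\int h\phi>0$ in the steps forcing $\omega=0$, $z=0$ and $\gamma=0$. Your closing observation, that one shrinks $\sigma$ so the single negative eigenvalue persists and no other eigenvalue crosses zero (hence Morse index one along $\D_\varsigma$), fills in a point the paper leaves implicit, and does so correctly.
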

The proof is similar to the one of Lemma~\ref{curve}.

From Section~\ref{below_l2} we know the solutions of (\ref{a})
for $\lambda_1<a<\lambda_2$ are never degenerate with Morse index equal to one.
From Section~\ref{at_l2} we know the only degenerate solutions with Morse index equal to one of (\ref{a})
for $a=\lambda_2$ are $t\psi$ with $t\in \bigl[-\frac{M}{\beta},M\bigr]$.
It follows that $$\textstyle\afs(t)>\lambda_2\ \ \ {\rm if}\ \ \ t\in\,
\bigl]-\frac{M}{\beta}-\sigma,-\frac{M}{\beta}\bigr[\,\cup\,]M,M+\sigma[.$$

\begin{Lem}[Solutions around {$\bigl\{(\lambda_2,t\psi,0):
t\in\bigl[-\frac{M}{\beta},M\bigr]\bigr\}$}]\label{V}
 There exists a neighborhood $\check{\cal V}$
 of $\bigl\{(\lambda_2,t\psi,0):t\in\bigl[-\frac{M}{\beta},M\bigr]\bigr\}$ in
$\R\times\h\times\R$, $\eps$ and $\tilde{\eps}>0$
such that solutions $(a,u,c)$ of {\rm (\ref{a})} in $\check{\cal V}$
can be parametrized in the chart
$$\cSt:=\,\,]\lambda_2-\eps,\lambda_2+\eps[\,\times
    \,\bigl]-\textstyle\frac{M}{\beta}-\tilde{\eps},M+\tilde{\eps}\bigr[.$$
by
$$(a,t)\mapsto(a,u_\psi(a,t),\cc_\psi(a,t))=(a,t\psi+\y_\psi(a,t),\cc_\psi(a,t))$$
Here $\y_\psi:\cSt\to\rs$ and $\cc_\psi:\cSt\to\R$ are $C^1$ functions.
We have $u_\psi(\lambda_2,t)=t\psi$ and $c_\psi(\lambda_2,t)=0$, for
$t\in\bigl[-\frac{M}{\beta},M\bigr]$.
\end{Lem}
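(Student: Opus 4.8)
The plan is to apply the Implicit Function Theorem in well-chosen coordinates, exactly in the spirit of Lemmas~\ref{curve} and~\ref{zero}. Writing $u=t\psi+y$ with $y\in\rs$, define $\hat{g}:\R\times\R\times\rs\times\R\to L^p(\Omega)$ by
$$
\hat{g}(a,t,y,c)=\Delta\tpsy+a\tpsy-f\tpsy-ch.
$$
Fix $t_0\in\bigl[-\frac{M}{\beta},M\bigr]$. Since $\max_\Omega\psi=1$ and $\min_\Omega\psi=-\beta$, one checks $\max_\Omega(t_0\psi)\leq M$ for every such $t_0$, so $t_0\psi\leq M$ throughout $\Omega$ and hence $f(t_0\psi)=0$. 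Together with $\Delta\psi+\lambda_2\psi=0$ this gives $\hat{g}(\lambda_2,t_0,0,0)=0$. These are the base points from which I would run the theorem, treating $(a,t)$ as parameters and $(y,c)$ as the unknowns.

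The heart of the matter is the invertibility of the partial derivative $D_{(y,c)}\hat{g}$ at $(\lambda_2,t_0,0,0)$. Because $f'$ vanishes on $(-\infty,M]$ we have $f'(t_0\psi)=0$, so this derivative sends $(z,\gamma)\in\rs\times\R$ to $\Delta z+\lambda_2 z-\gamma h\in L^p(\Omega)$. For injectivity, suppose $\Delta z+\lambda_2 z-\gamma h=0$; multiplying by $\psi$, integrating by parts, and using $\Delta\psi+\lambda_2\psi=0$, the $z$-terms cancel and we are left with $\gamma\int h\psi=0$, whence $\gamma=0$ by hypothesis {\bf (c)}. Then $\Delta z+\lambda_2 z=0$ forces $z\in\mathrm{span}\{\psi\}$ by the simplicity assumption {\bf ($\bm\alpha$)}, and $\int z\psi=0$ gives $z=0$. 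For surjectivity, given $r\in L^p(\Omega)$ the Fredholm alternative requires $r+\gamma h\perp\psi$ in $L^2(\Omega)$; since $\int h\psi\neq 0$ one solves $\gamma=-\int r\psi/\int h\psi$, and then $\Delta z+\lambda_2 z=r+\gamma h$ has a solution in $\h$ which can be normalized to lie in $\rs$. Thus $D_{(y,c)}\hat{g}$ is an isomorphism from $\rs\times\R$ onto $L^p(\Omega)$. I expect this isomorphism step, resting squarely on {\bf ($\bm\alpha$)} and {\bf (c)}, to be the main point; everything else is routine.

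With the isomorphism in hand, the Implicit Function Theorem produces, near each $(\lambda_2,t_0)$, unique $C^1$ functions $(a,t)\mapsto(\y_\psi(a,t),\cc_\psi(a,t))$ solving $\hat{g}=0$, with $u_\psi(a,t)=t\psi+\y_\psi(a,t)$. Since $\bigl[-\frac{M}{\beta},M\bigr]$ is compact, I would cover it by finitely many such neighborhoods; local uniqueness forces the resulting parametrizations to agree on overlaps, so they patch into $C^1$ functions $\y_\psi,\cc_\psi$ defined on a single chart $\cSt=\,]\lambda_2-\eps,\lambda_2+\eps[\,\times\,\bigl]-\frac{M}{\beta}-\tilde{\eps},M+\tilde{\eps}\bigr[$ for suitable $\eps,\tilde{\eps}>0$, and shrinking the tubular neighborhood $\check{\cal V}$ if necessary guarantees that every solution of~{\rm (\ref{a})} in $\check{\cal V}$ is captured by this chart. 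Finally, $\hat{g}(\lambda_2,t,0,0)=0$ together with the local uniqueness in the Implicit Function Theorem yields $\y_\psi(\lambda_2,t)=0$ and $\cc_\psi(\lambda_2,t)=0$, i.e.\ $u_\psi(\lambda_2,t)=t\psi$ and $c_\psi(\lambda_2,t)=0$ for $t\in\bigl[-\frac{M}{\beta},M\bigr]$. The only delicate bookkeeping is arranging the uniform $\eps,\tilde{\eps}$ and the neighborhood $\check{\cal V}$ so that the patched chart captures exactly the solutions near the whole segment.
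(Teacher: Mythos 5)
Your proposal is correct and is essentially the paper's own argument: the paper proves Lemma~\ref{V} by repeating the first paragraph of the proof of Theorem~\ref{global} with $\phi,\lambda_1,\rr$ replaced by $\psi,\lambda_2,\rs$, i.e.\ the same Implicit Function Theorem applied to $\hat{g}(a,t,y,c)=\Delta\tpsy+a\tpsy-f\tpsy-ch$ at the points $(\lambda_2,t_0,0,0)$, with injectivity of $(z,\gamma)\mapsto\Delta z+\lambda_2 z-\gamma h$ obtained exactly as you do from {\bf (c)} and {\bf ($\bm\alpha$)}, followed by the same compactness patching along the segment. Nothing is missing.
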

The proof is similar to the argument
in the first paragraph of the proof of Theorem~\ref{global}.

We choose the value $\tilde{\eps}$ such that $\afs(t)<\lambda_2+\eps$ for 
$t\in\,\bigl]-\frac{M}{\beta}-\tilde{\eps},M+\tilde{\eps}\bigr[$.
We may assume $\check{\cal V}\subset{\cal O}$ and $\tilde{\eps}$ is smaller than $\sigma$.
Here ${\cal O}$  and $\sigma$ are as in Lemma~\ref{curve2}.
We also assume
$\eps$ and $\tilde{\eps}$ are small enough so that 
the solutions of (\ref{a}) in the closure of $\check{\cal V}$ have Morse index at least equal to one.

\begin{Lem}[No degenerate solutions with Morse index equal to one outside $\check{\cal V}$ for $c$ bounded below]\label{L}
 Let $L>0$. There exists $\hat{\delta}_L>0$ such that degenerate solutions $(a,u,c)$ of {\rm (\ref{a})}, 
with Morse index equal to one, $a<\lambda_2+\hat{\delta}_L$ and $c\geq -L$, lie in the set $\check{\cal V}$
of {\rm Lemma~\ref{V}}.
\end{Lem}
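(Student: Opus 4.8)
The plan is to argue by contradiction, extracting a convergent subsequence of hypothetical ``escaping'' degenerate solutions and identifying its limit as one of the points $(\lambda_2,t\psi,0)$, which already lie in $\check{\cal V}$. Suppose the statement fails for some $L>0$. Then there is a sequence $(a_n,u_n,c_n)$ of degenerate solutions of (\ref{a}) with Morse index equal to one, $a_n<\lambda_2+\frac1n$, $c_n\geq -L$, and $(a_n,u_n,c_n)\notin\check{\cal V}$. By the analysis of Sections~\ref{below_l2} and~\ref{at_l2}, a degenerate solution with Morse index one and $a\leq\lambda_2$ can only be one of the points $(\lambda_2,t\psi,0)$ with $t\in\bigl[-\frac{M}{\beta},M\bigr]$, and these lie in $\check{\cal V}$; hence we may assume $a_n>\lambda_2$, so $a_n\searrow\lambda_2$. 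Let $w_n$ be the corresponding degenerate direction, normalized by $\int w_n^2=\int\psi^2$.

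First I would establish uniform a priori bounds. Using {\bf (iv)} and the convexity {\bf (iii)}, choose $K>0$, depending only on $L$, $\|h\|_\infty$ and an upper bound for $a_n$, so that $u\mapsto a_nu-f(u)$ is decreasing on $[K,+\infty[$ and $a_nK-f(K)+L\|h\|_\infty\leq 0$. Since $c_n\geq -L$ and $h\geq 0$, the constant $K$ is a supersolution of (\ref{a}), so the maximum principle gives $u_n\leq K$ uniformly in $n$. Testing (\ref{a}) against $\phi$ yields $c_n\int h\phi=(a_n-\lambda_1)\int u_n\phi-\int f(u_n)\phi$; as $\int h\phi>0$, $\int f(u_n)\phi\geq 0$ and $\int u_n\phi\leq\int u_n^+\phi$ is bounded, $c_n$ is bounded above, and with $c_n\geq -L$ the sequence $(c_n)$ is bounded.

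The crux, and the step I expect to be the main obstacle, is to rule out $\|u_n\|_{L^2(\Omega)}\to+\infty$; this is precisely the statement that there is no degenerate Morse index one solution at infinity, and it is delicate because $f\equiv 0$ below $M$ removes the superlinear damping on the negative part. Suppose $\|u_n\|_{L^2}\to+\infty$ and set $v_n=u_n/\|u_n\|_{L^2}$. Since $0\leq f(u_n)\leq f(K)$ is uniformly bounded and $(c_n)$ is bounded, dividing (\ref{a}) by $\|u_n\|_{L^2}$ and testing with $v_n$ shows $(v_n)$ is bounded in $H^1_0(\Omega)$; passing to a subsequence, $v_n\weak v$ in $H^1_0(\Omega)$ and $v_n\to v$ in $L^2(\Omega)$, so $\|v\|_{L^2}=1$ and, in the limit, $-\Delta v=\lambda_2v$. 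Because $u_n\leq K$ we have $\|v_n^+\|_{L^2}\to 0$, whence $v\leq 0$. But any nonzero eigenfunction for $\lambda_2$ is a multiple of $\psi$ (by {\rm $\bm(\bm\alpha\bm)$}) and therefore changes sign, a contradiction. Hence $\|u_n\|_{L^2}$ is bounded.

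Finally I would pass to the limit. With $u_n\leq K$ and $\|u_n\|_{L^2}$ bounded, \cite[Lemma~9.17]{GT} gives a uniform bound on $\|u_n\|_\h$, so along a subsequence $u_n\to u_*$ in $\h$ and $c_n\to c_*$, with $(\lambda_2,u_*,c_*)$ a solution of (\ref{a}). Applying the same compactness to $w_n$ (bounded in $H^1_0(\Omega)$ because $f'\geq 0$), we get $w_n\weak w_*$ with $\int w_*^2=\int\psi^2$, so $w_*\neq 0$ and $w_*$ solves the linearized equation at $u_*$ with zero eigenvalue; thus $(\lambda_2,u_*,c_*)$ is degenerate. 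Its lowest eigenvalue is the limit of the negative lowest eigenvalues of the $u_n$, hence $\leq 0$; were it $0$, the corresponding ground state would be positive, contradicting that the zero-eigenfunction $w_*$ is orthogonal to the positive limiting ground state and so changes sign. Therefore the lowest eigenvalue is strictly negative and, by Lemma~\ref{oned}, the Morse index of $(\lambda_2,u_*,c_*)$ equals one, forcing $(\lambda_2,u_*,c_*)=(\lambda_2,t_*\psi,0)$ for some $t_*\in\bigl[-\frac{M}{\beta},M\bigr]$. But then, for $n$ large, $(a_n,u_n,c_n)$ lies in the neighborhood $\check{\cal V}$ of $\bigl\{(\lambda_2,t\psi,0):t\in\bigl[-\frac{M}{\beta},M\bigr]\bigr\}$ supplied by Lemma~\ref{V}, contradicting the choice of the sequence. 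This proves the lemma.
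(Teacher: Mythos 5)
Your proof is correct and follows essentially the same route as the paper's: argue by contradiction, obtain uniform bounds, extract a subsequence converging strongly in $\h$, and identify the limit as a degenerate solution with Morse index equal to one at $a=\lambda_2$, which by Lemma~\ref{oned} must be one of the points $(\lambda_2,t\psi,0)\subset\check{\cal V}$, contradicting that the sequence lies outside $\check{\cal V}$. The only differences are matters of exposition: you inline the a priori bound and the normalization/blow-up argument that the paper delegates to Remark~\ref{unico} (getting the contradiction from the sign-changing of $\lambda_2$-eigenfunctions rather than from $L=\lambda_1$), and you spell out, via simplicity of the first eigenvalue, why the Morse index cannot drop to zero in the limit --- a point the paper covers with ``one easily sees''.
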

In the next section we will prove that actually one can choose $\hat{\delta}$ independently of $L$.
\begin{proof}[Proof of {\rm Lemma~\ref{L}}]
 We argue by contradiction. Suppose $(a_n,u_n,c_n)$ is a sequence of degenerate solutions of
{\rm (\ref{a})}, 
with Morse index equal to one, $a_n\leq\lambda_2+o(1)$ and $c_n\geq -L$
lying outside $\check{\cal V}$. Of course $(c_n)$
is bounded above (recall (\ref{upc})
and the solutions are bounded above by $K$ in (\ref{kkk})).  
We may assume $c_n\to c$.
From Sections~\ref{below_l2} and \ref{at_l2},
we may also assume $a_n\searrow\lambda_2$. 
From Remark~\ref{unico}, $(u_n)$ is uniformly bounded.
By \cite[Lemma~9.17]{GT} the norms
$\|u_n\|_{\h}$ are uniformly bounded. 
Thus $(u_n)$ has a strongly convergent subsequence in $L^p(\Omega)$.
Subtracting equations (\ref{a}) for $u_n$ and $u_m$ and again using \cite[Lemma~9.17]{GT},
$(u_n)$ has a subsequence which is strongly convergent, say to $u$, in $\h$.
One easily sees that $(\lambda_2,u,c)$ is a degenerate solution of (\ref{a}) with
Morse index equal to one. But $(\lambda_2,u,c)$ belongs to the closure of the complement
of $\check{\cal V}$. This contradicts Lemma~\ref{oned} and finishes the proof.
\end{proof}
We define 
\begin{equation}\label{dl}
\textstyle\delta_L=\min\bigl\{\hat{\delta}_L,\afs(M+\tilde{\eps})-\lambda_2,
\afs\bigl(-\frac{M}{\beta}-\tilde{\eps}\bigr)-\lambda_2\bigr\}
\end{equation}
(see Figure~\ref{fig7}).
Here $\afs$ is as in Lemma~\ref{curve2} and $\tilde{\eps}$ is as in Lemma~\ref{V}.
\begin{figure}
\centering
\begin{psfrags}
\psfrag{s}{{\tiny $\lambda_2-\eps$}}
\psfrag{z}{{\tiny $\lambda_2+\eps$}}
\psfrag{a}{{\tiny $a$}}
\psfrag{p}{{\tiny $\!\!\!\!\!\!\!\!(a_{\varsigma}(t),t)$}}
\psfrag{t}{{\tiny $t$}}
\psfrag{k}{{\tiny $\lambda_1$}}
\psfrag{l}{{\tiny $\lambda_2$}}
\psfrag{n}{{\tiny $M+\tilde{\eps}$}}
\psfrag{m}{{\tiny $-\frac{M}{\beta}-\tilde{\eps}$}}
\psfrag{u}{{\tiny $\lambda_2+\delta_L$}}
\includegraphics[scale=1.0]{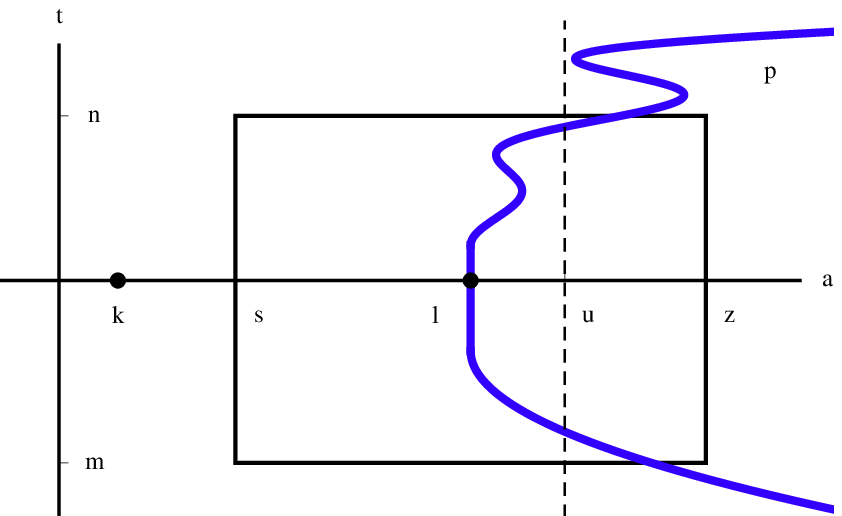}
\end{psfrags}
\caption{The chart of Lemma~\ref{V}
and the choice of the value of $\delta_L$.}\label{fig7}
\end{figure}

We will prove Theorem~\ref{thm} in the next section. For now we prove the following weaker statement. 

\begin{Prop}[Bifurcation in a right neighborhood of $\lambda_2$ for $c$ bounded below]\label{prop}
Suppose $f$ satisfies {\rm\bf (i)}-{\rm\bf (iv)}, {\rm $\bm(\bm\alpha\bm)$} holds
and $h$ satisfies {\rm\bf (a)}-{\rm\bf (c)}. Without loss of generality, suppose {\rm (\ref{hpsi})} is true.
Let $L>0$ be large and $\delta_L$ be as in {\rm (\ref{dl})}. Fix $\lambda_2<a<\lambda_2+\delta_L$.
The set of solutions of {\rm (\ref{a})} with $c>-L$ is a manifold as described in {\rm Theorem~\ref{thm}},
with $u^*$ defined in $]-L,c_*[$ and $u^\flat$ defined in $]-L,c_\flat[$.
\end{Prop}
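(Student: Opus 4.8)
The plan is to thread the local descriptions already obtained into a single curve, working from the right end of the diagram (large $c$) toward the left, and to lean on Lemma~\ref{L} as the decisive global input: once $c\geq -L$ and $a<\lambda_2+\delta_L$, no degenerate index-one solution can escape the chart neighborhood $\check{\cal V}$ of Lemma~\ref{V}. Throughout I use that, as recorded at the head of this section, every solution with $\lambda_2<a<\lambda_2+\delta_L$ has Morse index one or two, and that the index-two ones are nondegenerate.

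First I would fix the right end. By Theorem~\ref{big} the stable solutions form the graph ${\cal M}^*=\{(c,u^*(c))\}$, which I restrict to $]-L,c_*[$; its limit ${\bm p}_*=(c_*,u_*)$ is the degenerate index-zero solution on $\D_*$. Lemma~\ref{turn}, applied at ${\bm p}_*$, produces the index-one branch ${\bm m}^\sharp$ issuing from it for $c$ just below $c_*$. I then continue this branch to the left by the Implicit Function Theorem, which applies while the solution stays nondegenerate; an index-one solution is degenerate only on $\D_\varsigma$ (Lemma~\ref{curve2}), and by Lemma~\ref{L} every such solution with $c\geq -L$ lies inside $\check{\cal V}$. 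Hence ${\cal M}^\sharp=\{(c,u^\sharp(c))\}$ extends with no degeneracy, $c$ being bounded above by~(\ref{upc}) and the solution bounded by Remark~\ref{unico}, until it first meets $\D_\varsigma$ on entering $\check{\cal V}$; I call this degenerate index-one solution ${\bm p}_\sharp=(c_\sharp,u_\sharp)$.

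Next I would analyze the chart. In $\check{\cal V}$ Lemma~\ref{V} parametrizes every solution by $(a,t)\mapsto(a,u_\psi(a,t),\cc_\psi(a,t))$ with $u_\psi=t\psi+\y_\psi(a,t)$, so the $t$-slice at our fixed $a$ is a curve whose degenerate points are exactly its intersections with $\D_\varsigma$, i.e.\ the roots $s$ of $\afs(s)=a$ on the curve of Lemma~\ref{curve2}. Differentiating~(\ref{a}) in $t$ and pairing with the second eigenfunction $w$ of the linearization yields an identity of the form~(\ref{mut}), namely $-\mu_2\int\tfrac{\partial u_\psi}{\partial t}\,w=\tfrac{\partial\cc_\psi}{\partial t}\int hw$ with $\mu_2$ the second eigenvalue; since $\tfrac{\partial u_\psi}{\partial t}$ and $w$ are close to $\psi$ near the segment while $\int h\psi<0$ by~(\ref{hpsi}), the sign dictionary of Remark~\ref{rmk2} holds, and the computation of~(\ref{c}) gives, at the trivial index-two solution $u^\natural(0)=0$, the value $\tfrac{\partial\cc_\psi}{\partial t}(a,0)=\tfrac{\int\psi^2}{\int h\psi}(a-\lambda_2)<0$. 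The calibration~(\ref{dl}) of $\delta_L$ against $\afs(M+\tilde{\eps})$ and $\afs\bigl(-\tfrac{M}{\beta}-\tilde{\eps}\bigr)$ guarantees that $\afs(s)=a$ has exactly two roots inside the chart; these split the slice into an index-two middle interval $J$, on which $\tfrac{\partial\cc_\psi}{\partial t}<0$ and which is ${\cal M}^\natural$, bounded by the two degenerate index-one solutions ${\bm p}_\flat$ and ${\bm p}_\sharp$, together with two index-one tails on which $\tfrac{\partial\cc_\psi}{\partial t}>0$. The tail beyond ${\bm p}_\sharp$ coincides with the branch ${\cal M}^\sharp$ constructed above, which glues the chart to it.

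Finally I would close the diagram on the left. The other tail, beyond ${\bm p}_\flat$, is the germ of ${\cal M}^\flat=\{(c,u^\flat(c))\}$; I continue it by the Implicit Function Theorem for all $c\in\,]-L,c_\flat[$, again using Remark~\ref{unico} against blow-up and Lemma~\ref{L} against any further degeneracy. That these pieces exhaust all solutions with $c>-L$ follows by the usual sweep: a solution has Morse index at most two, can be followed in $c$ while nondegenerate, is bounded above in $c$ by~(\ref{upc}), and must therefore terminate at a degenerate solution on $\D_*$ or (by Lemma~\ref{L}) inside $\check{\cal V}$ on $\D_\varsigma$, whence Lemma~\ref{turn} and the chart force it onto the manifold already built; the $C^1$ gluing at the degenerate solutions then makes ${\cal M}$ a single connected $C^1$ manifold. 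The main obstacle is the chart step of the third paragraph: extracting the sign of $\tfrac{\partial\cc_\psi}{\partial t}(a,0)$ and, above all, showing that the slice degenerates at precisely two interior points and nowhere else — this is exactly what the definition~(\ref{dl}) of $\delta_L$ secures, and it is this fact that welds the otherwise independent local pieces into the stated manifold.
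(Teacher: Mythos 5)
Your overall architecture (the chart of Lemma~\ref{V} plus Lemma~\ref{L} as the global input, Lemma~\ref{turn} at ${\bm p}_*$, and a final exhaustiveness sweep) is the same as the paper's, but your third paragraph contains a step that fails. You assert that the calibration (\ref{dl}) "guarantees that $a_{\varsigma}(s)=a$ has exactly two roots inside the chart," and on that basis you describe ${\cal M}^\natural$ as a purely index-two interval on which $\frac{\partial\cc_\psi}{\partial t}<0$. The definition (\ref{dl}) of $\delta_L$ controls only the \emph{endpoint} values $a_{\varsigma}(M+\tilde{\eps})$ and $a_{\varsigma}\bigl(-\frac{M}{\beta}-\tilde{\eps}\bigr)$; it ensures that the two chart endpoints lie in ${\cal O}_1=\{(a,t):a<a_{\varsigma}(t)\}$ and nothing more. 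Since $a_{\varsigma}$ is merely $C^1$, equal to $\lambda_2$ on $\bigl[-\frac{M}{\beta},M\bigr]$ and greater than $\lambda_2$ off that segment, the level set $\{t:a_{\varsigma}(t)=a\}$ may meet $]M,M+\tilde{\eps}[$ (and the symmetric interval on the left) in arbitrarily many points: the Intermediate Value Theorem gives at least one crossing on each side, but nothing prevents $a_{\varsigma}$ from oscillating across the level $a$. Hence the portion of the slice between your two outermost roots need not lie in ${\cal O}_2$: it can contain sub-arcs of Morse index one and further degenerate index-one points. This is not a cosmetic issue, because your description of ${\cal M}^\natural$ contradicts the very statement being proved: Theorem~\ref{thm} says ${\cal M}^\natural$ carries solutions of Morse index one \emph{or} two, and Remark~\ref{rmk2} explicitly allows $(\cc^\natural)'(t)=0$ at interior points (Figure~\ref{fig3} depicts exactly this). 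The paper sidesteps all of this by taking $\eps_\sharp$ (resp.\ $\eps_\flat$) to be the \emph{largest} $t\in\,]0,\tilde{\eps}[$ with $a=a_{\varsigma}(M+t)$ (resp.\ $a=a_{\varsigma}\bigl(-\frac{M}{\beta}-t\bigr)$), so that only the two outer tails — which is all the gluing needs — are known to be degeneracy-free of index one with $\frac{\partial\cc_\psi}{\partial t}>0$, while no claim is made about the interior structure of ${\cal M}^\natural$.

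There is a second, related gap in your first paragraph: you continue ${\cal M}^\sharp$ leftward from ${\bm p}_*$ "until it first meets $\D_\varsigma$ on entering $\check{\cal V}$," but nothing you have said forces that branch ever to become degenerate or to enter $\check{\cal V}$ at all; a priori it could remain nondegenerate of index one for every $c>-L$, exactly as ${\cal M}^\flat$ does. The paper resolves this by orienting the sweep the other way: it starts \emph{inside} the chart at $(a,0)$, determines the sign of $\frac{\partial\cc_\psi}{\partial t}$ on the components ${\cal O}_1$, ${\cal O}_2$, exits at $(a,M+\tilde{\eps})$ with $c$ increasing, and then uses (\ref{upc}) and Remark~\ref{unico} to force the outer arc to terminate at a degenerate solution, which by Lemma~\ref{L} must have Morse index zero, hence is ${\bm p}_*$; Lemma~\ref{turn} then identifies that arc with ${\cal M}^\sharp$. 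Your gluing sentence ("the tail beyond ${\bm p}_\sharp$ coincides with the branch ${\cal M}^\sharp$ constructed above") needs precisely this argument, and once it is supplied your outside-in first paragraph becomes redundant.
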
 
\begin{proof}
 We start at $(a,u,c)=(a,0,0)$. In the $(a,t)$ coordinates of Lemma~\ref{V} this solution may also be written
as $(a,0)$ as $(a,0\psi+y_\psi(a,0),c_\psi(a,0))=(a,0,0)$. Analogously to (\ref{c}),
$$ 
\frac{\partial\cc_\psi}{\partial t}(a,0)=\frac{{\textstyle\int \psi^2}}{{\textstyle\int h\psi}}\,\,(a-\lambda_2).
$$ 
This shows 
\begin{equation}\label{and}
\frac{\partial\cc_\psi}{\partial t}(a,0)>0\ {\rm for}\  a<\lambda_2
\qquad {\rm and}\qquad 
\frac{\partial\cc_\psi}{\partial t}(a,0)<0\ {\rm for}\ a>\lambda_2.
\end{equation}
Let $(a,u,c)$ be a solution with coordinates $(a,t)$.
We claim that 
if $a<\afs(t)$ then $\frac{\partial\cc_\psi}{\partial t}(a,t)>0$,
and
if $a>\afs(t)$ then $\frac{\partial\cc_\psi}{\partial t}(a,t)<0$.
Indeed, first we observe that if $a\neq\afs(t)$ then $\frac{\partial\cc_\psi}{\partial t}(a,t)\neq 0$,
because if $\frac{\partial\cc_\psi}{\partial t}(a,t)=0$, then, differentiating~(\ref{a}) with respect to $t$,
we conclude that the solution with coordinates $(a,t)$ is degenerate. But we know from Lemma~\ref{curve2} 
the only degenerate solutions of~(\ref{a}) 
in $\check{\cal V}$ lie on $\D_{\varsigma}$.
The curve parametrized by $t\mapsto(\afs(t),t)$ divides the rectangle $\cSt$
in two components, ${\cal O}_1:=\{(a,t)\in\cSt:a<\afs(t)\}$ and 
${\cal O}_2:=\{(a,t)\in\cSt:a>\afs(t)\}$. The segment $(a,0)$ with $a<\lambda_2$
is contained in the first component ${\cal O}_1$, and the segment $(a,0)$ with $a>\lambda_2$
is contained in the second component ${\cal O}_2$. Using inequalities (\ref{and}) and the continuity of $\frac{\partial\cc_\psi}{\partial t}$,
$\frac{\partial\cc_\psi}{\partial t}(a,t)>0$ in ${\cal O}_1$ and
$\frac{\partial\cc_\psi}{\partial t}(a,t)<0$ in ${\cal O}_2$. 

Since $a$ is smaller than $\lambda_2+\delta$, and $\delta$ is smaller than 
$\afs(M+\tilde{\eps})-\lambda_2$, we have $a<\afs(M+\tilde{\eps})$,
which means $(a,M+\tilde{\eps})\in{\cal O}_1$. Similarly, $a<
\afs\bigl(-\frac{M}{\beta}-\tilde{\eps}\bigr)$ so that also 
$\bigl(a,-\frac{M}{\beta}-\tilde{\eps}\bigr)\in{\cal O}_1$. Thus
\begin{eqnarray}
\frac{\partial\cc_\psi}{\partial t}(a,M+\tilde{\eps})&>&0,\label{one}\\
\frac{\partial\cc_\psi}{\partial t}\bigl(a,-\textstyle\frac{M}{\beta}-\tilde{\eps}\bigr)&>&0.\label{two}
\end{eqnarray}
As mentioned above, the parameter $a$ is fixed. We vary $t$ in $\bigl[-\frac{M}{\beta}-\tilde{\eps},M+\tilde{\eps}\bigr]$
to follow the solutions of (\ref{a}) in $\check{\cal V}$. When we arrive at the solution with coordinates
$(a,M+\tilde{\eps})$, (\ref{one}) shows we may follow the solutions out of $\check{\cal V}$ by changing coordinates,
using $c$ as a parameter and increasing it. On the opposite side, when we arrive at the solution with coordinates
$\bigl(a,-\frac{M}{\beta}-\tilde{\eps}\bigr)$, (\ref{two}) shows we may follow the solutions out of $\check{\cal V}$ by changing coordinates,
using $c$ as a parameter and decreasing it. Indeed, $c$ is increasing as one enters $\check{\cal V}$ using $t$ as an increasing parameter.

We observe that again by continuity and because the only degenerate solutions of~(\ref{a}) 
in $\check{\cal V}$ lie on $\D_{\varsigma}$, the Morse index of solutions in ${\cal O}_1$ is
equal to one and the Morse index of solutions in ${\cal O}_2$ is
equal to two. So the solutions with coordinates $(a,M+\tilde{\eps})$ and 
$\bigl(a,-\frac{M}{\beta}-\tilde{\eps}\bigr)$ are nondegenerate and have Morse index equal to one.

When we arrive at the solution with coordinates
$(a,M+\tilde{\eps})$, and follow the solutions out of $\check{\cal V}$ by 
increasing $c$,
Lemma~\ref{L} says if we find a degenerate solution it will have to have Morse index equal to zero.
Similarly, 
when we arrive at the solution with coordinates
$\bigl(a,-\frac{M}{\beta}-\tilde{\eps}\bigr)$,
and follow the solutions out of $\check{\cal V}$ by 
decreasing $c$,
Lemma~\ref{L} says if we find a degenerate solution it will have to have Morse index equal to zero.
But we know these lie on $\D_*$. So one can finish by arguing 
as in the proof of Theorem~\ref{l1l2}.

The value $\eps_\sharp$ in the statement of Theorem~\ref{thm} is the maximum value of $t\in\,]0,\tilde{\eps}[$ such that
$a=a_\varsigma(M+t)$. The value $\eps_\flat$ is the maximum value of $t\in\,]0,\tilde{\eps}[$ such that
$a=a_\varsigma\bigl(-\frac{M}{\beta}-t\bigr)$.
Also, $(c_\flat,u_\flat)=\bigl(c_\psi\bigl(a,-\frac{M}{\beta}-\eps_\flat\bigr),
u_\psi\bigl(a,-\frac{M}{\beta}-\eps_\flat\bigr)\bigr)$ and
$(c_\sharp,u_\sharp)=(c_\psi(a,M+\eps_\sharp),
u_\psi(a,M+\eps_\sharp))$.

It is clear that if $|c|$ is sufficiently small,
then {\rm (\ref{a})} has at least four solutions.
\end{proof}
\begin{proof}[Proof of {\rm Remark~\ref{rmk2}}]
This is the assertion that if $\frac{\partial\cc_\psi}{\partial t}(a,t)>0$, then $(a,t)\in{\cal O}_1$ and
so the solution with coordinates $(a,t)$ is nondegenerate with Morse index equal to one; if
$\frac{\partial\cc_\psi}{\partial t}(a,t)<0$, then $(a,t)\in{\cal O}_2$ and
so the solution with coordinates $(a,t)$ is nondegenerate with Morse index equal to two;
if
$\frac{\partial\cc_\psi}{\partial t}(a,t)=0$, then $a=a_\varsigma(t)$ and
so the solution with coordinates $(a,t)$ is degenerate with Morse index equal to one.

In alternative, at the expense of maybe another reduction in ${\cal I}_\psi$,
we could argue using the analogue of (\ref{mut}), namely
$$ 
-\mu_\psi\int {\textstyle\frac{\partial u_\psi}{\partial t}}\zeta_\psi={\textstyle\frac{\partial c_\psi}{\partial t}}\int h\zeta_\psi.
$$ 
Here $(\mu_\psi,\zeta_\psi)$ is the second eigenpair associated with linearized problem for the solution $(a,u_\psi,c_\psi)$,
obtained from the Implicit Function Theorem and with $(\mu_\psi(\lambda_2,t),\zeta_\psi(\lambda_2,t))=(0,\psi)$
for $t\in\bigl[-\frac{M}{\beta},M\bigr]$.

\end{proof}

\section{Global bifurcation in a right neighborhood of $\lambda_2$}\label{last}

In this section we prove Theorem~\ref{thm}. We assume $f$ satisfies {\rm\bf (i)}-{\rm\bf (iv)}, {\rm $\bm(\bm\alpha\bm)$} holds
and $h$ satisfies {\rm\bf (a)}, {\rm\bf (b)$'$}, {\rm\bf (c)}. The argument rests on the results above and on 
\begin{Lem}[No degenerate solutions with Morse index equal to one at infinity for $a<\lambda_2+\delta$]\label{ultimo}
There exists $\delta_0>0$ and $c_0<0$ such that degenerate solutions $(a,u,c)$ of {\rm (\ref{a})} with Morse
index equal to one and $a<\lambda_2+\delta_0$ satisfy $c\geq c_0$.
\end{Lem}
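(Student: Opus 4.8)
The plan is to argue by contradiction through a blow-up analysis of the linearization as $c\to-\infty$, the key point being that the superlinear term $f$ turns into a stiff spectral barrier on the nodal set of $\psi$. Suppose the lemma fails. Then for each $n$ we may pick a degenerate solution $(a_n,u_n,c_n)$ of~(\ref{a}) with Morse index one, $a_n<\lambda_2+\frac1n$ and $c_n<-n$. By Section~\ref{below_l2} there are no degenerate solutions with Morse index one for $\lambda_1<a<\lambda_2$, and by Lemma~\ref{oned} the only ones at $a=\lambda_2$ are $t\psi$, which have $c=0$; hence $a_n>\lambda_2$, so $a_n\searrow\lambda_2$ while $c_n\to-\infty$. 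Let $\varphi_n>0$ be the first eigenfunction of the linearized operator (negative eigenvalue $\mu^{(n)}_1<0$) and $w_n$ the degenerate direction, i.e.\ the second eigenfunction, with $\mu^{(n)}_2=0$, normalized by $\int w_n^2=\int\psi^2$. Projecting~(\ref{a}) onto $\phi$ and using $\int h\phi>0$ shows $\|u_n\|$ cannot stay bounded, so $\|u_n\|\to\infty$. I will also use identity~(\ref{signc}), $\int(f'(u_n)u_n-f(u_n))w_n=c_n\int hw_n$, together with the convexity bound $f'(u)u-f(u)\ge0$.

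First I would pin down the blow-up profile. Projecting~(\ref{a}) onto $\psi$ gives
$$(a_n-\lambda_2)\int u_n\psi=\int f(u_n)\psi+c_n\int h\psi.$$
Since $\int h\psi<0$ by~(\ref{hpsi}), $a_n-\lambda_2\to0$, $f$ vanishes below $M$ and is superlinear at $+\infty$ by {\rm\bf (iv)}, this forces the blow-up to take place in the $-\psi$ direction: $u_n\to+\infty$ on $\{\psi<0\}$ and $u_n\to-\infty$ (so $u_n\le M$ and $f'(u_n)=0$) on $\{\psi>0\}$. The decisive consequence is that $f'(u_n)\to+\infty$ on $\{\psi<0\}$, so $f'(u_n)$ acts as a stiff barrier there, while on $D:=\{\psi>0\}$ the linearized operator reduces to $-\Delta-a_n$.

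Then I would read off a spectral contradiction. From $\mu^{(n)}_2=0$ and $\mu^{(n)}_1<0$ one gets that $\int f'(u_n)w_n^2$ and $\int f'(u_n)\varphi_n^2$ are bounded; as $f'(u_n)\to\infty$ on $\{\psi<0\}$, both $w_n$ and $\varphi_n$ lose all their $L^2$ mass there. Passing to weak $H^1_0$ and strong $L^2$ limits, $\varphi_n\to\varphi$ and $w_n\to w$, both supported in $\overline D$ and solving a Dirichlet problem on $D$; explicitly $-\Delta\varphi=(\lambda_2+\mu^\infty_1)\varphi$ with $\varphi>0$, and $-\Delta w=\lambda_2 w$ with $w\not\equiv0$ (its mass has concentrated on $D$). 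Because $D=\{\psi>0\}$ is a nodal domain of $\psi$, its principal Dirichlet eigenvalue is $\lambda_1(D)=\lambda_2$, so $\mu^\infty_1=0$ and $\varphi$ is the principal eigenfunction of $D$. Writing $\lambda_2(D)$ for the second Dirichlet eigenvalue of $D$, choose $\delta_0<\lambda_2(D)-\lambda_2$, a fixed positive number by strict domain monotonicity ($\lambda_2(D)>\lambda_2(\Omega)=\lambda_2$). Then the eigenvalue $\lambda_2$ of $w$ on $D$ lies strictly below $\lambda_2(D)$, so $w$ must itself be a principal eigenfunction of $D$, hence one-signed; but $w$ inherits $\int\varphi_n w_n=0$, i.e.\ $w\perp\varphi$, which is impossible for two principal eigenfunctions. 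This contradiction yields $\delta_0$ and some $c_0$ as in the statement, with $\delta_0$ governed by the gap $\lambda_2(\{\psi>0\})-\lambda_2$ and in particular independent of $c_0$ (hence of $L$ in Lemma~\ref{L}).

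I expect the genuine difficulty to lie in the second step: rigorously establishing the blow-up profile — that $u_n\to+\infty$ on a subset of $\{\psi<0\}$ of measure bounded below while $u_n\le M$ on $\{\psi>0\}$ — so that the localization domain $D$ is a proper nodal subdomain with a quantified spectral gap. This is exactly the ``critical point at infinity'': one must control the corrections to the leading $t_n\psi$ behaviour and make the stiff-barrier convergence $\mu_k\bigl(-\Delta-a_n+f'(u_n)\bigr)\to\lambda_k(D)-a_n$ precise. The normalized degenerate direction $w_n$, identity~(\ref{signc}), and the convexity of $f$ are the tools I would rely on to fix the signs and identify the limit.
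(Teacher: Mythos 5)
Your overall frame (argue by contradiction along a sequence $a_n\searrow\lambda_2$, $c_n\to-\infty$, and exploit the first eigenfunction together with the degenerate direction) matches the paper's, but the engine of your argument is precisely the step you leave unproven: the pointwise blow-up profile, i.e.\ that $u_n\to+\infty$ a.e.\ on $\{\psi<0\}$ while $u_n\le M$ on all of $\{\psi>0\}$. The one-line justification by projecting (\ref{a}) onto $\psi$ cannot deliver it. In the identity $(a_n-\lambda_2)\int u_n\psi=\int f(u_n)\psi+c_n\int h\psi$ the left-hand term is not negligible: you only know $a_n-\lambda_2<\frac1n$, while $\|u_n\|_{L^2(\Omega)}$ may grow arbitrarily fast relative to both $n$ and $|c_n|$, so the balance you want ($\int f(u_n)\psi\approx -c_n\int h\psi$, forcing $f(u_n)$ to be large on $\{\psi<0\}$) is not forced. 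More fundamentally, no such integral identity can yield the uniform pointwise statements your spectral step consumes: that $f'(u_n)\to\infty$ a.e.\ on $\{\psi<0\}$ (the ``stiff barrier'') and that $f'(u_n)\equiv 0$ on $\{\psi>0\}$ (so that the linearization reduces to $-\Delta-a_n$ on $D=\{\psi>0\}$). Even granting a leading-order behaviour $u_n\approx t_n\psi$ with $t_n\to-\infty$, the corrections dominate near the nodal set, so $u_n\le M$ on all of $D$ is not available, and without it neither the localization of $\varphi_n,w_n$ in $\overline D$ nor the limit equations $-\Delta\varphi=(\lambda_2+\mu_1^\infty)\varphi$ and $-\Delta w=\lambda_2 w$ on $D$ can be asserted; the final orthogonality contradiction then never gets off the ground. (A further unaddressed point: a weak limit vanishing a.e.\ on $\{\psi<0\}$ lies in $H^1_0(D)$ only after some discussion of the regularity of the nodal set.) So the proposal has a genuine gap at its central step — one you yourself flag as ``the genuine difficulty.''

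It is instructive that the paper's proof never determines a pointwise profile at all. It works at the scale $\|u_n\|_{L^2(\Omega)}$: setting $v_n=u_n/\|u_n\|_{L^2(\Omega)}$ and $d_n=c_n/\|u_n\|_{L^2(\Omega)}$, it first proves the a priori bound $f(u_n)\le C(-c_n)$ (Claim~\ref{claim1}, from the maximum point of $u_n$ and superlinearity {\bf (iv)}), and derives $-c_n\le C\|u_n\|_{L^2(\Omega)}$ from the identity $\int(f'(u_n)u_n-f(u_n))w_n=c_n\int hw_n+\mu_n\int u_nw_n$ together with {\bf (b)$'$}. These two facts give that $f(u_n)/\|u_n\|_{L^2(\Omega)}$ is bounded in $L^\infty(\Omega)$, that the normalized limit $v$ satisfies $v\le 0$, and that $\sup_\Omega v_n\to 0$ — in particular the positive part of $u_n$ is $o(\|u_n\|_{L^2(\Omega)})$, which already shows your clean $-\psi$-shaped picture cannot be had cheaply. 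Then, instead of localizing the two eigenfunctions separately on nodal domains, the paper shows a fixed combination $\xi_n=\eta_1w_n+\eta_2\zeta_n$ converges strongly to $\psi$ (Claim~\ref{claim2}, via the two-dimensional-space argument of Lemma~\ref{oned}), and uses the boundedness of $\int f'(u_n)\xi_n^2$ together with $f'(u)\ge f(u)/u$ and $\sup_\Omega v_n\to 0$ to conclude $f_\infty=0$; hypothesis {\bf (c)} then gives $d=0$, so $v$ is a nonpositive multiple of $\psi$, hence $v=0$, contradicting $\|v\|_{L^2(\Omega)}=1$. If you try to make your stiff-barrier step rigorous, the tools you would need (the maximum-point bound on $f(u_n)$, the $\|u_n\|_{L^2(\Omega)}$ normalization, the convexity inequality $f'(u)\ge f(u)/u$, and the recovery of $\psi$ from the two eigenpairs) are exactly the ingredients of the paper's argument — at which point you would have reproduced it rather than shortcut it.
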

\begin{proof}
We argue by contradiction. Let $((a_n,u_n,c_n))$ be a sequence of degenerate solutions of (\ref{a}) with
Morse index equal to one,
$a_n<\lambda_2+\frac{1}{n}$ and $c_n\to-\infty$.  
We assume $c_n<0$ for all $n$.
As we saw in the proofs of Theorem~\ref{l1l2} and
Lemma~\ref{oned}, $a_n>\lambda_2$. So $a_n\to\lambda_2$.

Let $(\mu_n,w_n)$ be the first eigenpair associated with linearized problem for the solution $(a_n,u_n,c_n)$, with $w_n>0$ and
$\int w_n^2=\int\phi^2$. We have
\begin{equation}\label{f1}
\Delta w_n+a_nw_n-f'(u_n)w_n=-\mu_nw_n
\end{equation}
and $\lambda_1-a_n<\mu_n<0$.
Multiplying both sides of (\ref{f1}) by $w_n$ and integrating it follows that $(w_n)$ is bounded in $H^1_0(\Omega)$.
We may assume $w_n\weak w$ in $H^1_0(\Omega)$, $w_n\to w$ in $L^2(\Omega)$, and $w_n\to w$ a.e.\ in $\Omega$.
Clearly $w\geq 0$ and $\int w^2=\int\phi^2$.

Using (\ref{a}) and (\ref{f1}),
$$
0\leq\int(f'(u_n)u_n-f(u_n))w_n=c_n\int hw_n+\mu_n\int u_nw_n,
$$
or
$$
-c_n\int hw_n\leq -\mu_n\int u_n^-w_n\leq|\mu_n|\|u_n\|_{L^2(\Omega)}\|\phi\|_{L^2(\Omega)}.
$$
Now
$$
\int hw_n\to\int hw>0,
$$
as $h$ satisfies {\bf (b)$'$}. It follows there exists a constant $C>0$ such that
\begin{equation}\label{f2}
-\frac{c_n}{\|u_n\|_{L^2(\Omega)}}\leq C.
\end{equation}
Defining
$$
v_n=\frac{u_n}{\|u_n\|_{L^2(\Omega)}}\qquad{\rm and}\qquad d_n=\frac{c_n}{\|u_n\|_{L^2(\Omega)}},
$$
the new functions satisfy
\begin{equation}\label{f5}
\Delta v_n+a_nv_n-\frac{f(u_n)}{\|u_n\|_{L^2(\Omega)}}-d_nh=0.
\end{equation}
Inequality (\ref{f2}) shows we may assume $d_n\to d$.
\begin{Claim}\label{claim1}
There exists $C>0$ such that $f(u_n)\leq C(-c_n)$.
\end{Claim}
\begin{proof}[Proof of {\rm Claim~\ref{claim1}}]
Let $x_n$ be a point of maximum of $u_n$. As $(\Delta u_n)(x_n)\leq 0$
$$
a_nu_n(x_n)-f(u_n(x_n))-c_nh(x_n)\geq 0,
$$
or, as $u_n(x_n)>M\geq 0$
(because $a_n>\lambda_2$ and the solutions are degenerate),
\begin{equation}\label{f4}
f(u_n(x_n))\leq a_nu_n(x_n)-c_nh(x_n),
\end{equation}
\begin{equation}\label{f3}\textstyle
\frac{f(u_n(x_n))}{u_n(x_n)}\leq a_n-\frac{c_n}{u_n(x_n)}h(x_n).
\end{equation}
Suppose $u_n(x_n)\geq-c_n$ for large $n$. Then the right hand side of $(\ref{f3})$ is bounded.
Hypothesis {\bf (iv)} implies $u_n(x_n)$ is bounded and this contradicts $u_n(x_n)\geq-c_n$
as $c_n\to-\infty$. Therefore
$$
u_n(x_n)\leq-c_n\quad{\rm for\ large}\ n.
$$
Using (\ref{f4}), 
$$
f(u_n(x_n))\leq (a_n-h(x_n))(-c_n)\leq C(-c_n).
$$
As $f$ is increasing,
the claim is proved.
\end{proof}
We return to the proof of Lemma~\ref{ultimo}. Claim~\ref{claim1} and (\ref{f2}) together imply
\begin{equation}\label{f6}
\frac{f(u_n)}{\|u_n\|_{L^2(\Omega)}}\quad{\rm is\ uniformly\ bounded\ in}\ L^\infty(\Omega).
\end{equation}
We may assume
$$
\frac{f(u_n)}{\|u_n\|_{L^2(\Omega)}}\weak f_\infty\quad{\rm in}\ L^2(\Omega).
$$
Here $f_\infty\geq 0$.
Multiplying both sides of (\ref{f5}) by $v_n$ and integrating,
$(v_n)$ is bounded in $H^1_0(\Omega)$. We may assume
$v_n\weak v$ in $H^1_0(\Omega)$, $v_n\to v$ in $L^2(\Omega)$, and $v_n\to v$ a.e.\ in $\Omega$.
In fact, \cite[Lemma~9.17]{GT}
and (\ref{f5}) imply $v_n\to v$ in $C^{1,\alpha}(\bar{\Omega})$.
The function $v$ is a weak solution of 
\begin{equation}\label{f11}
\Delta v+\lambda_2v-f_\infty-dh=0.
\end{equation}

Suppose $v(x)>0$ for some $x\in\Omega$. Then, since from (\ref{f2}) the sequence $\bigl(\|u_n\|_{L^2(\Omega)}\bigr)$ is unbounded,
using {\bf (iv)},
$$
\frac{f(u_n(x))}{\|u_n\|_{L^2(\Omega)}}\ =\ \frac{f(\|u_n\|_{L^2(\Omega)}v_n(x))}{\|u_n\|_{L^2(\Omega)}v_n(x)}v_n(x)\ \longrightarrow
\ \infty\times v(x)\ =\ \infty.
$$
This contradicts (\ref{f6})
because on the set $\{x\in\Omega:v(x)>0\}$ the sequence $\Bigl(\frac{f(u_n)}{\|u_n\|_{L^2(\Omega)}}\Bigr)$
converges pointwise to $+\infty$. 
Therefore $v\leq 0$ and 
\begin{equation}\label{f30}
\sup_\Omega v_n\to 0, 
\end{equation}
as $v_n\to v$ in $C^{1,\alpha}(\bar{\Omega})$.

Let $(0,\zeta_n)$ be a second eigenpair associated with linearized problem for the solution $(a_n,u_n,c_n)$, with 
$\int \zeta_n^2=\int\psi^2$. We have
\begin{equation}\label{f20}
\Delta \zeta_n+a_n\zeta_n-f'(u_n)\zeta_n=0.
\end{equation}
\begin{Claim}\label{claim2}
There exists a linear combination of $w_n$ and $\zeta_n$ such that
$$
\xi_n:=\eta_1w_n+\eta_2\zeta_n
$$
converges strongly to $\psi$ in $H^1_0(\Omega)$.
\end{Claim}
\begin{proof}[Proof of {\rm Claim~\ref{claim2}}]
From (\ref{f1}),
$$
\int|\nabla w_n|^2\leq a_n\int w_n^2
$$
and
$$
\int|\nabla w|^2\leq \lambda_2\int w^2,
$$
where $w$ is as above. On the other hand (\ref{f20}) implies, modulo a subsequence,
$\zeta_n\weak \zeta$ in $H^1_0(\Omega)$, $\zeta_n\to \zeta$ in $L^2(\Omega)$, and $\zeta_n\to \zeta$ a.e.\ in $\Omega$ with
$$
\int|\nabla \zeta|^2\leq \lambda_2\int \zeta^2.
$$
For any linear combination $\hat{\xi}_n:=\kappa_1w_n+\kappa_2\zeta_n$ we have 
\begin{equation}\label{f9}
\Delta \hat{\xi}_n+a_n\hat{\xi}_n-f'(u_n)\hat{\xi}_n=-\mu_n\kappa_1w_n,
\end{equation}
and, as $\int w_n\zeta_n=0$, 
\begin{equation}\label{f8}
\int|\nabla \hat{\xi}_n|^2\leq a_n\int \hat{\xi}_n^2.
\end{equation}
Hence
$$
\int|\nabla (\kappa_1w+\kappa_2\zeta)|^2\leq \lambda_2\int (\kappa_1w+\kappa_2\zeta)^2.
$$
Since $\int w\zeta=0$, $w$ and $\zeta$ span a two dimensional space ${\cal E}$.
By arguing as in the proof of Lemma~\ref{oned}, there exist
$\eta_1$ and $\eta_2$ such that
$$
\psi=\eta_1w+\eta_2\zeta.
$$
We have
$$
\eta_1w_n+\eta_2\zeta_n\ \weak\ \eta_1w+\eta_2\zeta\ =\ \psi\quad{\rm in}\ H^1_0(\Omega).
$$
Taking $\kappa_1=\eta_1$ and $\kappa_2=\eta_2$ in $\hat{\xi}_n$, passing to the limit in (\ref{f8}),
and using the lower semi-continuity of the norm,
$$
\int|\nabla\psi|^2\leq\liminf\int|\nabla(\eta_1w_n+\eta_2\zeta_n)|^2\leq\lambda_2\int\psi^2.
$$
However, $\lambda_2\int\psi^2=\int|\nabla\psi|^2$ so
$$
\eta_1w_n+\eta_2\zeta_n\ \to\ \psi\quad{\rm in}\ H^1_0(\Omega).
$$
The claim is proved.
\end{proof}
We return to the proof of Lemma~\ref{ultimo}. Using (\ref{f9}),
\begin{equation}\label{f10}
\int|\nabla\xi_n|^2-a_n\int\xi_n^2+\int f'(u_n)\xi_n^2=\mu_n\eta_1^2\int w_n^2.
\end{equation}
All terms in (\ref{f10}) remain bounded as $n\to\infty$, except perhaps $\int f'(u_n)\xi_n^2$,
which must also therefore remain bounded. 
Recall $\sup_\Omega u_n>M\geq 0$.
Using $f'(u)\geq\frac{f(u)}{u}$, we may estimate this term from below as follows
\begin{eqnarray*}
\int f'(u_n)\xi_n^2&\geq&\frac{1}{\sup_\Omega v_n}\int\frac{f(u_n)}{\|u_n\|_{L^2(\Omega)}}\xi_n^2\\
&=&\frac{1}{\sup_\Omega v_n}\left[\int\frac{f(u_n)}{\|u_n\|_{L^2(\Omega)}}\psi^2+
\int\frac{f(u_n)}{\|u_n\|_{L^2(\Omega)}}(\xi_n^2-\psi^2)\right]\\
&\to&\frac{1}{0^+}\left(\int f_\infty\psi^2+0\right).
\end{eqnarray*}
We have used (\ref{f30}). This proves $f_\infty=0$ a.e.\ in $\Omega$, because by the unique continuation principle $\psi$
only vanishes on a set of measure zero.

Returning to (\ref{f11}), 
$$
\Delta v+\lambda_2v-dh=0.
$$
Hypothesis {\bf (c)} implies $d=0$. Consequently, $v$ must be a multiple of $\psi$.
But $v\leq 0$ so $v=0$, contradicting $\|v\|_{L^2(\Omega)}=1$.
We have finished the proof of Lemma~\ref{ultimo}.
\end{proof}
Our last step is the 
\begin{proof}[Proof of {\rm Theorem~\ref{thm}}]
Take $L=-c_0$, with $c_0$ as in Lemma~\ref{ultimo}, let $\hat{\delta}_{-c_0}$
be as in Lemma~\ref{L} and $\delta_{-c_0}$ be as in (\ref{dl}). 
Finally, let
$$
\delta=\min\{\delta_0,\delta_{-c_0}\}.
$$
Here $\delta_0$ is as in Lemma~\ref{ultimo}.
Degenerate solutions $(a,u,c)$ of (\ref{a}) with Morse index equal to one and $a<\lambda_2+\delta$ 
lie in the set $\check{\cal V}$ of Lemma~\ref{V}. This is a consequence of Lemmas~\ref{L}
and \ref{ultimo}. Using Remark~\ref{unico}, we may use the parameter $c$ to follow the branch of
solutions in Proposition~\ref{prop} as $c\to-\infty$.
\end{proof}
We finish with the simplest bifurcation curves one can obtain for $\lambda_2<a<\lambda_2+\delta$.
These are illustrated in Figure~\ref{fig8}.

\begin{figure}
\centering
\begin{psfrags}
\psfrag{c}{{\tiny $c$}}
\psfrag{u}{{\tiny $u$}}
\psfrag{M}{{\tiny $M\psi$}}
\psfrag{N}{{\tiny $\!\!\!\!\!\!-\frac{M}{\beta}\psi$}}
\psfrag{d}{{\tiny $\cc(\lambda_2)$}}
\psfrag{a}{{\tiny $u(a)$}}
\includegraphics[scale=.6]{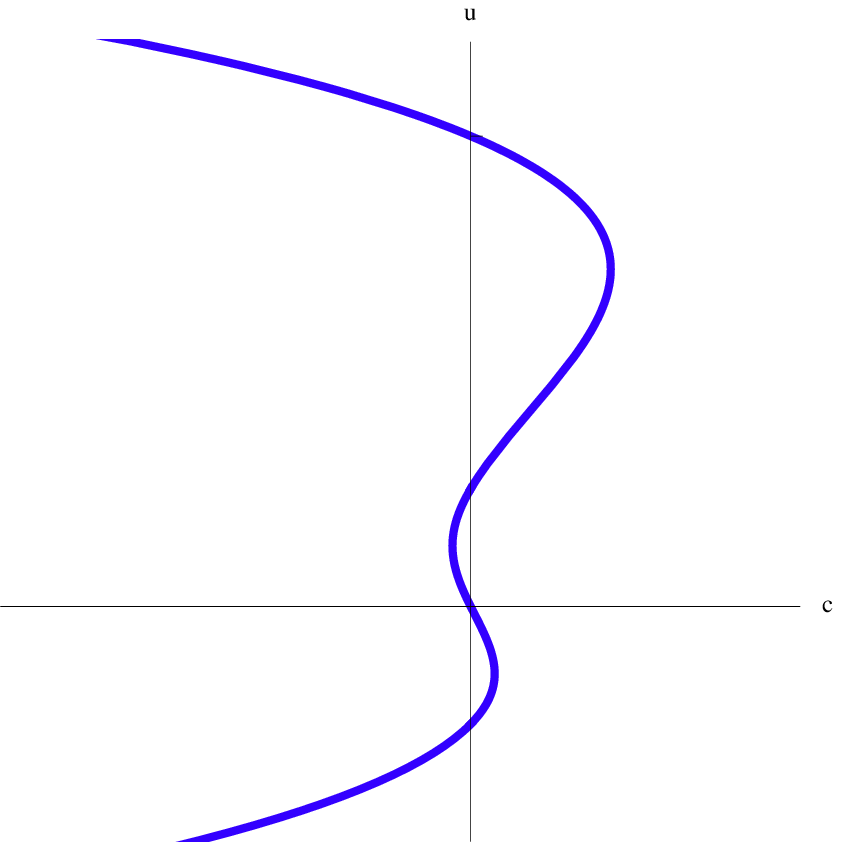}\ \ \ \ \ \ \ \ \ \ \ \ 
\includegraphics[scale=.41]{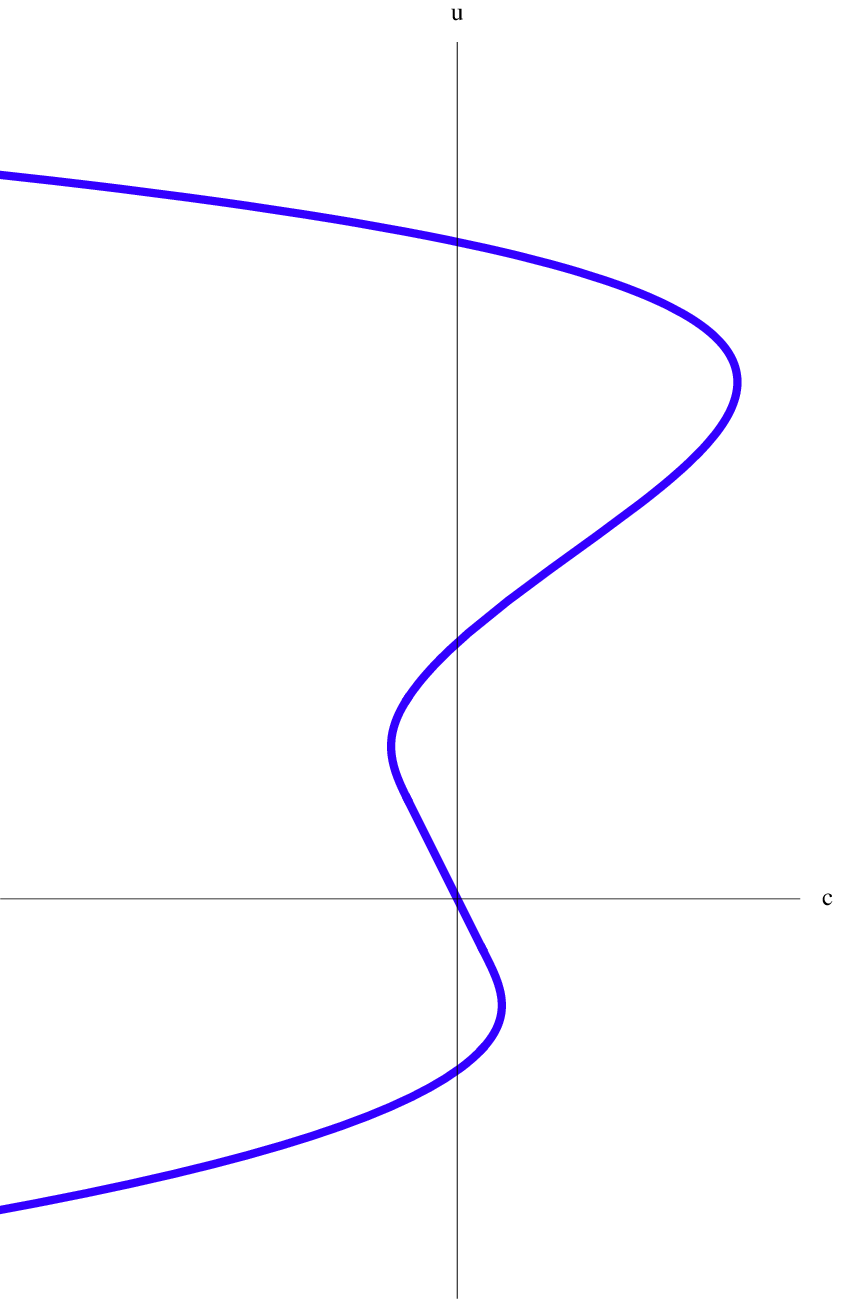}
\end{psfrags}
\caption{Bifurcation curve for $\lambda_2<a<\lambda_2+\delta$. On the left $M=0$ and on the right $M>0$.}\label{fig8}
\end{figure}

\section{Appendix}\label{appendix}

{\bf This appendix is not to appear in the final version of the paper.
It contains some proofs omitted in the text.}
\begin{proof}[Proof of {\rm Lemma~\ref{tp}}]
Consider the function $g:\R^2\times\rr\to L^p(\Omega)$,
defined by
$$
g(a,t,y)=\Delta\tpy+a\tpy-f\tpy.
$$
Let $t_0\in\,]-\infty,0[\,\cup\,]0,M]$. We know $g(\lambda_1,t_0,0)=0$.
We use the Implicit Function Theorem
to show that we may write the solutions of $g(a,t,y)=0$, in a neighborhood
of $(\lambda_1,t_0,0)$, in the form $(\A_\dagger(t),t,\y_\dagger(t))$. Let $(\alpha,z)\in\R\times\rr$.
The derivative $Dg_{(a,y)}(\alpha,z)$ at $(\lambda_1,t_0,0)$ is
\begin{eqnarray*}
g_a\alpha+g_yz 
&=&\alpha t_0\phi+\Delta z+\lambda_1z.
\end{eqnarray*}
We check that the derivative is injective. Suppose $\alpha t_0\phi+\Delta z+\lambda_1z=0$.
Multiplying both sides of this equation by $\phi$ and integrating we get $\alpha=0$.
Thus $\Delta z+\lambda_1z=0$. Since $z\in\rr$ we also get $z=0$.
This proves injectivity. It is easy to check that the derivative is also surjective.
So the derivative is a homeomorphism from $\R\times\rr$ to $L^p(\Omega)$.
\end{proof}

\begin{proof}[Proof of {\rm Theorem~\ref{big}}]
Consider the function $\tH:\h\times\R\times S\times\R\to L^p(\Omega)\times L^p(\Omega)$ ($S$ given in (\ref{ss})),
$\tH$ defined by
$$
\tH(u,c,w,\mu)=(\Delta u+au-f(u)-ch,\Delta w+aw-f'(u)w+\mu w).
$$
We may use the Implicit Function Theorem to describe the solutions of $\tH=0$
in a neighborhood of a stable solution $(u,c,w,\mu)$. 
Here $\mu$ is the first eigenvalue of the associated linearized problem and $w$ is
the corresponding positive eigenfunction on $S$.
Indeed,
\begin{eqnarray*}
\tH_uv+\tH_w\omega+\tH_\mu\nu&=&(\Delta v+av-f'(u)v,\\ &&\ \Delta \omega+a\omega-f'(u)\omega+\mu\omega-f''(u)vw+\nu w).
\end{eqnarray*}
Consider the system obtained by setting the previous derivative equal to 
$(0,0)$.
From the first equation we get $v=0$ because $\mu>0$.
Then, multiplying the second equation by $w$ and integrating by parts, we get $\nu=0$.
Thus $\omega$ is a multiple of $w$. Since $\omega$ is orthogonal to $w$, $\omega=0$.
The derivative is a homeomorphism from $\h\times\rr_w\times\R$ to $L^p(\Omega)\times L^p(\Omega)$.
So, for each fixed $a$, the solutions of $\tH=0$ in a neighborhood of a stable solution $(u,c,w,\mu)$ lie
on a $C^1$ curve parametrized by $c\mapsto(\uu^*(c),c,\w^*(c),\mu^*(c))$.
We differentiate both sides of the equations $\tH(\uu^*(c),c,\w^*(c),\mu^*(c))=(0,0)$
with respect to $c$. 
We obtain
\begin{equation}\label{sis}
\left\{\begin{array}{l}
\Delta v+av-f'(u)v=h,\\
\Delta\omega+a\omega-f'(u)\omega+\mu\omega-f''(u)vw=-\nu w,
\end{array}\right.
\end{equation}
where $v=(\uu^*)'(c)$, $\omega=(\w^*)'(c)$ and $\nu=(\mu^*)'(c)$.
When $\mu>0$ the first equation and the maximum principle give 
$$ 
(\uu^*)'(c)<0.
$$ 
From the second equation
we get 
$$ 
(\mu^*)'(c)=\frac{\int f''(u)vw^2}{\int\phi^2}<0.
$$  
By Remark~\ref{unico}, we may follow the solution $\uu^*(c)$ until it becomes degenerate. 
The solution $\uu^*(c)$ will have to become degenerate for some value of $c$.
Indeed, from (\ref{a}) we obtain
\begin{equation}\label{upc_bis}
 c\int h\phi=(a-\lambda_1)\int u\phi-\int f(u)\phi\leq (a-\lambda_1)\int u^+\phi,
\end{equation}
showing $c$ is bounded above. The solutions $\uu^*(c)$ cannot be continued for all positive values of $c$.
There must exist $c_*$ such that $\lim_{c\nearrow c_*}\mu^*(c)=0$. Clearly, the solutions $\uu^*(c)$ will converge
to a solution $u_*$ as $c\nearrow c_*$. By the uniqueness assertion in Theorem~\ref{thmd},
this $(c_*,u_*)$ belongs to $\D_*$. In particular $c_*>0$.

We have shown any branch of stable solutions can be extended for $c\in\,]-\infty,c_*[$.
But by Lemma~\ref{unique} there is a unique stable solution of (\ref{a}) for $c=0$,
namely $u_\dagger=u_\dagger(a)$. This proves uniqueness.
\end{proof}

\begin{proof}[Proof of {\rm Lemma~\ref{turn}}]
This lemma is known (\cite[Theorem~3.2]{CR2} and \cite[p.~3613]{OSS1}), but for completeness we give the proof.
Let $(c_*,u_*)$ be a degenerate solution with Morse index equal to zero.
Let 
$t_*$ and $y_*$ be such that
$u_*=t_*w_*+y_*$, with $w_*\in S$ satisfying $\Delta w_*+aw_*-f'(u_*)w_*=0$, $w_*>0$, and $y_*\in\rr_{w_*}$.
We let $\tG:\R\times\rr_{w_*}\times\R\times S\times\R\to L^p(\Omega)\times L^p(\Omega)$ be defined by
\begin{eqnarray*}
\tG(t,y,c,w,\mu)&=&(\Delta\twzy+a\twzy-f\twzy-ch,\\ &&\ \Delta w+aw-f'\twzy w+\mu w).
\end{eqnarray*}
We have $\tG(t_*,y_*,c_*,w_*,0)=0$. We may use the Implicit Function Theorem to describe the solutions of $\tG=0$
in a neighborhood of $(t_*,y_*,c_*,w_*,0)$. Indeed, at this point,
\begin{eqnarray*}
\tG_yz+\tG_c\gamma+\tG_w\omega+\tG_\mu\nu 
&=&(\Delta z+az-f'\twyz z-\gamma h,\\
&&\ \Delta\omega+a\omega-f'\twyz\omega\\ &&\ \ -f''\twyz zw_*+\nu w_*).
\end{eqnarray*}
If this derivative vanishes, then we get $\gamma=0$ and then $z=0$.
This implies $\nu=0$ and then $\omega=0$.
The derivative is a homeomorphism from $\rr_{w_*}\times\R\times\rr_{w_*}\times\R$ to $L^p(\Omega)\times L^p(\Omega)$.
So the solutions of $\tG=0$ in a neighborhood of $(t_*,y_*,c_*,w_*,0)$ lie
on a curve $t\mapsto(t,\y(t),\cc(t),\w(t),\mu(t))$. Differentiating $\tG(t,\y(t),\cc(t),\w(t),\mu(t))=(0,0)$
once with respect to $t$,
$$
\Delta z+az-f'\twyz z-\gamma h=-\left(\Delta w_*+aw_*-f'\twyz w_*\right)=0,
$$
$$
\Delta\omega+a\omega-f'\twyz\omega-f''\twyz zw_*+\nu w_*=f''\twyz w_*^2
$$
Here $z=\y'(t_*)$, $\gamma=\cc'(t_*)$, $\omega=\w'(t_*)$ and $\nu=\mu'(t_*)$. Clearly, both $\gamma$ and $z$ vanish.
This implies 
$$
\nu=\frac{\int f''\twyz w_*^3}{\int w_*^2}.
$$
It is impossible for $\max_\Omega\twyz\leq M$ because otherwise $\Delta w_*+aw_*=0$ with $w_*>0$.
Hence,
\begin{equation}\label{nu}
\mu'(t_*)>0.
\end{equation}
Differentiating the first equation in $\tG(t,\y(t),\cc(t),\w(t),\mu(t))=(0,0)$
twice with respect to $t$, at $t_*$,
\begin{eqnarray*}
\Delta z'+az'-f'\twyz z'-\gamma' h-f''\twyz z(w_*+z)=\qquad\\ 
\qquad\qquad\qquad\qquad\qquad\qquad f''\twyz w_*(w_*+z).
\end{eqnarray*}
Here $z=\y'$.
This can be rewritten as 
\begin{eqnarray*}
\Delta z'+az'-f'\twyz z'-\gamma' h&=&f''\twyz (w_*+z)^2\\ &=&f''\twyz w_*^2,
\end{eqnarray*}
as $z(t_*)=0$. Multiplying by $w_*$ and integrating,
$$
\cc''(t_*)=-\,\frac{\int f''\twyz w_*^3}{\int hw_*}.
$$
This is formula (2.7) of \cite{OSS1}.
So $\cc''(t_*)$ is negative. We recall from equation (\ref{signc}), a degenerate solution
with $a>\lambda_1$ has $c_*>0$. As $t$ increases from $t_*$,
$\cc(t)$ decreases and the solution becomes stable. 
So the ``end" of ${\cal M}^*$ coincides with the piece of curve parametrized by
$t\mapsto(c(t),t\w(t)+\y(t))$, for $t$ in a right neighborhood of $t_*$.
A parametrization of ${\bm m}^\sharp$ is obtained by taking $t$ in a left neighborhood of $t_*$.
\end{proof}

\begin{proof}[Proof of {\rm Lemma~\ref{p3}}]
 We may apply the Implicit Function Theorem as above as $(c,u)=(0,0)$ is a nondegenerate solution
of (\ref{a}) when $a$ is not an eigenvalue of the Laplacian. 
This gives the curve $\breve{\C}$ of solutions parametrized by $c\mapsto(c,\breve{u}(c))$
for $c\in\,]-\breve{c},\breve{c}[$.
To obtain more information about the behavior 
if the solutions for $a$ close to $\lambda_1$, we look at the linearized equation at $u=0$,
$$
\Delta v+av=h,
$$
where $v=\breve{u}'(0)$. As in \cite[Theorem~2, formula~(5)]{CP}, we write $v=t\phi+y$ with $y\in\rr$.
This leads to
\begin{equation}\label{l11}
v=\textstyle\frac{\int h\phi}{a-\lambda_1}\phi+(\Delta+a)^{-1}\left[h-\left(\int h\phi\right)\phi\right].
\end{equation}
Note that as $a\searrow\lambda_1$ the second term of the sum on the right hand side
approaches $(\Delta+\lambda_1)^{-1}\left[h-\left(\int h\phi\right)\phi\right]$.
We may choose a right neighborhood of $\lambda_1$ small enough so that $v>0$.
From Taylor's formula,
\begin{equation}\label{l22}
\breve{u}(c)=0+cv+o(c)\quad {\rm as}\ c\to 0. 
\end{equation}
Reducing the right neighborhood of $\lambda_1$ if necessary, we may assume $c_1<c_2$ implies $\breve{u}(c_1)<\breve{u}(c_2)$.
In particular $\breve{u}(c)$ is positive for small $c$.
Note $\breve{c}\searrow 0$ as $a\searrow\lambda_1$.
\end{proof}
\end{document}